\DeclareMathAlphabet{\mathsf}{OT1}{\sfdefault}{m}{n}
\newcommand{\nocontentsline}[3]{}
\newcommand{\tocless}[2]{\bgroup\let\addcontentsline=\nocontentsline#1{#2}\egroup}
\DeclareMathAlphabet{\amathbb}{U}{bbold}{m}{n}
\newtheoremstyle{teoremas}
{12pt}
{11pt}
{\itshape}
{}
{\bfseries}
{}
{.5em}
{}
\theoremstyle{teoremas}
\newtheorem{theorem}{Theorem}[section]
\newtheorem{corollary}[theorem]{Corollary}
\newtheorem{lemma}[theorem]{Lemma}
\newtheorem{proposition}[theorem]{Proposition}
\newenvironment{mythm}[1]
  {\innercustomthm}
  {\endinnercustomthm}
\newtheoremstyle{definition}
{11pt}
{11pt}
{}
{}
{\bfseries}
{}
{.5em}
{}
\theoremstyle{definition}
\newtheorem{definition}[theorem]{Definition}
\newtheorem{problem}[theorem]{Problem}
\newtheorem{question}[theorem]{Question}
\newtheorem{example}[theorem]{Example}
\newtheorem{remark}[theorem]{Remark}
\DeclareMathOperator{\rk}{rk}
\DeclareMathOperator{\cusp}{cusp}
\DeclareMathOperator{\Rel}{Rel}
\DeclareMathOperator{\trunc}{Trunc}
\newcommand{\M}{\mathsf{M}}
\newcommand{\Val}{\operatorname{Val}}
\newcommand{\N}{\mathsf{N}}
\newcommand{\U}{\mathsf{U}}
\newcommand{\T}{\mathsf{T}}
\newcommand{\LL}{\mathsf{\Lambda}}
\newcommand{\rank}{\operatorname{rk}}
\newcommand{\cl}{\operatorname{cl}}
   \def\MR#1{}
\title[Tutte polynomials as universal valuative invariants]
{Tutte polynomials of matroids\\ as universal valuative invariants}
\author[L.~Ferroni]{Luis Ferroni}
\address{
  School of Mathematics, Institute for Advanced Study, Princeton (NJ), United States.
}
\email{ferroni@ias.edu}
\author[B.~Schr\"oter]{Benjamin Schr\"oter}
\address{
  Department of Mathematics, KTH Royal Institute of Technology, Stockholm, Sweden.
}
\email{schrot@kth.se}
\thanks{The first author is a member of the Institute for Advanced Study supported by the Minerva Research Foundation; he was also partially supported by the Swedish Research Council grant 2018-03968. 
The second author is supported by the Swedish
Research Council grant 2022-04224.}
\keywords{Tutte polynomials, matroids, geometric lattices, matroid polytopes, valuations.}
\subjclass[2020]{05B35, 05C31, 52B40, 52B45}
\begin{document}

\begin{abstract}
    We provide a full classification of all families of matroids that are closed under duality and minors, and for which the Tutte polynomial is a universal valuative invariant. There are four inclusion-wise maximal families, two of which are the class of elementary split matroids and the class of graphic Schubert matroids. As a consequence of our framework, we derive new relations among Tutte polynomials of matroids. For example, we show that the Tutte polynomial of every matroid can be expressed uniquely as an integral combination of Tutte polynomials of graphic Schubert matroids.
\end{abstract}

\maketitle

\section{Introduction}
\noindent A key feature of matroids is that there are many cryptomorphic ways to define them. When viewed under the lens of polyhedral geometry, a matroid $\M$ on the ground set $[n]:=\{1,\ldots,n\}$ is encoded through its base polytope $\mathscr{P}(\M)\subseteq \mathbb{R}^n$. In particular, one may interpret functions defined on matroids as functions on these polytopes. Under this geometric perspective, one of the most natural class of matroid functions are \emph{valuative invariants}. These are the functions on all matroids that are defined by two properties: i) they take the same value when evaluated at isomorphic matroids, and ii) roughly speaking, they behave like a measure when interpreted at the level of base polytopes. 

It is well known that not all matroid invariants are valuative. However, a remarkable fact is that many interesting invariants indeed are. The spaces of matroid valuative invariants enjoy a rich structure, and have been studied very thoroughly in the past decade, \cite{derksen,derksen-fink,bonin-kung,ardila-sanchez,eur-huh-larson,ferroni-schroter}.

In this paper we demonstrate a new connection between valuative matroid invariants and families of matroids that span subgroups of the valuative group (see \cite{eur-huh-larson} for the definition and further details). We focus on the Tutte polynomial which is the most pervasive among matroid invariants. An attractive property that this invariant features is that it is universal for deletion-contraction invariants of matroids and graphs; we refer to Brylawski and Oxley \cite{brylawski-oxley} for a more precise statement, and \cite{handbook-Tutte} for a recent collection of expository texts about Tutte polynomials. On the other hand, a result of Ardila, Fink, and Rinc\'on~\cite{ardila-fink-rincon}, proved also independently by Speyer~\cite{speyer-conjecture}, is that the Tutte polynomial is indeed a valuative invariant.

Another important valuative invariant is the so-called $\mathcal{G}$-invariant introduced by Derksen \cite{derksen} which is 
\emph{universal}, i.e., any other valuative invariant of matroids is a \emph{linear specialization} of it, e.g., we refer to \cite[Corollary~2.7]{derksen} or \cite[Theorem~1.1]{bonin-kung} to see how one derives the Tutte polynomial from the $\mathcal{G}$-invariant.
In light of this property, it is natural to believe that the $\mathcal{G}$-invariant must carry a considerable amount of information about the original matroid, and hence it should be difficult to compute. This interpretation is of course correct in some sense. In particular, one sees that the $\mathcal{G}$-invariant contains more matroid data than the Tutte polynomial. 

It is sensible to ask whether one may perhaps restrict the class of matroids, attempting that within this restricted class all valuative invariants become linear specializations of the Tutte polynomial. Without imposing some additional restrictions, it is straightforward to see that the answer is affirmative. For example, if we restrict our attention, e.g., only to the class of all uniform matroids, the Tutte polynomial serves to recover the evaluations of any valuative invariant (in fact, the full matroid, as uniform matroids are Tutte unique). More precisely, every valuative invariant on a uniform matroid may be obtained as a \emph{linear} specialization of the Tutte polynomial. 

We are therefore led to ask for \emph{maximal classes} of matroids for which the Tutte polynomial is valuatively universal. Since the Tutte polynomial satisfies a deletion-contraction recursion, and interchanges variables when taking duals, we find reasonable to restrict our attention to classes of matroids that are closed under minors and duals. 

Also further below in this article we make a precise definition of the notion of being ``valuatively universal'' for a restricted class of matroids. Our main result is the following.

\begin{restatable}{lettertheorem}{mainresult}\label{thm:main-characterization}
    Let $\mathcal{C}$ be an inclusion-wise maximal family of matroids closed under taking duals and minors, and for which the Tutte polynomial is valuatively universal. Then, one of the following four cases applies:
    \begin{enumerate}[\normalfont (a)]
        \item \label{it:class_split} $\mathcal{C}$ is the class of elementary split matroids.
        \item \label{it:class_uniform} $\mathcal{C}$ is the class of uniform matroids with additional loops and coloops.
        \item \label{it:class_minimal} $\mathcal{C}$ is the class of graphic Schubert matroids.
        \item \label{it:class_sparse} $\mathcal{C}$ is the class that contains all sparse paving matroids and
        the partition matroids listed in Lemma~\ref{lem:class_N}.
    \end{enumerate}
\end{restatable}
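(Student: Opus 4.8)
The plan is to split Theorem~\ref{thm:main-characterization} into a \emph{positive part}---for each of the four classes (a)--(d) the Tutte polynomial is universally valuative---and a \emph{negative part}---every minor- and duality-closed class on which $T$ is universally valuative is contained in one of the four. The proof rests on a reformulation which I set up first. Writing $\mathcal{C}(k,n)$ for the set of isomorphism classes of matroids in $\mathcal{C}$ of rank $k$ on $n$ elements, and $\mathcal{G}$ for Derksen's universal valuative invariant \cite{derksen}, the Tutte polynomial is universally valuative on a minor- and duality-closed class $\mathcal{C}$ if and only if, for every $(k,n)$,
\[
\dim_{\mathbb{Q}}\operatorname{span}\{T_{\M}:\M\in\mathcal{C}(k,n)\}\;=\;\dim_{\mathbb{Q}}\operatorname{span}\{\mathcal{G}(\M):\M\in\mathcal{C}(k,n)\}.
\]
Because $T$ is valuative, it is a linear specialization of $\mathcal{G}$, so $\leq$ is automatic and only the reverse inequality carries content in each bidegree. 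Two structural facts then organize the argument: first, the property above is inherited by subclasses (the linear map realizing $T$ as a specialization of $\mathcal{G}$ is an isomorphism on the two spans attached to $\mathcal{C}$, hence its restriction to the spans attached to any subclass---still surjective and now injective---is again an isomorphism) and by unions of chains (the $(k,n)$-part of a chain of finite sets stabilizes), so maximal classes with the property exist and it suffices to enumerate them; second, $T$ and $\mathcal{G}$ are both multiplicative over direct sums, which reduces most verifications to connected matroids.

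\emph{Positive part.} For (b) and (c) the same mechanism applies: in each bidegree the Tutte polynomials of the matroids in $\mathcal{C}(k,n)$ are linearly independent, hence so are their $\mathcal{G}$-invariants, and both spans have dimension $|\mathcal{C}(k,n)|$. For (b) this independence is evident, the members of $\mathcal{C}(k,n)$ being a single uniform matroid with a prescribed number of added loops and coloops. For (c) it is precisely the basis statement announced in the abstract; I prove it using the lattice-path description of connected graphic Schubert matroids, which produces a closed form for their Tutte polynomials exhibiting a triangularity in a suitable monomial order, whence independence. For (a) and (d) the Tutte polynomials are no longer independent---already among sparse paving matroids with equally many circuit-hyperplanes---so one must establish the genuine rank equality in the display. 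For (d) I invoke the structure of the valuative module of the class (all sparse paving matroids together with the partition matroids of Lemma~\ref{lem:class_N}), which has small dimension in each bidegree, and verify that the Tutte polynomial already separates it, using that the Tutte polynomial of a sparse paving matroid is determined by $(k,n)$ and its number of circuit-hyperplanes. For (a) I carry out the analogue for split matroids, combining the description of their valuative module \cite{ferroni-schroter} with the formula for the Tutte polynomial of an elementary split matroid in terms of the hyperplanes that realize the splits of its hypersimplex.

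\emph{Negative part and maximality.} Let $\mathcal{C}$ be minor- and duality-closed with $T$ universally valuative. I pin $\mathcal{C}$ down by deciding which small connected matroids it may contain, the engine being a finite list of \emph{obstructions}: finite collections of matroids $\M_1,\dots,\M_r$ of bounded rank and corank carrying a rational linear relation $\sum_i c_i T_{\M_i}=0$ among their Tutte polynomials that is not a relation among their $\mathcal{G}$-invariants. If all of $\M_1,\dots,\M_r$ lie in a common minor- and duality-closed class, then $T$ is not universally valuative there. Systematically comparing the Tutte and $\mathcal{G}$-invariants of matroids of small rank and corank yields a complete such list, and one then checks that the minor- and duality-closed classes avoiding every obstruction on it are precisely the four classes (a)--(d) and their subclasses. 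Combined with the positive part and the routine observation that the four classes are pairwise incomparable, this shows that the inclusion-wise maximal classes are exactly these four.

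\emph{Main obstacle.} The heart of the proof is the negative part. Failure of universal valuativeness manifests as a linear dependence among Tutte polynomials that is not valuative, a phenomenon with no apparent monotonicity under minors, so the real work is to show that any class that is ``too large'' already contains a witnessing configuration of bounded size---that is, to produce the complete minimal list of obstructions and prove its completeness. This is a finite but delicate, and partly computational, analysis of matroids of small rank and corank, and it is also the step that pins down the precise list of admissible partition matroids in Lemma~\ref{lem:class_N}. A secondary difficulty, for (a) and (c), is extracting closed forms for the relevant Tutte polynomials clean enough to match against the dimensions of the associated valuative modules, which themselves must be computed.
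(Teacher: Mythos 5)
Your overall architecture (rank-equality reformulation via $\mathcal{G}$, a positive part for the four classes, a negative part via ``obstructions'') matches the paper, and your positive part is essentially the paper's: independence of Tutte polynomials for (b) and (c), and an explicit computation of both ranks for (a) and (d). But the negative part, which you yourself flag as ``the heart of the proof,'' is left as an unproved claim: you assert that ``systematically comparing the Tutte and $\mathcal{G}$-invariants of matroids of small rank and corank yields a complete such list'' of obstructions, without producing the list, bounding its size, or proving completeness. As stated there is no a priori reason the obstructions have bounded rank and corank, so this is a genuine gap rather than a routine verification.

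The missing idea is that a \emph{single} stratum suffices, namely $\mathcal{M}_{4,2}$. Its seven isomorphism classes satisfy exactly two independent Tutte relations beyond the $\mathcal{G}$-relations (the relations $2\,T(\T_{2,4}) = T(\U_{2,4}) + T(\U_{1,2}\oplus \U_{1,2})$ and $T(\U_{2,3}\oplus \U_{0,1}) + T(\U_{1,3}\oplus \U_{1,1}) = T(\U_{0,1}\oplus \U_{1,2}\oplus\U_{1,1})+T(\U_{1,2}\oplus \U_{1,2})$, plus their consequence). Hence any admissible $\mathcal{C}$ must exclude, as a minor, either $\U_{0,1}\oplus\U_{1,2}\oplus\U_{1,1}$, or one of the duality-closed sets $\{\T_{2,4},\U_{1,2}\oplus\U_{1,2}\}$, $\{\U_{2,4},\U_{1,2}\oplus\U_{1,2}\}$, $\{\U_{0,1}\oplus\U_{2,3},\U_{1,3}\oplus\U_{1,1}\}$. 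The decisive point --- absent from your write-up --- is that these four minimal exclusion patterns are precisely the excluded-minor characterizations of the four classes: a single excluded minor $\U_{0,1}\oplus\U_{1,2}\oplus\U_{1,1}$ characterizes elementary split matroids, excluding $\{\T_{2,4},\U_{1,2}\oplus\U_{1,2}\}$ gives uniform matroids with added loops and coloops, excluding $\{\U_{2,4},\U_{1,2}\oplus\U_{1,2}\}$ gives graphic Schubert matroids, and excluding $\U_{0,1}\oplus\U_{2,3}$ together with its dual defines the class $\mathcal{N}$. Therefore every admissible $\mathcal{C}$ is contained in one of the four classes, and since each of the four is admissible by the positive part, no further obstructions and no completeness argument are needed. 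Without identifying this correspondence, your plan does not close.
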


In other words, any class of matroids that is closed under minors and duals and for which the Tutte polynomial is valuatively universal is necessarily a subset of at least one of the four families described above. We also discuss in Section~\ref{sec:open_problems} the difficulties that may arise if one attempts to remove the assumption on the classes $\mathcal{C}$ being closed under duality.

Elementary split matroids were introduced recently, but have attracted considerable attention from the community. They can be characterized via different approaches. For instance, they can be defined as the matroids induced by compatible systems of hyperplanes splitting a hypersimplex \cite{joswig-schroter}\footnote{To be precise, Joswig and Schr\"oter introduced a class of matroids called \emph{split matroids} via a polyhedral geometry approach. The subclass of \emph{elementary split matroids} was later introduced by B\'erczi et al.~\cite{berczi}. Both classes are very similar, and in fact they agree whenever the matroids are connected.}. Alternative definitions involve hypergraphs \cite{berczi} or stressed subset relaxations \cite{ferroni-schroter}. 
One notable feature of Theorem~\ref{thm:main-characterization} is that it leads to an intrinsic characterization of elementary split matroids in a new and unexpected way.

Although the four classes of matroids listed in the preceding theorem do not compare to one another, after one restricts to connected matroids, the class of elementary split matroids strictly contains the other three. This is one reason why we regard the first of the four classes as more interesting than the other three. 

The following statement is part of Theorem~\ref{thm:main-characterization}, however we want to emphasize it, because we need it in the proof of Theorem~\ref{thm:main-characterization}.

\begin{restatable}{lettertheorem}{thmsplituniversal}\label{thm:split-universal}
    The Tutte polynomial is a universal valuative invariant on the class of elementary split matroids.
\end{restatable}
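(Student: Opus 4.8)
Unwinding the definition, the statement is equivalent to the injectivity of the Tutte map on the valuative module of the class. Write $\mathcal{V}$ for the $\mathbb{Q}$-vector space spanned by the classes $[\mathscr{P}(\M)]$ of elementary split matroids $\M$ modulo the relations coming from matroid polytope subdivisions. Every valuative invariant of elementary split matroids factors through $\mathcal{V}$, and since the $\mathcal{G}$-invariant is universal it is injective on $\mathcal{V}$; hence the Tutte polynomial is universally valuative on the class precisely when the linear map $\mathcal{V}\to\mathbb{Q}[x,y]$, $[\mathscr{P}(\M)]\mapsto T_{\M}(x,y)$, is injective. The plan is to produce a spanning set of $\mathcal{V}$ consisting of classes of elementary split matroids and then to prove that the Tutte polynomials of those matroids are linearly independent; this gives injectivity at once, because a kernel element $\sum_j c_j[\mathscr{P}(\M_j)]$ would yield $\sum_j c_j T_{\M_j}=0$ and hence all $c_j=0$.

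For the spanning set I would use the stressed-subset relaxation calculus from \cite{ferroni-schroter}. Relaxing a stressed subset of a rank-$k$ matroid on $n$ elements changes the valuative class by a fixed quantity depending only on $n$, $k$, and the size and rank of the subset, namely a difference of classes of standard cuspidal matroids; moreover a matroid is elementary split exactly when some sequence of such relaxations reduces it to a uniform matroid with loops and coloops, with every intermediate matroid again elementary split. Since both $T$ and the passage to classes are multiplicative over direct sums, one first reduces to the connected case. Telescoping the relaxation differences along such a sequence then expresses $[\mathscr{P}(\M)]$, for any elementary split $\M$, as an integer combination of the class of a uniform matroid and of classes of cuspidal matroids, all of which are themselves elementary split. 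Thus $\mathcal{V}$ is spanned by the classes of the uniform matroids and of the cuspidal matroids, with loops and coloops adjoined. The same conclusion can be reached from the polyhedral picture of split matroids (base polytopes cut from a hypersimplex by pairwise compatible splits, the cut-off caps being pairwise disjoint), or from the theorem of Derksen--Fink \cite{derksen-fink} that Schubert matroids form a basis of the valuative module of all matroids, since cuspidal matroids are Schubert matroids.

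It remains to show that the Tutte polynomials of these spanning matroids are linearly independent in $\mathbb{Q}[x,y]$. Because $\deg_x T_{\M}$ and $\deg_y T_{\M}$ recover the rank and the corank of $\M$, the Tutte polynomial already records the pair $(n,k)$, so the problem decouples into one finite problem for each $(n,k)$. For fixed $(n,k)$ the relevant matroids form a short explicit list — the uniform matroid $U_{k,n}$ and the cuspidal matroids, each optionally enlarged by some loops and coloops — and adjoining loops and coloops merely multiplies the Tutte polynomial by a monomial, so it suffices to treat the cuspidal ``cores''. I would order these by the size $h$ and rank $\mu$ of their unique nontrivial cyclic flat and use the explicit formula for the Tutte polynomial of a cuspidal matroid (equivalently, the formula of \cite{ferroni-schroter} for the change of $T$ under a stressed-hyperplane relaxation) to single out, for each core, a monomial $x^iy^j$ whose coefficient is a strictly monotone function of $(h,\mu)$ and is untouched by the cores that are smaller in the chosen order. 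The resulting triangular system admits no nontrivial linear relation, which finishes the argument.

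The main difficulty I anticipate lies in this last step — locating, inside the somewhat intricate Tutte polynomial of a cuspidal matroid, a coefficient that cleanly separates all of its parameters — together with the bookkeeping needed along the way: checking that the relaxation differences telescope as stated, describing precisely which Schubert matroids are elementary split, and keeping track of loops and coloops. Granting those, the universality is a formal consequence of the valuative machinery.
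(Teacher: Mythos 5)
Your plan is essentially the paper's own proof: reduce universality to comparing the $\mathcal{G}$-span and the $T$-span on each stratum (Proposition~\ref{prop:equivalences-universality}), use the stressed-subset relaxation formula to express the class of every elementary split matroid through cuspidal matroids (bounding the $\mathcal{G}$-rank by $k(n-k)+1$), and then prove linear independence of the $k(n-k)+1$ cuspidal Tutte polynomials by a triangular leading-coefficient argument. The one step you defer --- isolating a separating coefficient in the cuspidal Tutte polynomials --- is exactly Proposition~\ref{prop:tutte-cuspidals-independent}, which the paper resolves by passing to $T_{\LL_{r,k,h,n}}(x+1,y+1)$ and observing that $x^{r}y^{n-k+r-h}$ has coefficient $1$ and maximal total degree among monomials divisible by $xy$.
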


The above statement provides one possible explanation of why it is often challenging to construct two Tutte equivalent matroids $\M$ and $\N$ having the property that another given invariant $f$ distinguishes them. Since the class of elementary split matroids is expected to be very large, a na\"ive brute-force search is likely to produce matroids $\M$ and $\N$ belonging to this class. The last theorem shows that if $f$ is valuative (as it is often the case), then $f$ is a linear specialization of the Tutte polynomial and therefore one would obtain $f(\M)=f(\N)$.

A result by Kung in \cite{kung-syzygies} shows that the dimension of the linear span of the Tutte polynomials of all rank $k$ matroids on $n$ elements is $k(n-k)+1$. We  provide an independent proof of this fact.  As a combination of this with our Theorem~\ref{thm:main-characterization} we prove the following result.

\begin{restatable}{lettertheorem}{cuspidalbasis}\label{thm:cuspidals-basis-all}
    The Tutte polynomial of every matroid can be written uniquely as an integer combination of Tutte polynomials of cuspidal matroids of the same rank and size.
\end{restatable}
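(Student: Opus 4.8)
The plan is to combine the (reproved) dimension formula of Kung with the universal valuativity supplied by Theorem~\ref{thm:main-characterization}, and then to upgrade the resulting statement from rational to integral coefficients. Fix integers $0\le k\le n$ and let $\Lambda_{k,n}\subseteq\mathbb{Z}[x,y]$ be the $\mathbb{Z}$-module generated by the Tutte polynomials $T(\M)$ of all matroids $\M$ of rank $k$ on $n$ elements. As a finitely generated submodule of a torsion-free module, $\Lambda_{k,n}$ is free, and by Kung's formula~\cite{kung-syzygies} its rank is $k(n-k)+1$. Since there are exactly $k(n-k)+1$ cuspidal matroids of rank $k$ and size $n$, the theorem is equivalent to the claim that their Tutte polynomials form a $\mathbb{Z}$-basis of $\Lambda_{k,n}$: in that formulation, uniqueness of the expansion of any $T(\M)$ is automatic from linear independence over $\mathbb{Q}$, while existence of an \emph{integral} expansion is exactly the assertion that the cuspidal Tutte polynomials generate $\Lambda_{k,n}$ as a $\mathbb{Z}$-module.

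For the linear independence I would appeal to Theorem~\ref{thm:main-characterization}: the cuspidal matroids of rank $k$ and size $n$ all belong to one of its four maximal classes $\mathcal{C}$, on which the Tutte polynomial is universally valuative. Unwinding that property, the map $[\mathscr{P}(\M)]\mapsto T(\M)$ is injective on the valuative module of $\mathcal{C}$, i.e.\ on the span of $\{[\mathscr{P}(\M)]:\M\in\mathcal{C}\}$. The indicator functions $[\mathscr{P}(\mathsf{C})]$ of the cuspidal matroids are linearly independent there — cuspidal matroids are Schubert matroids, and the indicator functions of Schubert matroids are linearly independent in the ambient valuative module by the work of Derksen and Fink~\cite{derksen-fink} — so their images $T(\mathsf{C})$ are linearly independent over $\mathbb{Q}$. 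Having $k(n-k)+1$ linearly independent elements in the free module $\Lambda_{k,n}$ of the same rank, they form a $\mathbb{Q}$-basis of $\Lambda_{k,n}\otimes\mathbb{Q}$, which already settles uniqueness.

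It remains to prove integrality, namely that every $T(\M)$ is an integral, not merely rational, combination of cuspidal Tutte polynomials. By Derksen--Fink every matroid indicator is a $\mathbb{Z}$-combination of Schubert-matroid indicators, hence every $T(\M)$ is a $\mathbb{Z}$-combination of Tutte polynomials of Schubert matroids of rank $k$ and size $n$; since cuspidal matroids are among the Schubert matroids, it therefore suffices to expand the Tutte polynomial of an arbitrary Schubert matroid into cuspidal Tutte polynomials with integer coefficients. For this I would set up a recursion: a Schubert matroid of rank $k$ and size $n$ that is not cuspidal should admit a stressed subset $S$, and the corresponding relaxation identity should rewrite $T(\M)$ as $T(\Rel_S(\M))$ plus an \emph{integral} combination of Tutte polynomials of matroids of rank $k$ and size $n$ that are strictly simpler than $\M$ in a suitable well-founded order. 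Iterating terminates at a $\mathbb{Z}$-linear combination of Tutte polynomials of cuspidal matroids, which together with the previous paragraph shows that the cuspidal Tutte polynomials form a $\mathbb{Z}$-basis of $\Lambda_{k,n}$ and proves the theorem.

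I expect this last step to be the crux. Over $\mathbb{Q}$ the whole argument is a dimension count, but producing a genuine $\mathbb{Z}$-basis requires an honest unimodular change of coordinates: one must track exactly how an arbitrary (Schubert) matroid of rank $k$ and size $n$ reduces to cuspidal matroids under stressed-subset relaxations, verify that no denominators are introduced, and exhibit a well-founded order that guarantees the recursion terminates at cuspidal matroids. Equivalently, the task is to order the Schubert matroids of rank $k$ and size $n$ so that, after discarding the non-cuspidal ones, the matrix expressing the surviving Tutte polynomials in terms of the cuspidal Tutte polynomials is integral and triangular with $\pm1$ on the diagonal; producing such an order and checking its triangularity is the main combinatorial obstacle.
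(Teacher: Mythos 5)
Your dimension count and the uniqueness argument (two free $\mathbb{Z}$-modules of the same rank $k(n-k)+1$, one contained in the other) coincide with the paper's. Two problems remain. The smaller one is in your independence argument: universal valuativity within a class $\mathcal{C}$ does \emph{not} make the assignment $\amathbb{1}_{\mathscr{P}(\M)}\mapsto T(\M)$ injective on the span of indicator functions --- two labelings of the same matroid already have independent indicators but identical Tutte polynomials. What universality gives (via Proposition~\ref{prop:equivalences-universality}) is an isomorphism between the $\mathbb{Z}$-span of the $\mathcal{G}$-invariants of $\mathcal{C}_{n,k}$ and the span of their Tutte polynomials; combining this with the Derksen--Fink independence of $\mathcal{G}$-invariants of non-isomorphic Schubert matroids does yield the independence you want, so the slip is repairable. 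Note, however, that the paper proves this independence directly by a leading-monomial computation (Proposition~\ref{prop:tutte-cuspidals-independent}) precisely because that computation is an \emph{input} to Theorem~\ref{thm:split-universal}; if you derive independence from universality you must make sure you are not running in a circle.

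The serious issue is the integrality step, which you correctly identify as the crux but do not prove. Your proposed reduction of an arbitrary Schubert matroid to cuspidal ones by stressed-subset relaxations is only a program: you neither exhibit the well-founded order nor verify the claimed unimodular triangularity, and the recursion as described does not obviously close up, since relaxing a stressed subset replaces $T(\M)$ by $T(\Rel(\M,A))+T(\U_{k-r,n-h}\oplus\U_{r,h})-T(\LL_{r,k,h,n})$ and the correction term $\U_{k-r,n-h}\oplus\U_{r,h}$ is in general not a Schubert matroid, so you leave the class you are inducting on. The paper sidesteps all of this with Lemma~\ref{lemma:technical-lemma}: for each cuspidal $\M_j$ take the valuative invariant $F_j$ extending the indicator function of the isomorphism class $[\M_j]$ among Schubert matroids (\cite[Theorem~6.3]{derksen-fink}); since cuspidal matroids are elementary split and $T$ is universal on that class by Theorem~\ref{thm:split-universal}, one has $F_j=\widehat{F}_j\circ T$ there, and applying $\widehat{F}_j$ to the rational expansion $T(\M)=\sum_i\mu_i T(\M_i)$ shows that each $\mu_j$ equals the integer $\widehat{F}_j(T(\M))$. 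You should either adopt this argument or actually carry out and verify your triangular reduction; as it stands the theorem is not proved.
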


In this paper, a matroid is said to be \emph{cuspidal} if it is simultaneously elementary split and a Schubert matroid. In our previous work \cite{ferroni-schroter}, we proved that the evaluation of \emph{any} valuative invariant on an elementary split matroid can be written uniquely as an integer combination of the evaluations of the invariant at cuspidal matroids. However, Theorem~\ref{thm:cuspidals-basis-all} says that for the specific case of the Tutte polynomial, this property transfers to all matroids. 

It is reasonable to ask whether the remaining classes in Theorem~\ref{thm:main-characterization} lead to a variation of the statment in Theorem~\ref{thm:cuspidals-basis-all}. Indeed, the class in Theorem~\ref{thm:main-characterization}(\ref{it:class_uniform}) can be used to show that in the above statement one may replace cuspidal matroids by all uniform matroids with additional loops and coloops. 

\begin{restatable}{lettertheorem}{uniformbasis}\label{thm:uniform-loops-coloops-basis-all}
    The Tutte polynomial of every matroid can be written uniquely as an integer combination of Tutte polynomials of matroids of the same size and rank, all of which are isomorphic to uniform matroids with additional loops and coloops.
\end{restatable}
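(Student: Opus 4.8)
Fix integers $0\le k\le n$ and let $\mathcal{C}_{k,n}$ be the set of isomorphism classes of rank-$k$ size-$n$ matroids of the form $U_{r,m}\oplus U_{1,1}^{\oplus c}\oplus U_{0,1}^{\oplus\ell}$, i.e.\ uniform with $c$ added coloops and $\ell$ added loops. The first step is bookkeeping: writing the core so that $0<r<m$ (or empty), such a matroid is pinned down by the pair $(c,\ell)$ with $0\le c\le k-1$, $0\le\ell\le n-k-1$, together with the single degenerate matroid $U_{k,k}\oplus U_{0,1}^{\oplus(n-k)}$ of loops and coloops only; hence $\size{\mathcal{C}_{k,n}}=k(n-k)+1$. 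Let $W_{k,n}$ be the subgroup of $\mathbb{Z}[x,y]$ generated by the Tutte polynomials of all rank-$k$ size-$n$ matroids; it is free, of rank $k(n-k)+1$ by Kung \cite{kung-syzygies} (whose computation we reprove below), matching $\size{\mathcal{C}_{k,n}}$.

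The rational statement comes next. By the proof of Theorem~\ref{thm:main-characterization}(\ref{it:class_uniform}) the Tutte polynomial is universally valuative on the class of uniform matroids with additional loops and coloops, so every valuative invariant restricted to $\mathcal{C}_{k,n}$ is a linear specialization of $T$; since the $k(n-k)+1$ matroids in $\mathcal{C}_{k,n}$ are valuatively independent, an evaluation matrix of suitable valuative invariants --- each a linear specialization of $T$ --- being invertible forces the polynomials $T(\M;x,y)$, $\M\in\mathcal{C}_{k,n}$, to be linearly independent. As there are $k(n-k)+1$ of them and $W_{k,n}$ has this rank, they form a $\mathbb{Q}$-basis of $W_{k,n}\otimes\mathbb{Q}$; consequently every rank-$k$ size-$n$ Tutte polynomial is a \emph{unique} rational combination of them, and uniqueness over $\mathbb{Z}$ is immediate.

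It remains to promote these coefficients to integers, which is the heart of the proof. I would argue by comparison with the cuspidal basis of Theorem~\ref{thm:cuspidals-basis-all}: the cuspidal Tutte polynomials of rank $k$ and size $n$ form a $\mathbb{Z}$-basis of $W_{k,n}$, and there are $\size{\mathcal{C}_{k,n}}$ of them, so it suffices to show that the square transition matrix $P$ with $T(\M;x,y)=\sum_{\mathsf{C}}P_{\M,\mathsf{C}}\,T(\mathsf{C};x,y)$ (sum over cuspidal $\mathsf{C}$ of the same rank and size) is unimodular; equivalently, that every cuspidal Tutte polynomial lies in the integral span of $\{T(\M):\M\in\mathcal{C}_{k,n}\}$. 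To control $P$, index the two families compatibly and use that the number of coloops and of loops of a matroid are read off as the largest powers of $x$ and of $y$ dividing its Tutte polynomial, so that $T(\M)=x^{c}y^{\ell}T(U_{r,m})$ with $T(U_{r,m})$ divisible by neither variable. Together with the corresponding factorization of the cuspidal Tutte polynomials, this should organize $P$ into a triangular shape (with respect to an ordering refining the loop and coloop counts) with invertible diagonal blocks, reducing the claim to how the ``connected'' pieces $T(U_{r,m};x,y)$ decompose over the cuspidal matroids of rank $r$ and size $m$ --- an explicit finite computation that should be manageable since uniform matroids are Schubert matroids of a particularly simple shape.

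The main obstacle is exactly this integrality step: proving $\det P=\pm1$, equivalently that the $\mathbb{Z}$-span of $\{T(\M):\M\in\mathcal{C}_{k,n}\}$ is not merely of finite index in, but equal to, $W_{k,n}$. An alternative that sidesteps Theorem~\ref{thm:cuspidals-basis-all} is to rewrite the Tutte polynomial of an arbitrary rank-$k$ size-$n$ matroid over $\mathcal{C}_{k,n}$ directly, by iterating the matroid-polytope subdivision (stressed-subset relaxation) relations that $T$ respects, as in \cite{ferroni-schroter}; but checking that such a procedure terminates with integer coefficients in the uniform-plus-loops-and-coloops family is the same difficulty seen polyhedrally, and is presumably most naturally extracted from the proof of Theorem~\ref{thm:main-characterization}(\ref{it:class_uniform}) itself.
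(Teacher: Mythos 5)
Your first two steps (the count of $k(n-k)+1$ isomorphism classes in $\mathcal{U}_{n,k}$, the equality of this number with the $T$-rank of all of $\mathcal{M}_{n,k}$, and hence the existence and uniqueness of a \emph{rational} decomposition) match the paper's argument, except that the paper establishes the linear independence of these $k(n-k)+1$ Tutte polynomials directly, by inspecting lowest-degree monomials (Proposition~\ref{prop:U-tuttes-independent}), rather than through your somewhat circular appeal to ``valuative independence'' plus an invertible evaluation matrix.

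The genuine gap is the integrality step, which you correctly identify as the heart of the matter but do not close. Your proposed route --- proving the transition matrix $P$ to the cuspidal basis is unimodular via a triangularity argument organized by loop and coloop counts --- does not get off the ground: cuspidal matroids $\LL_{r,k,h,n}$ are, apart from degenerate cases, loopless and coloopless, so the factorization $T(\M)=x^{c}y^{\ell}\,T(\U_{r,m})$ has no useful counterpart on the cuspidal side and induces no triangular structure on $P$. The paper closes the gap with Lemma~\ref{lemma:technical-lemma}, whose key input is the Derksen--Fink theorem that for each isomorphism class $[\M_i]$ of Schubert matroids the indicator function of that class extends uniquely to a valuative invariant $F_i:\mathcal{M}_{n,k}\to\mathbb{Z}$. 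Every matroid in $\mathcal{U}_{n,k}$ is a Schubert matroid (its cyclic flats form a chain of length at most two), and $T$ is universally valuative on $\mathcal{U}$ by Proposition~\ref{prop:U_and_T_are_univ}, so each $F_i$ factors as $\widehat{F}_i\circ T$ on $\mathcal{U}_{n,k}$; applying $\widehat{F}_j$ to the rational identity $T(\M)=\sum_i \mu_i\, T(\M_i)$ kills every term except the $j$-th and yields $\mu_j=\widehat{F}_j(T(\M))\in\mathbb{Z}$. Without this idea (or an equivalent unimodularity computation, which you have not supplied), your argument only shows that the coefficients are uniquely determined rationals.
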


We mention that Theorem \ref{thm:cuspidals-basis-all} and Theorem \ref{thm:uniform-loops-coloops-basis-all} about Tutte polynomials of cuspidal matroids or uniform matroids with additional loops and coloops were discovered recently also by Kung in \cite[Section~5]{kung-syzygies}. Kung's work is of high relevance to our paper, though we provide an independent and self-contained approach. One of the benefits of our procedure is that it led us to discover a third and new basis for Tutte polynomials seen as the elements in a $\mathbb{Z}$-module. This new basis, described in the next theorem, provides natural answers to some interesting questions raised by Kung in \cite[Section~9]{kung-syzygies}.

\begin{restatable}{lettertheorem}{graphicschubertbasis}\label{thm:graphicschubert-basis-all}
    The Tutte polynomial of every matroid can be written uniquely as an integer combination of Tutte polynomials of graphic Schubert matroids of the same rank and size.
\end{restatable}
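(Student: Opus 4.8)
The plan is to combine three ingredients: first, the fact (Theorem~\ref{thm:split-universal}, or rather its underlying mechanism) that the Tutte polynomial is a universal valuative invariant on elementary split matroids, together with the structure of graphic Schubert matroids as a distinguished subfamily; second, Kung's dimension count, namely that the $\mathbb{Z}$-module (or $\mathbb{Q}$-vector space) spanned by Tutte polynomials of rank-$k$ matroids on $n$ elements has dimension $k(n-k)+1$; and third, a counting argument showing that the number of isomorphism classes of graphic Schubert matroids of rank $k$ on $n$ elements is exactly $k(n-k)+1$, so that the only thing left to verify is linear independence of their Tutte polynomials and the integrality of the change of basis.

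First I would recall that every graphic Schubert matroid is in particular a Schubert matroid that is also elementary split, i.e., a cuspidal matroid in the terminology of the excerpt is a special case; more precisely graphic Schubert matroids should form a family indexed by the same combinatorial data (pairs controlling a single ``staircase'' parameter) whose cardinality is $k(n-k)+1$. I would make this count explicit: enumerate the graphic Schubert matroids of rank $k$ and size $n$ and check there are precisely $k(n-k)+1$ of them, matching Kung's dimension. This reduces the theorem to showing that the Tutte polynomials of these $k(n-k)+1$ matroids are linearly independent over $\mathbb{Q}$ \emph{and} that an arbitrary matroid's Tutte polynomial is an \emph{integer} combination of them.

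For linear independence, the natural route is to exhibit a triangularity: order the graphic Schubert matroids by their defining parameter, and find a linear functional on Tutte polynomials (for instance a particular coefficient $t^{a}z^{b}$, or a suitable evaluation) that, when applied to the $i$-th basis element, gives a matrix that is triangular with nonzero (ideally $\pm 1$) diagonal entries. Since the ambient span has dimension exactly $k(n-k)+1$ by Kung's result, linear independence of $k(n-k)+1$ such polynomials forces them to be a basis of the $\mathbb{Q}$-span, hence every Tutte polynomial is uniquely a $\mathbb{Q}$-combination of them. To upgrade uniqueness-over-$\mathbb{Q}$ to integrality, I would show the change-of-basis matrix between the graphic Schubert Tutte polynomials and one of the already-established integral bases (the cuspidal basis of Theorem~\ref{thm:cuspidals-basis-all}, or the uniform-with-loops-and-coloops basis of Theorem~\ref{thm:uniform-loops-coloops-basis-all}) is unimodular, i.e., has determinant $\pm 1$; this again follows from a triangularity/unitriangularity statement once both families are ordered compatibly by their staircase parameters. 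Combining unimodularity of that transition matrix with the integrality of, say, the cuspidal expansion then yields the integral expansion in the graphic Schubert basis.

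The main obstacle I anticipate is the integrality (unimodularity) step rather than linear independence: linear independence will follow cheaply from a leading-term argument plus Kung's dimension bound, but proving that the transition matrix between graphic Schubert matroids and cuspidal (or uniform-with-loops) matroids is \emph{unimodular} requires understanding precisely how a graphic Schubert matroid decomposes valuatively — concretely, writing the indicator function of its base polytope as a $\mathbb{Z}$-combination of indicators of the reference family's polytopes and checking the resulting matrix is triangular with unit diagonal. This is where the explicit polyhedral/combinatorial description of graphic Schubert matroids (as relaxations or as lattice-path matroids of a specific shape) must be used in an essential way, and it is the step that genuinely answers Kung's questions from \cite[Section~9]{kung-syzygies}. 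A secondary, more bookkeeping-level difficulty is making the indexing of all three families (graphic Schubert, cuspidal, uniform-with-loops-and-coloops) uniform enough that the triangularity is manifest in the same order for all the transition matrices at once.
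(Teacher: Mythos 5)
Your outline matches the paper on the first two ingredients: the count of $k(n-k)+1$ isomorphism classes of graphic Schubert matroids (which the paper identifies with the class $\mathcal{T}$ of minimal matroids with added loops and coloops, Proposition~\ref{prop:equivalences-T}), the leading-term argument for linear independence of their Tutte polynomials (Proposition~\ref{prop:T-tuttes-independent}), and the comparison with the $T$-rank $k(n-k)+1$ of all of $\mathcal{M}_{n,k}$ (Corollary~\ref{coro:T-rank-of-matroids}). Up to that point you correctly obtain existence and uniqueness of a \emph{rational} expansion.

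The genuine gap is the integrality step, which you yourself flag as the main obstacle and do not carry out. Your proposed route --- proving that the transition matrix from the graphic Schubert family to the cuspidal (or uniform-with-loops-and-coloops) family is unimodular via a compatible triangular ordering --- is not obviously available: the two families are indexed by genuinely different combinatorial data, and no triangular unimodular relation between their Tutte polynomials is exhibited; unimodularity of that transition matrix is a \emph{consequence} of both basis theorems holding simultaneously, so it cannot simply be asserted while proving one of them. The paper sidesteps this entirely with Lemma~\ref{lemma:technical-lemma}: by Derksen--Fink, the indicator function of each isomorphism class of Schubert matroids extends to a valuative invariant $F_i$ on all of $\mathcal{M}_{n,k}$; since $T$ is universally valuative within $\mathcal{T}_{n,k}$ (Proposition~\ref{prop:U_and_T_are_univ}) and every member of $\mathcal{T}_{n,k}$ is Schubert, applying the specializations $\widehat{F}_j\circ T$ to the rational expansion extracts each coefficient $\mu_j$ as a value of an integer-valued function, hence an integer. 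Separately, note a factual slip in your setup: graphic Schubert matroids are not all cuspidal, nor even all elementary split --- for instance $\U_{0,1}\oplus\U_{1,2}\oplus\U_{1,1}$ is graphic Schubert but is precisely the excluded minor for elementary split matroids --- so the universality you need is that of $T$ within the class $\mathcal{T}$ itself, not within the elementary split matroids of Theorem~\ref{thm:split-universal}.
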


This is the analog of Theorem~\ref{thm:cuspidals-basis-all} but applied to the family described in Theorem~\ref{thm:main-characterization}(\ref{it:class_minimal}). Furthermore, we explain why there is no analog for the class of matroids described in Theorem~\ref{thm:main-characterization}(\ref{it:class_sparse}).

\section{Preliminaries}
\noindent 
Throughout this paper we shall assume that the reader has acquaintance with the basic tools in matroid theory, including Tutte polynomials and matroid polytopes. Our notation and conventions on matroid theory follow Oxley \cite{oxley}, and for a thorough reference on Tutte polynomials we refer to the article of Brylawski and Oxley \cite{brylawski-oxley}. For a treatment on valuative invariants with 
a focus on the $\mathcal{G}$-invariant, we refer to the book chapter by Falk and Kung in \cite{handbook-Tutte}.

We find it convenient to set up the results of this paper as a continuation of our previous manuscript \cite{ferroni-schroter}. To facilitate the navigation, we summarize in this section the definitions and results that are required for the present article.

\subsection{Elementary split and cuspidal matroids} 

Let $\M$ be a matroid on the ground set $E=[n]$ and having rank $k$. A subset $A\subseteq E$ is said to be \emph{stressed} if the restriction $\M|_A$ and the contraction $\M/A$ are both isomorphic to uniform matroids. The \emph{cusp} of a set $A\subseteq E$ is the collection of $k$-sets
\[ \operatorname{cusp}_{\M}(A) := \left\{ S\in \binom{E}{k} : |S\cap A| \geq \rk(A) + 1\right\}.\]

In \cite{ferroni-schroter} we described a generalization of the well-known circuit-hyperplane relaxation. Whenever $A$ is a stressed subset of the matroid $\M$ and $\mathscr{B}$ denotes the family of bases of $\M$, one may construct the relaxed matroid $\Rel(\M,A)$ on the ground set $E$ whose family of bases are  $\mathscr{B}\sqcup \cusp_{\M}(A)$.  
If $A$ and $A'$ are two different stressed subsets of $\M$, then $A'$ is still stressed in $\Rel(\M,A)$, and the cusp of $A'$ in $\Rel(\M,A)$ equals $\cusp_{\M}(A')$. In particular to each matroid one may associate a canonical relaxed matroid, obtained by relaxing (in any order) all the stressed subsets with non-empty cusp. The class of \emph{elementary split matroids} consists of all matroids that yield a uniform matroid when performing all the relaxations of such stressed subsets. 

A summary of equivalent characterizations of elementary split matroids is the following.
\begin{theorem}[{\cite{berczi,ferroni-schroter,ferroni-schroter-mw}}]\label{thm:elem_split_matroids}
    The following are equivalent:
    \begin{enumerate}[\normalfont(i)]
        \item $\M$ is elementary split.
        \item The matroid obtained from $\M$ by relaxing all the stressed subsets with non-empty cusp is uniform.
        \item $\M$ does not contain any minor isomorphic to $\U_{0,1}\oplus \U_{1,2}\oplus \U_{1,1}$.
        \item The proper cyclic flats of $\M$ , i.e, those cyclic flats that are neither empty or the entire ground set, form a clutter.
    \end{enumerate}
\end{theorem}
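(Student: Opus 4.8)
The plan is to establish the cycle \textup{(i)}$\Leftrightarrow$\textup{(ii)}$\Rightarrow$\textup{(iii)}$\Rightarrow$\textup{(iv)}$\Rightarrow$\textup{(ii)}. The equivalence \textup{(i)}$\Leftrightarrow$\textup{(ii)} is essentially the definition recalled in the preliminaries, once one knows (from \cite{ferroni-schroter}) that the canonical relaxation $\overline{\M}$ is well defined independently of the order of relaxations; so the real content is to tie these to \textup{(iii)} and \textup{(iv)}. I would first isolate as a lemma the fact that a subset $A\subseteq E$ is stressed with $\operatorname{cusp}_{\M}(A)\neq\varnothing$ \emph{if and only if} $A$ is a proper cyclic flat with $\M|_A$ and $\M/A$ both uniform: a nonempty cusp forces $A$ to be dependent; if $A$ were not a flat then $\M/A$ would have a loop, hence be $\U_{0,n-\size{A}}$, forcing $A$ to be spanning and $\operatorname{cusp}_{\M}(A)=\varnothing$; and a dependent uniform restriction is coloopless, so $A$ is cyclic. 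Consequently $\overline{\M}$ has base set $\mathscr{B}(\M)\cup\bigcup_{A}\operatorname{cusp}_{\M}(A)$ over all such $A$, and $\M$ is elementary split exactly when every non-basis $k$-set $S$ satisfies $\size{S\cap A}>\rk(A)$ for one of these $A$. Throughout I would use the standard characterization of bases via cyclic flats: a $k$-set $S$ is a basis iff $\size{S\cap Z}\leq\rk(Z)$ for every cyclic flat $Z$, so every non-basis violates this for some \emph{proper} cyclic flat.

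For \textup{(iv)}$\Rightarrow$\textup{(ii)} I treat the three shapes. If $\M\cong\U_{0,\ell}\oplus\U_{k,n-\ell}$ (the nontrivial case being $1\le\ell$, $1\le k<n$), then the set $L$ of loops is a proper cyclic flat with $\M|_L,\M/L$ uniform and $\operatorname{cusp}_{\M}(L)$ equal to the set of non-bases of $\M$, so $\Rel(\M,L)=\U_{k,n}$. The case $\M\cong\U_{\ell,\ell}\oplus\U_{k-\ell,n-\ell}$ is dual, using the complement of the set of coloops. For the loopless, coloopless case the crucial step is that if the proper cyclic flats form a clutter then every proper cyclic flat $F$ is stressed: any cyclic flat of $\M|_F$ is, via its closure in $\M$, a cyclic flat of $\M$ contained in $F$, hence equal to $\varnothing$ or $F$ by the antichain property, so $\M|_F$ is uniform; dually $\M/F$ is uniform. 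Thus every proper cyclic flat is among the relaxable $A$'s, and since every non-basis violates $\size{S\cap Z}\leq\rk(Z)$ for some proper cyclic flat $Z$, relaxing all of them yields $\U_{k,n}$.

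The heart of the argument is \textup{(iii)}$\Rightarrow$\textup{(iv)}, which I would prove in contrapositive. Assume $\M$ satisfies none of the three shapes. First I extract a chain $Z_1\subsetneq Z_2$ of cyclic flats with $Z_1\neq\varnothing$ and $\rk(Z_2)<k$: if $\M$ is loopless and coloopless this is exactly the failure of the clutter condition; if $\M$ has a loop it must fail shape~(a), so $\M\setminus L$ (with $L$ the loops) is not uniform, hence contains a circuit $C$ with $2\le\size{C}\le\rk(\M)$, and I take $Z_1=L$, $Z_2=L\cup\operatorname{cl}_{\M\setminus L}(C)$; the coloop case follows by duality, since $\U_{0,1}\oplus\U_{1,2}\oplus\U_{1,1}$ is self-dual. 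Then I build the forbidden minor: pick a basis $B_1$ of $Z_1$ and an element $a\in Z_1\smallsetminus B_1$; extend $B_1$ to a basis $B_1\sqcup B_1'$ of $Z_2$ and choose $b'\in B_1'$; extend once more to a basis $B_1\sqcup B_1'\sqcup B''$ of $\M$ with $d\in B''$ (possible since $\rk(Z_2)<k$); and contract $X:=B_1\sqcup(B_1'\smallsetminus\{b'\})$. In $\M/X$ the element $a$ is a loop, and the rank-one flat $Z_2\smallsetminus X$ contains, besides the loops coming from $Z_1$, at least two parallel elements $b',c$ --- here one uses that $\M|_{Z_2}$ is coloopless, because $Z_2$ is cyclic, so no corank-one flat of $\M|_{Z_2}$ can omit exactly one point of $Z_2$. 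Deleting everything but $a,b',c,d$ leaves a matroid with one loop, one parallel pair and one coloop, that is, $\U_{0,1}\oplus\U_{1,2}\oplus\U_{1,1}$.

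Finally \textup{(ii)}$\Rightarrow$\textup{(iii)}: elementary split matroids are closed under minors --- a consequence of the compatibility of stressed-subset relaxations with deletion and contraction established in \cite{ferroni-schroter} --- while $\U_{0,1}\oplus\U_{1,2}\oplus\U_{1,1}$ is not elementary split, since one checks directly that it has no stressed subset with nonempty cusp (neither of its two cyclic flats is stressed, the relevant restrictions and contractions failing to be uniform), so its canonical relaxation is itself. Hence no elementary split matroid contains it as a minor, which gives \textup{(iii)} and closes the cycle. The step I expect to be the main obstacle is the minor construction above: keeping the bookkeeping honest --- that $a,b',c,d$ are genuinely four distinct elements, that the single contraction simultaneously makes $a$ a loop and $\{b',c\}$ an honest circuit while leaving $d$ a coloop, and that the reduction to a chain $Z_1\subsetneq Z_2$ really covers every way in which \textup{(iv)} can fail --- is where the real work lies; the verification in the \textup{(iv)}$\Rightarrow$\textup{(ii)} step that the clutter hypothesis forces all proper cyclic flats to be stressed is the secondary delicate point.
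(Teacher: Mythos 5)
First, a caveat about the comparison: the paper does not actually prove Theorem~\ref{thm:elem_split_matroids}; it is stated as a compilation of results cited from \cite{berczi} (the excluded-minor characterization, Theorem~11 there), \cite{ferroni-schroter} and \cite{ferroni-schroter-mw}. So there is no in-paper argument to measure you against, and your write-up, where correct, supplies strictly more detail than the paper does. Most of it is indeed correct and careful: the lemma identifying stressed subsets with non-empty cusp as exactly the proper cyclic flats with uniform restriction and contraction is right (and is the right organizing principle); the verification of (iv)$\Rightarrow$(ii) in all three shapes is sound, including the observation that the clutter property forces every proper cyclic flat to be stressed; and the minor construction for (iii)$\Rightarrow$(iv) checks out --- the reduction of every failure of (iv) to a chain $Z_1\subsetneq Z_2$ of cyclic flats with $Z_1\neq\varnothing$ and $\rk(Z_2)<k$ covers all cases (loops are separators, so a non-shape-(a) matroid with a loop has a non-uniform loopless part and hence a short circuit; the coloop case is dual; the loopless--coloopless case is the failed clutter), and the contraction of $X=B_1\sqcup(B_1'\smallsetminus\{b'\})$ really does produce a loop $a$, a genuine parallel pair $b',c$ (via cyclicity of $Z_2$), and a coloop $d$, all distinct. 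This is essentially the argument of \cite[Theorem~11]{berczi}.

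The one genuine gap is the step (ii)$\Rightarrow$(iii). You assert that elementary split matroids are minor-closed ``as a consequence of the compatibility of stressed-subset relaxations with deletion and contraction.'' Relaxation does not commute with deletion and contraction in any straightforward way, and this is not how minor-closedness is established in the literature: there it is derived from the cyclic-flat description (iv) or from the excluded-minor characterization itself --- i.e., from exactly the implications you are in the middle of proving. As it stands, your cycle proves (i)$\Leftrightarrow$(ii), (iii)$\Rightarrow$(iv) and (iv)$\Rightarrow$(ii), but the return arrow is missing: you still owe a proof that a matroid satisfying (ii) (equivalently (iv)) has no minor isomorphic to $\U_{0,1}\oplus\U_{1,2}\oplus\U_{1,1}$. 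The clean way to fill this is to show directly that the class defined by (iv) is closed under single-element deletion and contraction (tracking how cyclic flats of $\M\smallsetminus e$ and $\M/e$ arise from those of $\M$), and then observe, as you correctly do, that $\U_{0,1}\oplus\U_{1,2}\oplus\U_{1,1}$ fails (iv) (it has a loop and a coloop and is not of either degenerate shape). Until that is done, the appeal to minor-closedness is circular in spirit, since it is precisely the content that the excluded-minor theorem is normally used to deliver.
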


Parts of this statement are established in the work of B\'erczi, Kir\'aly, Schwarcz, Yamaguchi, and Yokoi \cite[Theorem~11]{berczi} while others appeared in \cite[Section~4]{ferroni-schroter} and \cite[Section~2.1]{ferroni-schroter-mw}. As we mentioned in the introduction, elementary split matroids are essentially equivalent to the class of split matroids, introduced by Joswig and Schr\"oter in \cite{joswig-schroter} via compatible splits of the hypersimplex. Under the assumption of connectedness, both split and elementary split matroids coincide. One reason we prefer to work with elementary split matroids rather than all split matroids is that many statements about connected elementary split matroids translate without modification to disconnected elementary split matroids, whereas for split matroids it is often required to deal with additional cases. 
On the other hand, unlike the single excluded minor established in the preceding statement, for split matroids one needs to exclude five matroids \cite{cameron-mayhew}.

A matroid is said to be a \emph{Schubert matroid} or \emph{nested matroid} if its lattice of cyclic flats is a chain. For more details about lattices of cyclic flats of matroids we refer to the article by Bonin and De Mier \cite{bonin-demier-cyclic}. 
A \emph{cuspidal matroid} is a matroid that is at the same time Schubert and elementary split. 
The prototypical cuspidal matroid is constructed as follows: within the matroid $\M=\U_{k-r,n-h}\oplus \U_{r,h}$, the subset corresponding to the ground set of the second summand is obviously stressed; the matroid obtained by relaxing it is denoted by $\LL_{r,k,h,n}$. That is, the matroid $\LL_{r,k,h,n}$ has bases 
\[ 
    \left\{ B\in \binom{[n]}{k} : |B\cap \{1,\ldots,h\}| \geq r\right\},
\]
and thus is a \emph{lattice path matroid} \cite{bonin-demier-structural} with restricted upper path and trivial lower path; see Figure~\ref{fig:cuspidal}.
Furthermore, every cuspidal matroid of rank $k$ on $n$ elements is isomorphic to some matroid $\LL_{r,k,h,n}$ with $0 \leq r\leq h$ and $0<k-r\leq n-h$.

    \begin{figure}[ht]
        \centering
        \begin{tikzpicture}[scale=0.40, line width=.4pt]

        \draw[fill=gray, fill opacity =0.4] (0,0) -- (0,3) -- (5,3) -- (5,7) -- (9,7) -- (9,0) -- cycle;
        
        \draw[line width=1.5pt,line cap=round] (0,0)--(0,3) -- (5,3)--(5,7)--(9,7);
        \draw (0,0) grid (9,7);
        \draw[decoration={brace,raise=7pt},decorate]
            (0,0) -- node[left=7pt] {$k-r$} (0,3);
        \draw (5,0) grid (9,7);
        \draw[decoration={brace,mirror, raise=4pt},decorate]
        (0,0) -- node[below=7pt] {$n-k$} (9,0);
        
        \draw[decoration={brace, raise=5pt},decorate]
        (5,7) -- node[above=7pt] {$h-r$} (9,7); 
        
        \draw[decoration={brace, raise=5pt},decorate]
        (9,7) -- node[right=7pt] {$k$} (9,0); 
        \end{tikzpicture}
        \caption{The lattice path representation of the matroid $\LL_{4,7,8,16}$.}
        \label{fig:cuspidal}
    \end{figure}
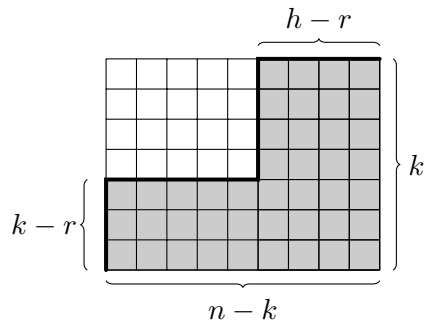

A special type of cuspidal matroid are the so-called \emph{minimal matroids} (named this way in \cite{dinolt}), which are denoted by $\T_{k,n}\cong \LL_{k-1,k,k,n}$. Their name stems from the fact that among all connected matroids on $n$ elements and having rank $k$, the matroid $\mathsf{T}_{k,n}$ is the only one attaining the minimal possible number of bases, $k(n-k)+1$. Minimal matroids are graphic, and hence binary. The minimal matroid $\mathsf{T}_{k,n}$ corresponds to a cycle of length $k+1$ in which one edge is replaced by $n-k$ parallel copies, see Figure~\ref{fig:minimal}.

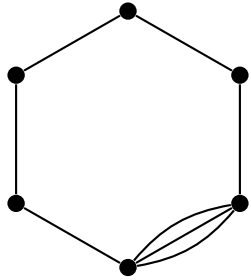
\begin{figure}[ht]
        \centering
        \begin{tikzpicture}  
    	[scale=0.05,auto=center,every node/.style={circle, fill=black, inner sep=2.3pt},every edge/.append style = {thick}] 
    	\tikzstyle{edges} = [thick];
        \graph  [empty nodes, clockwise, radius=1em,
        n=9, p=0.3] 
            { subgraph C_n [n=6,clockwise,radius=1.7cm,name=A]};
            
		\draw[edges] (A 3) edge[bend right=20] (A 4);
		\draw[edges] (A 3) edge[bend right=-20] (A 4);
    \end{tikzpicture}\caption{The underlying graph of a minimal matroid $\T_{5,8}$}\label{fig:minimal}
\end{figure}

\subsection{Valuative invariants and universality}

Throughout this paper we will denote by $\mathcal{M}_{E}$ the set of all matroids on ground set $E$. The notation $\mathcal{M}_{E,k}$ stands for the matroids on $E$ having rank $k$. Whenever $E=[n]$ we often write $\mathcal{M}_n$ or $\mathcal{M}_{n,k}$. The disjoint union of all $\mathcal{M}_{n,k}$ for every pair of integers $k$ and $n$ such that $0\leq k\leq n$ will be denoted by $\mathcal{M}$.
 
Recall that every matroid $\M$ has associated to it its base polytope $\mathscr{P}(\M)\subseteq \mathbb{R}^E$ which is the convex hull of all the indicator vectors of its bases. The \emph{indicator function} of this polytope is the map $\amathbb{1}_{\mathscr{P}(\M)}:\mathbb{R}^E\to \mathbb{Z}$ defined by
    \[ \amathbb{1}_{\mathscr{P}(\M)}(x) = \begin{cases} 1 & x\in \mathscr{P}(\M), \\ 0 & \text{otherwise.}\end{cases}\]

Let $A$ be any (additive) abelian group. A function $f:\mathcal{M}_{E,k}\to A$ is said to be \emph{valuative} if $\sum_{i=1}^s a_i\cdot f(\M_i) = 0$ whenever the function $x\mapsto\sum_{i=1}^s a_i\cdot  \amathbb{1}_{\mathscr{P}(\M_i)}(x)$ from $\mathbb{R}^E$ to $\mathbb{Z}$ is identically zero. A result of Derksen and Fink \cite[Corollary~3.9]{derksen-fink} shows that this is equivalent to requiring that $f$ behaves as a measure under matroid polytope subdivisions, i.e., that
    \[
    f(\M) = \sum_{I\subseteq [s]} (-1)^{|I|-1} f\Bigl(\bigcap_{i\in I} \M_i\Bigr),
    \]
whenever $\{\M_1,\ldots,\M_s\}$ is a list of matroids for which the collection of their base polytopes $\left\{\mathscr{P}(\M_1),\ldots\mathscr{P}(\M_s)\right\}$ forms the maximal cells of a subdivision of $\mathscr{P}(\M)$, and $\bigcap_{i\in I} \M_i$ denotes the matroid whose base polytope is $\bigcap_{i\in I} \mathscr{P}(\M_i)$. For more on other versions of the notion of valuation, we refer to \cite[Appendix~A]{eur-huh-larson}. 

A function $f:\mathcal{M}\to X$ with image in the set $X$ is said to be a \emph{matroid invariant} if $f(\M) = f(\N)$ whenever $\M$ and $\N$ are isomorphic matroids. If $f:\mathcal{M}\to A$ is simultaneously a valuation and a matroid invariant, we call $f$ a \emph{valuative invariant}. 

\begin{example}
    The map $T:\mathcal{M}\to \mathbb{Z}[x,y]$ that associates to each matroid $\M$ its Tutte polynomial $T(\M):=T_{\M}(x,y)$ is a valuative invariant. This is a very non-trivial feature of the Tutte polynomial. For a proof, we refer to \cite[Theorem~5.4]{ardila-fink-rincon} or \cite[Lemma~3.4]{speyer-conjecture}.
\end{example}

Since the Tutte polynomial is a valuative invariant, one may conclude that all linear specializations of the Tutte polynomial are valuative invariants as well. This comprises well-known matroid statistics such as the number of bases, the number of independent sets, or the $\beta$-invariant of the matroid. In this article we would like to understand the classes of matroids for which the Tutte polynomial is in a sense ``the most general'' valuative invariant. This leads us to formulate the following definition.

\begin{definition}\label{def:universal-valuation}
    Let $u:\mathcal{M}\to A$ be a valuative invariant, and $\mathcal{C}\subseteq \mathcal{M}$ a subclass of matroids. We say that $u$ is \emph{valuatively universal within $\mathcal{C}$} if for each valuative invariant $f:\mathcal{M}\to A'$ there exists a homomorphism $\widehat{f}:A\to A'$ such that
        \[ f(\M) = \widehat{f}(u(\M))\]
    for all matroids $\M\in \mathcal{C}$. In other words $f(\M)$ is a linear specialization of $u(\M)$. 
\end{definition}

Let us denote by $\Val(\mathcal{M},A)$ the space of valuations from $\mathcal{M}$ into the abelian group~$A$. Since the set $\mathcal{M}$ is the disjoint union of all the strata $\mathcal{M}_{n,k}$,
one obtains the direct sum decomposition
    \[ \Val(\mathcal{M}, A) = \bigoplus_{n,k} \Val(\mathcal{M}_{n,k},A).
    \]
This decomposition can be transferred to any subclass $\mathcal{C}\subseteq \mathcal{M}$ of matroids, as one may stratify $\mathcal{C}$ by the disjoint sets $\mathcal{C}_{n,k}:= \mathcal{M}_{n,k} \cap \mathcal{C}$.
Hence, describing a valuation on all of $\mathcal{M}$ is actually equivalent to describing its restrictions to each set $\mathcal{M}_{n,k}$. On the other hand, checking that an invariant $u:\mathcal{M}\to A$ is a universal valuative invariant within $\mathcal{C}$ requires constructing for each valuative invariant $f:\mathcal{M}\to A'$ an infinite family of homomorphisms $\{\widehat{f}_{n,k}\}_{0\leq k\leq n} \subseteq \operatorname{Hom}(A,A')$ such that $f(\M) = \widehat{f}_{n,k}(u(\M))$ for each $\M\in \mathcal{C}_{n,k}$.

Note that if $u:\mathcal{M}\to A$ is valuatively universal within the class of matroids $\mathcal{C}$, then $u$ is valuatively universal within any subclass $\mathcal{C}'\subseteq \mathcal{C}$.

A central result of Derksen and Fink \cite[Theorem~1.4]{derksen-fink}, previously conjectured by Derksen \cite{derksen}, is that a certain invariant $\mathcal{G}:\mathcal{M}\to \mathbf{U}$ is valuatively universal within all matroids $\mathcal{M}$. In other words, any matroid valuative invariant is a linear specialization of $\mathcal{G}$. It follows that the invariant $\mathcal{G}$ is valuatively universal within every class $\mathcal{C}$.  
This invariant plays a major role in this article, so below we provide its definition. 

We begin by describing a certain abelian group $\mathbf{U}$ as follows. Let $\mathbf{U}_n$ be the free abelian group generated by the $2^n$ symbols $U_{s}$
indexed by the $0/1$-vectors $s\in \mathbb{R}^n$. The group $\mathbf{U}$ is the direct sum of all groups $\mathbf{U}_{n}$ for $n\geq 0$. Now we are able to define the valuation $\mathcal{G}$.

\begin{definition}\label{def:ginvar}
    Let $\M$ be a matroid on $E=[n]$. Consider the set of all maximal flags of subsets of $E$ of the form $C=[\varnothing = S_0 \subsetneq S_1 \subsetneq \cdots \subsetneq S_n = E]$. There are $n!$ of these flags.
    To the flag $C$ we associate the generator $U_{s}$ of $\mathbf{U}_{n}$, indexed by the tuple $s=s(C)=(s_1,\ldots,s_n)\in\{0,1\}^n$, 
    with $s_i:=\rank_\M(S_i)-\rank_\M(S_{i-1})$ for $1 \leq i\leq n$.
    For each integer $n\geq 0$, we define the map $\mathcal{G}_{n}:\mathcal{M}_{n}\to \mathbf{U}_n$ by
        \[ \mathcal{G}_{n}(\M) = \sum_{C} U_{s(C)}\]
        where the sum is taken over all maximal flags of the ground set of $\M$.
    The $\mathcal{G}$-invariant is the map $\mathcal{G}:\mathcal{M}\to \mathbf{U}$ with $\mathcal{G}|_{\mathcal{M_n}} = \mathcal{G}_{n}$, that is, $\mathcal{G} = \bigoplus_{n\geq 0} \mathcal{G}_n$.
\end{definition}

\section{A reformulation of universality for valuations}

\noindent The purpose of this short section is to introduce the basic notions that plays an instrumental role throughout the proof of our main results. 

\begin{definition}
    Let $f:\mathcal{M}_{n,k}\to A$ be a valuative invariant, and let $\mathcal{C}_{n,k}\subseteq\mathcal{M}_{n,k}$ be a subclass of matroids on $n$ elements and rank $k$. Consider the $\mathbb{Z}$-submodule of $A$ spanned by all the elements $f(\M)$ for $\M\in \mathcal{C}_{n,k}$. The (torsion-free) rank of this $\mathbb{Z}$-module is called the \emph{$f$-rank} of $\mathcal{C}_{n,k}$.
\end{definition}

Typically, $A$ is the ring of integers, or a ring of polynomials over the integers. In these cases, $A$ is also a free abelian group.

\begin{example}\label{example:tuttes42}
    Consider $\mathcal{C}_{4,2}=\mathcal{M}_{4,2}$ all matroids on four elements and rank two. In other words, $\mathcal{C}_{4,2}$ consists of all possible relabellings of the seven isomorphism classes of the matroids $\U_{0,2}\oplus\U_{2,2}$, $\U_{0,1}\oplus\U_{1,2}\oplus\U_{1,1}$, $\U_{1,3}\oplus \U_{1,1}$, $\U_{0,1}\oplus \U_{2,3}$, $\U_{1,2}\oplus \U_{1,2}$, $\mathsf{T}_{2,4}$, and $\U_{2,4}$.  Their Tutte polynomials are
    \begin{align*}
        T(\U_{2,4}) &=x^{2} + y^{2} + 2 x + 2 y, &
        T(\mathsf{T}_{2,4}) &=x^{2} + x y + y^{2} + x + y,\\
        T(\U_{1,2}\oplus \U_{1,2}) &=x^{2} + 2 x y + y^{2}, &
        T(\U_{0,2}\oplus\U_{2,2}) &= x^{2} y^{2}\\
        T(\U_{1,3}\oplus \U_{1,1}) &=x y^{2} + x^{2} + x y, &
        T(\U_{0,1}\oplus \U_{2,3}) &=x^{2} y + x y + y^{2},\\
        T(\U_{0,1}\oplus\U_{1,2}\oplus\U_{1,1}) &= x^{2} y + x y^{2}. & &
    \end{align*}
    The $T$-rank (or ``Tutte rank'') of $\mathcal{C}_{4,2}=\mathcal{M}_{4,2}$ is therefore the rank of the $\mathbb{Z}$-span of these seven polynomials which is five (cf. Corollary~\ref{coro:T-rank-of-matroids} below). 
    This can be read off from the two relations
    \begin{align}
        2\,T(\mathsf{T}_{2,4}) &= T(\mathsf{U}_{2,4}) + T(\U_{1,2}\oplus \U_{1,2})\\
         T(\U_{2,3}\oplus \U_{0,1}) +  T(\U_{1,3}\oplus \U_{1,1}) &= T(\U_{0,1}\oplus \U_{1,2}\oplus\U_{1,1})+T(\U_{1,2}\oplus \U_{1,2}) \label{eq:relation1}
    \end{align}
    which imply also the relation
        \begin{align}\label{eq:relation2}
        2\,T(\mathsf{T}_{2,4}) + T(\U_{0,1}\oplus \U_{1,2}\oplus\U_{1,1}) &= T(\mathsf{U}_{2,4}) + T(\U_{0,1}\oplus \U_{2,3}) +  T(\U_{1,3}\oplus \U_{1,1})
    \end{align}
    and checking that the remaining polynomials are independent after eliminating $T(\mathsf{T}_{2,4})$ and $T(\U_{0,1}\oplus \U_{1,2}\oplus\U_{1,1})$.
\end{example}

\begin{example}\label{example:gs42}
    As before, consider $\mathcal{C}_{4,2}=\mathcal{M}_{4,2}$. The corresponding seven $\mathcal{G}$-invariants are
    \begin{align*}
    \mathcal{G}(\U_{0,2}\oplus\U_{2,2}) &= 4\,U_{(0, 0, 1, 1)} + 4\,U_{(0, 1, 0, 1)} + 4\,U_{(0, 1, 1, 0)} + 4\,U_{(1, 0, 0, 1)} + 4\,U_{(1, 0, 1, 0)}\\ &\quad + 4\,U_{(1, 1, 0, 0)},\\
    \mathcal{G}(\U_{0,1}\oplus\U_{1,2}\oplus\U_{1,1}) &= 2\,U_{(0, 1, 0, 1)} + 4\,U_{(0, 1, 1, 0)} + 4\,U_{(1, 0, 0, 1)} + 6\,U_{(1, 0, 1, 0)} + 8\,U_{(1, 1, 0, 0)},\\
    \mathcal{G}(\U_{1,3}\oplus \U_{1,1}) &= 6\,U_{(1, 0, 0, 1)} + 6\,U_{(1, 0, 1, 0)} + 12\,U_{(1, 1, 0, 0)},\\
    \mathcal{G}(\U_{0,1}\oplus \U_{2,3}) &= 6\,U_{(0, 1, 1, 0)} + 6\,U_{(1, 0, 1, 0)} + 12\,U_{(1, 1, 0, 0)},\\
    \mathcal{G}(\mathsf{T}_{2,4}) &= 4\,U_{(1, 0, 1, 0)} + 20\,U_{(1, 1, 0, 0)},\\
    \mathcal{G}(\U_{1,2}\oplus \U_{1,2}) &= 8\,U_{(1, 0, 1, 0)} + 16\,U_{(1, 1, 0, 0)},\\
    \mathcal{G}(\U_{2,4}) &= 24\,U_{(1, 1, 0, 0)}.
\end{align*}
    A direct computation reveals that there is a single relation among them, namely
    \begin{align*}
        2\, \mathcal{G}(\mathsf{T}_{2,4}) &= \mathcal{G}(\mathsf{U}_{2,4}) + \mathcal{G}(\U_{1,2}\oplus \U_{1,2}),
    \end{align*}
    hence the $\mathcal{G}$-rank of $\mathcal{M}_{4,2}$ is six (cf. Remark~\ref{remark:G-rank-of-matroids} below).
\end{example}

The following proposition provides a concrete way of addressing the problem of showing that a given valuative invariant is valuatively universal within a given class of matroids.

\begin{proposition}\label{prop:equivalences-universality}
    Let $\mathcal{C}$ be a class of matroids and let $u:\mathcal{M}\to A$ be a valuative invariant where $A$ is a free $\mathbb{Z}$-module. The following two conditions are equivalent.
    \begin{enumerate}[\normalfont (i)]
        \item The invariant $u$ is valuatively universal within $\mathcal{C}$.
        \item For each stratum $\mathcal{C}_{n,k}$, the $u$-rank equals the $\mathcal{G}$-rank.
    \end{enumerate}
\end{proposition}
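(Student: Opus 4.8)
The plan is to show the two implications separately, using the key fact from Derksen--Fink that $\mathcal{G}$ is universally valuative within all of $\mathcal{M}$, together with the reduction to strata that was already spelled out: a valuative invariant on $\mathcal{M}$ is the same data as a family of valuative invariants on each $\mathcal{M}_{n,k}$, and being universally valuative within $\mathcal{C}$ amounts to being so within each $\mathcal{C}_{n,k}$ separately. So fix $n,k$ throughout, and write $V = \Val(\mathcal{M}_{n,k},\mathbb{Z})$; by the universality of $\mathcal{G}$ this space is the dual $\operatorname{Hom}(P_{n,k},\mathbb{Z})$ of the free $\mathbb{Z}$-module $P_{n,k}$ spanned by the elements $\mathcal{G}(\M)$, $\M\in\mathcal{M}_{n,k}$, and every valuative invariant $f$ factors as $\widehat f\circ\mathcal{G}$ for a homomorphism $\widehat f$ out of $P_{n,k}$.

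\textbf{(i) $\Rightarrow$ (ii).} Suppose $u$ is universally valuative within $\mathcal{C}$. Apply the definition to $f=\mathcal{G}$ itself: there is a homomorphism $\widehat{\mathcal{G}}\colon A\to \mathbf{U}$ with $\mathcal{G}(\M)=\widehat{\mathcal{G}}(u(\M))$ for all $\M\in\mathcal{C}$. Restricting to the stratum $\mathcal{C}_{n,k}$, this homomorphism carries the $\mathbb{Z}$-span of $\{u(\M):\M\in\mathcal{C}_{n,k}\}$ onto the $\mathbb{Z}$-span of $\{\mathcal{G}(\M):\M\in\mathcal{C}_{n,k}\}$, so the $\mathcal{G}$-rank is at most the $u$-rank. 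For the reverse inequality, since $u$ is itself a valuative invariant it factors as $u=\widehat{u}\circ\mathcal{G}$ on $\mathcal{M}$ (by universality of $\mathcal{G}$), in particular on $\mathcal{C}_{n,k}$; hence $\widehat u$ sends the span of the $\mathcal{G}(\M)$ onto the span of the $u(\M)$, giving $u$-rank $\le$ $\mathcal{G}$-rank. Combining, the two ranks are equal on every stratum.

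\textbf{(ii) $\Rightarrow$ (i).} Assume the $u$-rank equals the $\mathcal{G}$-rank on each $\mathcal{C}_{n,k}$, and let $f\colon\mathcal{M}\to A'$ be an arbitrary valuative invariant; we must produce $\widehat f\colon A\to A'$ with $f=\widehat f\circ u$ on $\mathcal{C}$. Working one stratum at a time, let $G_{n,k}\subseteq A$ be the span of $\{u(\M):\M\in\mathcal{C}_{n,k}\}$ and consider the $\mathbb{Z}$-linear map $\varphi_{n,k}\colon G_{n,k}\to A'$ we wish to define by $u(\M)\mapsto f(\M)$. The point is that this is well defined: because $u$ and $f$ are valuative invariants, each factors through $\mathcal{G}$, say $u=\widehat u\circ\mathcal{G}$ and $f=\widehat f_0\circ\mathcal{G}$ on $\mathcal{M}_{n,k}$; and the equality of $u$-rank and $\mathcal{G}$-rank forces $\widehat u$, restricted to the span $P_{n,k}^{\mathcal{C}}$ of $\{\mathcal{G}(\M):\M\in\mathcal{C}_{n,k}\}$, to be injective modulo torsion — but since $A$ is free, injective outright after one checks there is no torsion to kill; hence any $\mathbb{Z}$-linear relation $\sum a_i\,u(\M_i)=0$ among matroids in $\mathcal{C}_{n,k}$ lifts to $\sum a_i\,\mathcal{G}(\M_i)=0$, which $\widehat f_0$ then sends to $\sum a_i\,f(\M_i)=0$. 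This makes $\varphi_{n,k}$ well defined on $G_{n,k}$; extend it arbitrarily to a homomorphism $\widehat f_{n,k}\colon A\to A'$ (possible since $G_{n,k}$ is a subgroup of the free, hence projective-friendly, module $A$ — one splits off a complement of the saturation, or more simply extends along a basis), and assemble the $\widehat f_{n,k}$ into a single $\widehat f\colon A=\bigoplus A\to A'$. By construction $f(\M)=\widehat f(u(\M))$ for all $\M\in\mathcal{C}$, so $u$ is universally valuative within $\mathcal{C}$.

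\textbf{Main obstacle.} The delicate point is the well-definedness in (ii)$\Rightarrow$(i): one must argue that equality of the two torsion-free ranks genuinely implies that every relation among the $u(\M)$ pulls back to a relation among the $\mathcal{G}(\M)$. Over $\mathbb{Q}$ this is immediate — equal dimensions of the images of a surjection from the relation-free space force the relation spaces to coincide — and since $A$ and $A'$ are free $\mathbb{Z}$-modules, a relation over $\mathbb{Z}$ is in particular one over $\mathbb{Q}$, so the rational argument suffices to define $\varphi_{n,k}$ on $G_{n,k}\otimes\mathbb{Q}$ and hence on $G_{n,k}$. The only care needed is that extending $\varphi_{n,k}$ from the subgroup $G_{n,k}$ to all of $A$ requires $G_{n,k}$ (or rather its saturation) to be a direct summand, which holds because subgroups of finitely generated free abelian groups are themselves free and one can extend a homomorphism defined on a pure subgroup; if $A$ is not finitely generated one works stratum-wise where it is, which is exactly the decomposition already provided. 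I expect this bookkeeping with $\mathbb{Z}$ versus $\mathbb{Q}$ and the splitting to be the only genuinely fiddly part; everything else is a formal consequence of the universality of $\mathcal{G}$.
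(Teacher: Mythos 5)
Your proof is correct and follows essentially the same route as the paper's: both directions rest on the universality of $\mathcal{G}$ together with the fact that a surjection between free $\mathbb{Z}$-modules of equal finite rank is an isomorphism, and your well-definedness check for $\varphi_{n,k}$ is exactly the paper's inversion of the induced epimorphism $u'_{n,k}\colon \mathbf{U}'_{n,k}\twoheadrightarrow A'$ composed with the factorization of $f$ through $\mathcal{G}$. The one loose end you flag --- extending the homomorphism from the span of $u(\mathcal{C}_{n,k})$ to all of $A$ --- is treated no more carefully in the paper, which simply declares $\widehat{\mathcal{G}}$ to be the direct sum of the maps $(u'_{n,k})^{-1}$.
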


\begin{proof}
    (i) $\Rightarrow$ (ii). Let us assume that the map $u$ is valuatively universal within $\mathcal{C}$. The Definition~\ref{def:universal-valuation} of a valuatively universal invariant applied to the valuation $f=\mathcal{G}$  reveals that there must exist a group homomorphism $\widehat{\mathcal{G}}:A\to \mathbf{U}$ such that
        \[ \mathcal{G}(\M) = \widehat{\mathcal{G}}(u(\M))\]
    for every $\M\in \mathcal{C}$. In particular the $\mathcal{G}$-rank of each stratum $\mathcal{C}_{n,k}$ is bounded from above by the $u$-rank of $\mathcal{C}_{n,k}$.
    
    On the other hand, since the $\mathcal{G}$-invariant is valuatively universal within all matroids~$\mathcal{M}$, the valuative invariant $u$ is a homomorphic image of $\mathcal{G}$, i.e., there exists a homomorphism $\widehat{u}:\mathbf{U}\to A$ such that
        \[ u(\M) = \widehat{u}(\mathcal{G}(\M))\]
    for every $\M\in\mathcal{M}$. When restricting this map to each family $\mathcal{C}_{n,k}\subseteq \mathcal{M}_{n,k}$ we see that the $u$-rank is bounded from above by the $\mathcal{G}$-rank. In particular, these two quantities must agree. 

    (ii) $\Rightarrow$ (i). 
    For the valuative invariant $u$ one may find a  homomorphism $\widehat{u}:\mathbf{U}\to A$ such that $u = \widehat{u}\circ \mathcal{G}$ as in the previous paragraph. By assumption, the $\mathbb{Z}$-submodules $\mathbf{U}_{n,k}'\subseteq \mathbf{U}$ spanned by $\mathcal{G}(\mathcal{C}_{n,k})$ and $A'\subseteq A$ spanned by $u(\mathcal{C}_{n,k})$ have the same rank.
    Since the homomorphism $\widehat{u}$ induces an epimorphism $u'_{n,k}:\mathbf{U}'_{n,k} \twoheadrightarrow A'$, and these two free $\mathbb{Z}$-modules have the same rank, by a standard result on free $\mathbb{Z}$-modules then $u'_{n,k}$ is actually an isomorphism, see for example \cite{orzech}. In particular, the inverse map $(u'_{n,k})^{-1}: A' \to \mathbf{U}'_{n,k}\subseteq \mathbf{U}$ is a well-defined homomorphism. Furthermore, it is clear that it satisfies
        \[ 
        \mathcal{G}(\M) = (u'_{n,k})^{-1}(u(\M))
        \]
    for each $\M\in\mathcal{C}_{n,k}$. 
    In other words, the $\mathcal{G}$-invariant when restricted to $\mathcal{C}_{n,k}$ factors through $u$. This holds true for every $n$ and $k$ and all matroids in $\mathcal{C}$.
    Thus we obtain the composition $\mathcal{G} = \widehat{\mathcal{G}}\circ u$ for the homomorphism $\widehat{\mathcal{G}}: A\to \mathbf{U}$ which is the direct sum of all the homomorphisms $(u'_{n,k})^{-1}$. On the other hand, every valuative invariant $f:\mathcal{M}\to X$ factors through $\mathcal{G}$. Thus, it is immediate from the above that $u$ is valuatively universal within $\mathcal{C}$, as required.
\end{proof}

Before concluding this section, let us state and prove a technical lemma that we will require in the proofs of Theorems~\ref{thm:cuspidals-basis-all}, \ref{thm:uniform-loops-coloops-basis-all} and \ref{thm:graphicschubert-basis-all}.

\begin{lemma}\label{lemma:technical-lemma}
    Let $\mathcal{C}_{n,k}$ be a class of matroids, and let $\M_1,\ldots,\M_s\in \mathcal{C}_{n,k}$ be non-isomorphic Schubert matroids. Let $u:\mathcal{M}_{n,k}\to \mathbb{Z}$ be a valuative invariant that is universal within $\mathcal{C}_{n,k}$. Assume that for every matroid $\M\in \mathcal{M}_{n,k}$ there are rational numbers $\mu_1,\ldots,\mu_s$ (depending on $\M)$ such that:
        \[ u(\M) = \sum_{i=1}^s \mu_i\cdot u(\M_i).\]
    Then for every $\M$ the numbers $\mu_i$, $1\leq i\leq s$, are in fact integers.
\end{lemma}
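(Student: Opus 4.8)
The plan is to exploit the fact that the $\mathcal{G}$-invariants of Schubert matroids have a particularly rigid, ``triangular'' structure, and then transfer the resulting integrality statement back to $u$ using the universality hypothesis. First I would invoke Proposition~\ref{prop:equivalences-universality}: since $u$ is universal within $\mathcal{C}_{n,k}$, the $u$-rank of $\mathcal{C}_{n,k}$ equals its $\mathcal{G}$-rank, and more precisely the proof of that proposition furnishes an isomorphism $u'_{n,k}$ between the $\mathbb{Z}$-span of $\mathcal{G}(\mathcal{C}_{n,k})$ and the $\mathbb{Z}$-span of $u(\mathcal{C}_{n,k})$ carrying $u(\M_i)$ to $\mathcal{G}(\M_i)$, and $u(\M)$ to $\mathcal{G}(\M)$ for every $\M \in \mathcal{C}_{n,k}$. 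Applying this isomorphism to the given rational relation $u(\M)=\sum_i \mu_i\, u(\M_i)$, I reduce to showing that if $\mathcal{G}(\M)=\sum_{i=1}^s \mu_i\, \mathcal{G}(\M_i)$ with the $\M_i$ non-isomorphic Schubert matroids, then the $\mu_i$ are integers. (Note $\M$ itself need not be Schubert or even in $\mathcal{C}_{n,k}$; only the $\M_i$ are assumed Schubert, which is what makes the argument work.)

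Next I would set up the linear algebra over the $U_s$-basis of $\mathbf{U}_{n}$. Expanding in coordinates, $\mathcal{G}(\M_i)=\sum_{s} c_{s}(\M_i)\, U_s$ where $c_s(\M)$ counts maximal flags with rank-jump sequence $s$, and similarly for $\M$. The key structural input is that a Schubert (nested) matroid is, up to relabelling the ground set, determined by its lattice of cyclic flats being a chain, and correspondingly there is a single ``$\mathcal{G}$-defining'' flag invariant of $\M_i$ — concretely, the rank-jump sequence $s(\M_i) \in \{0,1\}^n$ read off from the presentation of $\M_i$ as an ordered nested matroid (equivalently, from the lattice path / Schubert symbol) — such that the coefficient $c_{s(\M_i)}(\M_i)$ is nonzero while $c_{s(\M_i)}(\M_j) = 0$ for all Schubert $\M_j$ whose symbol is ``$\leq$'' that of $\M_i$ in the dominance order, with all $\M_j$ non-isomorphic to $\M_i$. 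In other words, after ordering $\M_1,\dots,\M_s$ so that their Schubert symbols are non-decreasing in dominance order, the matrix $\bigl(c_{s(\M_j)}(\M_i)\bigr)_{i,j}$ is (block) triangular with nonzero — indeed, I would check, $\pm$unit after a suitable normalization, or at least with controllable — diagonal entries. This is essentially the content of the cuspidal-basis computations in \cite{ferroni-schroter} and the syzygy analysis in \cite{kung-syzygies}, and I would cite the relevant statement from \cite{ferroni-schroter} that the $\mathcal{G}$-invariants of Schubert matroids of a fixed rank and size are $\mathbb{Z}$-linearly independent with an explicitly unitriangular coordinate matrix.

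With that triangular structure in hand, the integrality of the $\mu_i$ follows by a standard back-substitution. Reading off the $U_{s(\M_1)}$-coordinate of the identity $\mathcal{G}(\M)=\sum_i \mu_i \mathcal{G}(\M_i)$ (with $\M_1$ the dominance-minimal symbol) gives $c_{s(\M_1)}(\M)=\mu_1\, c_{s(\M_1)}(\M_1)$, and since $c_{s(\M_1)}(\M)\in\mathbb{Z}$ and $c_{s(\M_1)}(\M_1)=\pm1$ (the normalized unit diagonal entry), $\mu_1\in\mathbb{Z}$; then one peels off $\mu_1\mathcal{G}(\M_1)$, passes to the $U_{s(\M_2)}$-coordinate, and repeats. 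The one point needing care is the precise normalization making the diagonal entries units rather than merely nonzero — if one works with $\mathcal{G}$ directly the diagonal entries are positive integers (factorials), not $1$, so the clean back-substitution requires instead the normalized invariant $\mathcal{G}$ viewed in the basis dual to the Schubert matroids, or equivalently one should argue via the $\mathcal{G}'$-coordinates from \cite{ferroni-schroter} where unitriangularity is exact. I expect this bookkeeping — pinning down which coordinate functional to evaluate and verifying the diagonal entry is a unit — to be the main obstacle; everything else is the elementary ``integer solution of a unitriangular system is integral'' fact together with the already-proved Proposition~\ref{prop:equivalences-universality}.
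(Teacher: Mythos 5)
There is a genuine gap at the very first step, and it is fatal to the whole plan. The isomorphism $u'_{n,k}$ produced in the proof of Proposition~\ref{prop:equivalences-universality} satisfies $\mathcal{G}(\M)=(u'_{n,k})^{-1}(u(\M))$ \emph{only for $\M\in\mathcal{C}_{n,k}$}. For a general $\M\in\mathcal{M}_{n,k}\setminus\mathcal{C}_{n,k}$ you cannot transfer the relation $u(\M)=\sum_i\mu_i\,u(\M_i)$ to $\mathcal{G}(\M)=\sum_i\mu_i\,\mathcal{G}(\M_i)$; applying $(u'_{n,k})^{-1}$ to $u(\M)$ gives \emph{some} element of the $\mathbb{Q}$-span of $\mathcal{G}(\mathcal{C}_{n,k})$, but there is no reason it equals $\mathcal{G}(\M)$. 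In fact the relation you want is usually false: take $u=T$ and $\mathcal{C}$ the class of elementary split matroids with the $\M_i$ cuspidal. Then every $T(\M)$ is a rational combination of the $T(\M_i)$ (this is exactly the situation of Theorem~\ref{thm:cuspidals-basis-all}), yet the $\mathbb{Z}$-span of the $\mathcal{G}(\M_i)$ has rank $k(n-k)+1$ while the $\mathcal{G}$-rank of $\mathcal{M}_{n,k}$ is $\binom{n}{k}$ (Remark~\ref{remark:G-rank-of-matroids}), so $\mathcal{G}(\M)$ does not even lie in the span of the $\mathcal{G}(\M_i)$ for most $\M$. Since the entire point of the lemma is that $\M$ ranges over all of $\mathcal{M}_{n,k}$ and not just over $\mathcal{C}_{n,k}$, the reduction to a $\mathcal{G}$-identity cannot be repaired. (Your second half --- integrality of coefficients in a $\mathcal{G}$-expansion over Schubert matroids --- is essentially correct and follows from Derksen--Fink, but it is applied to a hypothesis you cannot establish.)

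The paper's argument factors in the opposite direction, which is what makes it work for arbitrary $\M$. For each $j$ one takes the indicator function of the isomorphism class $[\M_j]$ on Schubert matroids and uses the Derksen--Fink extension theorem \cite[Theorem~6.3]{derksen-fink} to extend it to an \emph{integer-valued} valuative invariant $F_j$ on all of $\mathcal{M}_{n,k}$. Universality of $u$ within $\mathcal{C}_{n,k}$ gives a homomorphism $\widehat{F}_j$ with $F_j=\widehat{F}_j\circ u$ on $\mathcal{C}_{n,k}$; crucially $\widehat{F}_j$ is defined on all of $A$, so it may be applied to $u(\M)$ for any $\M\in\mathcal{M}_{n,k}$. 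Applying it to both sides of $u(\M)=\sum_i\mu_i\,u(\M_i)$, the right-hand side collapses to $\mu_j$ because the $\M_i$ lie in $\mathcal{C}_{n,k}$ and are non-isomorphic Schubert matroids, while the left-hand side is the integer $\widehat{F}_j(u(\M))$. If you want to salvage your approach, replace the pullback to $\mathcal{G}$ by this pushforward to integer-valued dual functionals.
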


\begin{proof}
    Let us call $\overline{\mathcal{S}}_{n,k}$ the set of all isomorphism classes of Schubert matroids on $n$ elements and rank $k$. For each $1\leq i\leq s$ consider the function $f_i :\overline{\mathcal{S}}_{n,k}\to\mathbb{Z}$ defined by
        \[ f_i([\M]) = \begin{cases}
            1 & \text{if $[\M]=[\M_i]$}\\ 0 & \text{otherwise},
        \end{cases}\]
    where the brackets denote isomorphism classes. A result of Derksen and Fink \cite[Theorem~6.3]{derksen-fink} guarantees that each function $f_{i}$ can be extended uniquely to a valuative invariant $F_{i}:\mathcal{M}_{n,k}\to \mathbb{Z}$. When restricted to the class $\mathcal{C}_{n,k}$, each valuative invariant $F_{i}$ becomes a specialization of the invariant $u$. In other words, there are homomorphisms $\widehat{F}_{i}:\mathbb{Z}\to \mathbb{Z}$ with $F_{i}(\M) =\widehat{F}_{i}\circ u(\M)$ for every $\M\in\mathcal{C}_{n,k}$. Now, let us denote by $\phi_{i}$ the function $\widehat{F}_{i}\circ u:\mathcal{M}_{n,k}\to\mathbb{Z}$ which agrees with $F_{i}$ for matroids in $\mathcal{C}_{n,k}$ but may differ for other matroids.

    By assumption, for every matroid $\M\in\mathcal{M}_{n,r}$, we can find rational numbers $\mu_{i}$ such that
    \[ u(\M) = \sum_{i=1}^s \mu_{i}\cdot u(\M_i).\]
    By applying the homomorphism $\widehat{F}_{j}$ on both sides, we obtain:
    \[ 
    \phi_{j}(\M) = \sum_{i=1}^s\mu_{i}\cdot F_{j}(\M_i) = \sum_{i=1}^s\mu_{i}\cdot f_{j}([\M_i]) = \mu_{j}.
    \]
    In particular, we deduce that each rational coefficient $\mu_{j}$ must in fact be integral, as they are in the range of the integer-valued function $\phi_{j}$.
\end{proof}

\section{Four families for which Tutte polynomials are  universal }

\noindent In this section we describe four families of matroids for which the Tutte polynomial is a universal valuative invariant. These four families appear in the statement of our main result, Theorem~\ref{thm:main-characterization}.
Beyond the first of these four, which is the class of elementary split matroids, the remaining three classes might appear somewhat unmotivated until the reader reaches the proof of the main theorem in Section~\ref{sec:main-proof}.

In this section we prove furthermore some results that apply for relevant families of matroids beyond the aforementioned four. For instance, we calculate the $T$-rank of $\mathcal{M}_{n,k}$ explicitly, thus recovering a result by Kung \cite{kung-syzygies}. Furthermore, we answer questions posed by Kung by calculating the $T$-rank of the classes of binary and even graphic matroids.  

\subsection{Elementary split matroids and Tutte universality}

In this subsection we show that within the class $\mathcal{C}$ of all elementary split matroids the Tutte polynomial is a universal valuative invariant. To this end, the following is the first step.

\begin{proposition}\label{prop:tutte-cuspidals-independent}
    There are exactly $k(n-k)+1$ non-isomorphic cuspidal matroids on $n$ elements and rank $k$.
    Moreover, their Tutte polynomials are independent in $\mathbb{Z}[x,y]$.
\end{proposition}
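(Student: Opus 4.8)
The plan is to prove the counting statement and the linear independence statement separately. For the count, I would invoke the structure theory. A cuspidal matroid is by definition simultaneously a Schubert matroid and an elementary split matroid, so its poset of proper cyclic flats is at the same time a chain and a clutter, hence has at most one element. Feeding this into Theorem~\ref{thm:elem_split_matroids}(iv), every cuspidal matroid of rank $k$ on $[n]$ belongs to one of three families: (a) $\U_{0,\ell}\oplus\U_{k,n-\ell}$ with $0\le\ell\le n-k$; (b) $\U_{\ell,\ell}\oplus\U_{k-\ell,n-\ell}$ with $0\le\ell\le k$; or (c) a loopless, coloopless, connected matroid with a single proper cyclic flat, i.e.\ one of the matroids $\LL_{r,k,h,n}$ occurring in case (c). Members of (a) are pairwise non-isomorphic (distinguished by their number of loops), likewise within (b) (number of coloops) and within (c) (by the rank and size of the cyclic flat); and the families (a) and (b) overlap exactly in $\U_{k,n}$ and in the matroid with $k$ coloops and $n-k$ loops, and are otherwise disjoint from one another and from (c). A short count then gives $|(\mathrm a)\cup(\mathrm b)|=n$ and $|(\mathrm c)|=(k-1)(n-k-1)$, for a total of $n+(k-1)(n-k-1)=k(n-k)+1$ isomorphism classes, with the degenerate ranges $k\in\{0,1\}$ and $n-k\in\{0,1\}$ checked directly.

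For the linear independence I would first peel off the ``degenerate'' cuspidals. Every coefficient map $\M\mapsto[x^iy^j]\,T_\M(x,y)$ is a linear specialization of the Tutte polynomial, and collecting the terms of top $x$-degree in the corank--nullity expansion gives the identity $[x^k]\,T_\M(x,y)=y^{\,p}$, where $p$ is the number of loops of $\M$; dually $[y^{\,n-k}]\,T_\M(x,y)=x^{\,q}$, where $q$ is the number of coloops. Hence, among the cuspidal matroids of rank $k$ on $n$ elements, the functional $[x^ky^{\ell}]$ takes the value $1$ on $\U_{0,\ell}\oplus\U_{k,n-\ell}$ and $0$ on every other cuspidal matroid (for $1\le\ell\le n-k$), and symmetrically $[x^{\ell}y^{n-k}]$ is $1$ on $\U_{\ell,\ell}\oplus\U_{k-\ell,n-\ell}$ and $0$ on every other cuspidal matroid (for $1\le\ell\le k-1$). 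Applying these $n-1$ functionals to a hypothetical vanishing linear combination of the Tutte polynomials of all cuspidal matroids kills the coefficients attached to every cuspidal carrying a loop or a coloop, and reduces the claim to the statement that the Tutte polynomials of the $1+(k-1)(n-k-1)$ loopless, coloopless, connected cuspidal matroids---namely $\U_{k,n}$ together with the case-(c) matroids---are linearly independent.

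For this reduced statement the plan is a triangularity argument. For a case-(c) cuspidal $\LL$ with cyclic flat $Z$, put $\rho=\rk Z$ and $\sigma=|Z|$, so that $1\le\rho\le k-1$ and $\rho+1\le\sigma\le n-k+\rho-1$. The key claim is that the part of $T(\LL)$ supported in bidegrees $(i,j)$ with $i\ge 1$ and $j\ge 1$ lies inside the box $1\le i\le k-\rho$, $1\le j\le\sigma-\rho$, and has \emph{nonzero} coefficient at the corner $(k-\rho,\sigma-\rho)$. Since $(\rho,\sigma)\mapsto(k-\rho,\sigma-\rho)$ is a bijection from the set of admissible parameters onto $\{1,\dots,k-1\}\times\{1,\dots,n-k-1\}$, and since $T(\U_{k,n})$ has no monomial $x^iy^j$ with $i,j\ge 1$, one then peels off the remaining coefficients one at a time: process the monomials $x^iy^j$ ($1\le i\le k-1$, $1\le j\le n-k-1$) in any linear order refining the reverse of the coordinatewise order; at the step for $(i,j)$ the functional $[x^iy^j]$ annihilates $\U_{k,n}$ and every case-(c) matroid whose corner is not $\ge(i,j)$ coordinatewise, while every case-(c) matroid whose corner strictly dominates $(i,j)$ has already been eliminated, so only the term of the unique matroid with corner exactly $(i,j)$ survives, forcing its coefficient to vanish; at the end $c_{\U_{k,n}}\,T(\U_{k,n})=0$ forces the last coefficient to vanish as well. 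This establishes the independence, and hence the proposition.

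The hard part is precisely the boxed claim about the interior coefficients of $T(\LL_{r,k,h,n})$. I would deduce it from the relaxation identity
\[ T(\LL_{r,k,h,n}) \;=\; T(\U_{k-\rho,\,n-\sigma})\cdot T(\U_{\rho,\sigma})\;+\;\delta^{\,k,n}_{\rho,\sigma}(x,y), \]
which holds because $\LL_{r,k,h,n}$ is the relaxation of the stressed subset $Z$ in $\U_{k-\rho,\,n-\sigma}\oplus\U_{\rho,\sigma}$ and the change of the Tutte polynomial under a stressed-subset relaxation depends only on $\rho,\sigma,k,n$. Since Tutte polynomials of uniform matroids have no monomial with both exponents positive, the entire ``interior'' of $T(\LL_{r,k,h,n})$ is carried by the correction term $\delta^{k,n}_{\rho,\sigma}$, which one can write out explicitly in terms of Tutte polynomials of smaller uniform matroids; alternatively one reads the same information off the Bonin--de Mier combinatorial formula for the Tutte polynomial of a lattice path matroid, since $\LL_{r,k,h,n}$ is the single-corner lattice path matroid depicted in Figure~\ref{fig:cuspidal}. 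Controlling this correction term so as to pin down both the bounding box and the nonvanishing of the corner coefficient is where the genuine bookkeeping lies; once that is in place, everything else above is formal.
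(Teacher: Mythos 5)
Your overall architecture is sound and, at its decisive step, coincides with the paper's: a triangularity argument on the ``interior'' monomials $x^iy^j$ with $i,j\geq 1$, in which each loopless, coloopless, non-uniform cuspidal matroid contributes a distinguished corner monomial with coefficient $1$. (The paper computes the shifted polynomial $T_{\LL_{r,k,h,n}}(x+1,y+1)$ in closed form from the corank--nullity expansion and the explicit rank function of $\LL_{r,k,h,n}$, and identifies $x^ry^{n-k+r-h}$ as the unique interior monomial of maximal total degree, with coefficient $1$; under the dictionary $\rho=k-r$, $\sigma=n-h$ this is exactly your corner $(k-\rho,\sigma-\rho)$. Your preliminary steps --- the count via cyclic flats, and the elimination of the cuspidals with loops or coloops using $[x^k]\,T_\M=y^{p}$ with $p$ the number of loops, together with its dual --- are correct and a legitimate, arguably cleaner, alternative to the paper's parameter count.)

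The genuine gap is that the one claim carrying all the technical weight --- the bounding box $[1,k-\rho]\times[1,\sigma-\rho]$ for the interior of $T(\LL_{r,k,h,n})$ and the nonvanishing of its corner coefficient --- is left unproved, and the route you sketch toward it begins with a false step. You assert that, since Tutte polynomials of uniform matroids have no monomial divisible by $xy$, the entire interior of $T(\LL_{r,k,h,n})$ is carried by the correction term $\delta^{k,n}_{\rho,\sigma}$. But the \emph{product} $T(\U_{k-\rho,n-\sigma})\cdot T(\U_{\rho,\sigma})$ has plenty of interior monomials whenever both factors have positive rank and corank, since each factor then contains positive powers of both $x$ and $y$. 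Already for $\T_{k,n}=\LL_{k-1,k,k,n}$ one has $T(\U_{k-1,k})\,T(\U_{1,n-k})=\bigl(y+\sum_{i=1}^{k-1}x^i\bigr)\bigl(x+\sum_{j=1}^{n-k-1}y^j\bigr)$, whose interior is $xy+\bigl(\sum_{i=1}^{k-1}x^i\bigr)\bigl(\sum_{j=1}^{n-k-1}y^j\bigr)$, while $\delta=x+y-xy$ contributes only $-xy$: the interior is carried almost entirely by the product, not by the correction term. So you must control the interior of the product as well, which is precisely the bookkeeping you defer. The conclusion you want is true (it follows, for instance, from the paper's explicit formula for $T_{\LL_{r,k,h,n}}(x+1,y+1)$, or from the lattice-path description you mention), but as written the proof is incomplete exactly where the real content lies.
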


\begin{proof}
    Fix parameters $k$ and $n$. We have to consider only all cuspidal matroids $\LL_{r,k,h,n}$ with $0\leq r \leq h$ and $0 \leq k-r \leq n-h$. Note that the cuspidal matroid $\LL_{r,k,h,n}$ is isomorphic to the uniform matroid $\U_{k,n}$ whenever $r=0$ or $h-r=n-k$.
    Thus we are left with the $k(n-k)$ matroids $\LL_{r,k,h,n}$ where $0\leq k-r \leq k-1$ and $0\leq h-r \leq n-k-1$ as well as the uniform matroid $\U_{k,n}$.
    Next we show that their $k(n-k)+1$ Tutte polynomials are linearly independent and thus different which implies that these matroids are indeed non-isomorphic.

    Instead of the Tutte polynomials consider the polynomials $P_{r,k,h,n} (x,y) := T_{\LL_{r,k,h,n}}(x+1,y+1)$ for all $r$ and $h$ with $0\leq k-r \leq k-1$ and $0\leq h-r \leq n-k-1$. 
    It follows from the formula of the rank function of $\LL_{r,k,h,n}$ in \cite[Corollary~3.25]{ferroni-schroter} that
    \begin{align*}
        P_{r,k,h,n}(x,y) =& \sum_{A\subseteq E} x^{k-\rank A} y^{|A|-\rank A}\\
        =& \sum_{j=0}^r \binom{h}{j} \left( \sum_{i=0}^{m-1} \binom{n-h}{i} x^{r-j} y^{m-i} 
        + \sum_{i=0}^{k-r} \binom{n-h}{i} x^{k-i-j}\right)\\
        &+ \sum_{j=r+1}^h \binom{h}{j} \left( \sum_{i=0}^{k-j} \binom{n-h}{i} x^{k-i-j} 
        + \sum_{i=k-j+1}^{n-h} \binom{n-h}{i} y^{i+j-k}\right)
    \end{align*}
    where $m = n-k+r-h$.
    We see that the monomial $x^r y^{n-k+r-h}$ has coefficient~$1$ and that this monomial has maximal total degree among all monomials that are divisible by~$xy$. Moreover, 
    the Tutte polynomial $T_{\U_{k,n}}(x+1,y+1)$ has no monomial with a factor~$xy$.
    Therefore the $k(n-k)$ polynomials $P_{r,k,h,n}(x,y)$ together with $T_{\U_{k,n}}(x+1,y+1)$ are independent.
    The constant shift of the variables is an invertible linear transformation on the space of polynomials $\mathbb{Z}[x,y]$, hence we conclude that the Tutte polynomials of all non-isomorphic cuspidal matroids are independent which completes the proof.
\end{proof}

Now, we have all the ingredients to prove Theorem~\ref{thm:split-universal}.

\thmsplituniversal*

\begin{proof}
    In this prove we make use of Proposition~\ref{prop:equivalences-universality}. Let us denote by $\mathcal{C}$ the class of elementary split matroids for the moment. To show that the Tutte polynomial is universal we shall prove that the $T$-rank and the $\mathcal{G}$-rank of $\mathcal{C}_{n,k}$ are equal. In fact, we will show that they are both equal to $k(n-k)+1$. To this end, we rely on \cite[Theorem~6.6]{ferroni-schroter}. The $\mathcal{G}$-invariant of any elementary split matroid $\M\in \mathcal{C}_{n,k}$ can be calculated via:
    \begin{equation} \label{eq:g-split}
    \mathcal{G}(\M) = \mathcal{G}(\U_{k,n}) - \sum_{r,h} \lambda_{r,h} \left( \mathcal{G}(\LL_{r,k,h,n}) - \mathcal{G}(\U_{k-r,n-h}\oplus \U_{r,h})\right),
    \end{equation}
    where $\lambda_{r,h}$ records the number of stressed subsets of rank $r$, size $h$ and non-empty cusp of the matroid $\M$. In fact, by \cite[Example~6.5]{ferroni-schroter}, we have that $\mathcal{G}(\U_{k-r,n-h}\oplus \U_{r,h}) = \mathcal{G}(\LL_{k-r,k,n-h,n}) + \mathcal{G}(\LL_{r,k,h,n}) - \mathcal{G}(\U_{k,n})$. In particular, after substituting the last identity in equation~\eqref{eq:g-split}, all the matroids appearing on the right hand side are cuspidal. It follows that the $\mathcal{G}$-invariant of an elementary split matroid can be written as an integer combination of $\mathcal{G}$-invariants of cuspidal matroids. Since there are $k(n-k)+1$ non-isomorphic cuspidal matroids on $n$ elements and rank $k$, we have that the $\mathcal{G}$-rank of $\mathcal{C}_{n,k}$ is at most $k(n-k)+1$.

    On the other hand, since the map $T$ is a linear specialization of $\mathcal{G}$, the $T$-rank of $\mathcal{C}_{n,k}$ is bounded from above by the $\mathcal{G}$-rank. However, from Proposition~\ref{prop:tutte-cuspidals-independent} we know that the $T$-rank of $\mathcal{C}_{n,k}$ is at least $k(n-k)+1$, because all cuspidal matroids are elementary split and their Tutte polynomials are independent. 

    From the two paragraphs above, it follows that the $\mathcal{G}$-rank and the $T$-rank of $\mathcal{C}_{n,k}$ must be equal to $k(n-k)+1$, so the proof is complete.
\end{proof}

Reasoning as in the proof of the last statement, it is evident that the Tutte polynomial of any elementary split matroid can be written as an integer combination of Tutte polynomials of cuspidal matroids. The next result, which can be obtained as an application of the preceding theorem, shows that in fact the Tutte polynomial of \text{any} matroid can be written uniquely as a combination of Tutte polynomials of cuspidal matroids.

\cuspidalbasis*

\begin{proof}
    To prove this statement, let us show first that the $T$-rank of $\mathcal{M}_{n,k}$ is $k(n-k)+1$. The fact that the $T$-rank is at least $k(n-k)+1$ follows immediately from 
    Proposition~\ref{prop:tutte-cuspidals-independent}. To prove this is also the upper bound, note that all Tutte polynomials of matroids on $n$ elements and rank $k$ are of the form:
        \[ T_\M(x,y) = \sum_{i=0}^{k}\sum_{j=0}^{n-k} a_{ij} x^i y^j\]
    for some coefficients $a_{ij}\in\mathbb{Z}$.
    In particular, the rank of the integer span of all Tutte polynomials of matroids in $\mathcal{M}_{n,k}$ is at most $(k+1)(n-k+1)$.
    However, it is well-known that the coefficients of any Tutte polynomial of a matroid on $n$ elements and rank $k$ satisfies the $n$ linear independent relations 
    \[
        \sum_{i=0}^s \sum_{j=0}^{s-i} (-1)^j \binom{s-i}{j} a_{ij} = 0 
    \]
    for $0\leq s < n$;
    see \cite[Theorem~6.2.13]{brylawski-oxley}. In particular, the $T$-rank of $\mathcal{M}_{n,k}$ is at most $(k+1)(n-k+1) - n = k(n-k) + 1$, as we claimed. 
    
    We have two free $\mathbb{Z}$-modules, the spans of $T(\mathcal{C}_{n,k})$ and $T(\mathcal{M}_{n,k})$ both of which have the same rank, while the former is a submodule of the latter. In general, this allows to conclude that the Tutte polynomial of any matroid $\M\in \mathcal{M}_{n,k}$ can be written uniquely as an \emph{a priori rational} combination of Tutte polynomials of cuspidal matroids (see, e.g., \cite[Theorem~II.1.6]{hungerford}). The integrality follows automatically by Lemma~\ref{lemma:technical-lemma}, because cuspidal matroids are Schubert and elementary split, and $T$ is valuatively universal within the class of elementary split matroids by Theorem~\ref{thm:split-universal}.
\end{proof}

The above statement (more precisely, the first part of its proof) implies the following corollary that we will require several times below.

\begin{corollary}\label{coro:T-rank-of-matroids}
    The $T$-rank of $\mathcal{M}_{n,k}$ equals $k(n-k)+1$.
\end{corollary}

To the best of our knowledge, the last corollary was first recorded in the work of Kung \cite{kung-syzygies}, although it can be deduced from Brylawski's main result \cite{brylawski-decomposition}. In either Kung's framework or ours, to produce a proof of Theorem~\ref{thm:cuspidals-basis-all}, one needs to rely on a version of Proposition~\ref{prop:tutte-cuspidals-independent}. We refer to \cite[Section~8]{kung-syzygies} for an alternative proof of that lemma.

\begin{remark}\label{remark:G-rank-of-matroids}
    The $\mathcal{G}$-rank of $\mathcal{M}_{n,k}$ is known to be $\binom{n}{k}$; this is equivalent to the content of \cite[Theorem~1.5(a)]{derksen-fink}.
\end{remark}

\subsection{A second class of matroids containing all sparse paving matroids}\label{sec:the_class_N}

Now we focus our attention on a different class of matroids, also appearing within the statement of Theorem~\ref{thm:main-characterization}.

Let $\mathcal{N}$ denote the class of matroids that have no minor isomorphic to $\U_{1,1}\oplus\U_{1,3}$ nor $\U_{0,1}\oplus\U_{2,3}$.

\begin{lemma}\label{lem:class_N} 
    The class $\mathcal{N}$is closed under duality, it contains all sparse paving matroids, and also the following disconnected partition matroids on $n$ elements\footnote{To avoid ambiguities, we point out that in this article a \emph{partition matroid} is a matroid that is a direct sum of uniform matroids. This is more general than the definition given in \cite{oxley}, but agrees with the usage of the term, for example, in \cite{Lawler:1976}.}:
    \begin{itemize}
        \item $\U_{0,n-k-\ell}\oplus(\U_{1,2})^\ell \oplus \U_{k-\ell,k-\ell}$ for all ranks $0\leq k\leq n$ and $0\leq \ell\leq \min\{k,n-k\}$,
        \item $\U_{0,n-\ell}\oplus \U_{1,\ell}$ of rank $1$ for all $3 \leq \ell \leq n-1$
        \item $\U_{\ell-1,\ell}\oplus \U_{n-\ell,n-\ell}$ of rank $n-1$ for all $3 \leq \ell \leq n-1$
        
    \end{itemize}
\end{lemma}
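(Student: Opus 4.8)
I would verify the lemma's three assertions separately. First, for closure under duality: because $(\U_{1,1})^{\ast}\cong\U_{0,1}$ and $(\U_{1,3})^{\ast}\cong\U_{2,3}$, we have $(\U_{1,1}\oplus\U_{1,3})^{\ast}\cong\U_{0,1}\oplus\U_{2,3}$, so duality merely interchanges the two forbidden minors. Since $\N$ is (isomorphic to) a minor of $\M^{\ast}$ exactly when $\N^{\ast}$ is a minor of $\M$, a class defined by a self-dual list of excluded minors is self-dual, which establishes the duality claim. This also identifies the third family as the set of duals of the matroids in the second family, since $(\U_{0,n-\ell}\oplus\U_{1,\ell})^{\ast}\cong\U_{n-\ell,n-\ell}\oplus\U_{\ell-1,\ell}$; hence it will suffice to handle the second family.

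Next, for the sparse paving matroids, I would use the standard fact that the class of sparse paving matroids is closed under minors and duality --- or, to keep things self-contained, the characterization that $\M$ is sparse paving iff every subset of size $\rk(\M)-1$ is independent and every subset of size $\rk(\M)+1$ is spanning, two conditions visibly inherited by $\M\setminus e$ and $\M/e$. Granting this, it is enough to note that neither forbidden minor is sparse paving: both have rank $2$ on $4$ elements, and any sparse paving matroid of rank $k$ on $n$ elements with $2\le k\le n-2$ is loopless (a loop is a circuit of size $1<k$) and coloopless (a coloop is a cocircuit of size $1<n-k$), while $\U_{1,1}\oplus\U_{1,3}$ has a coloop and $\U_{0,1}\oplus\U_{2,3}$ has a loop. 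Consequently no sparse paving matroid contains either of them as a minor.

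For the three partition-matroid families, the key is that each listed matroid is a direct sum of uniform matroids, and both properties are inherited by minors (a minor of $\M_{1}\oplus\cdots\oplus\M_{t}$ is a direct sum of minors of the $\M_{i}$, and a minor of a uniform matroid is again uniform). In the first family, each connected summand of $\U_{0,n-k-\ell}\oplus(\U_{1,2})^{\ell}\oplus\U_{k-\ell,k-\ell}$ is one of $\U_{0,1},\U_{1,1},\U_{1,2}$, and taking a minor can only replace each such summand by a minor of it; since the minors of $\U_{1,2}$ are precisely $\U_{1,2},\U_{0,1},\U_{1,1}$ and the empty matroid, every connected component of every minor of a matroid in the first family has at most two elements. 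But $\U_{1,1}\oplus\U_{1,3}$ and $\U_{0,1}\oplus\U_{2,3}$ each have a three-element connected component (namely $\U_{1,3}$ and $\U_{2,3}$ respectively), so neither is a minor of a matroid in the first family. For the second family, the matroid $\U_{0,n-\ell}\oplus\U_{1,\ell}$ has rank $1$, hence all its minors have rank at most $1$, whereas both forbidden minors have rank $2$. The third family then lies in $\mathcal{N}$ by the closure under duality established above.

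I expect no genuine obstacle here: the whole proof reduces to listing the minors of a handful of small partition matroids and distinguishing them from the two forbidden minors by two coarse invariants --- the rank (equivalently, via duality, the corank) and the largest size of a connected component. The only point that calls for mild care is the bookkeeping for the first family, i.e.\ checking that ``every connected component has at most two elements'' truly survives an arbitrary sequence of deletions and contractions, which holds because it is a property of the individual uniform summands. If one declines to quote it, a further routine step is to verify that the sparse paving matroids form a minor-closed class.
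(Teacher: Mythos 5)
Your proposal is correct and follows essentially the same route as the paper: duality from the self-dual pair of excluded minors, sparse paving matroids handled by exploiting that their class is minor-closed and excludes both forbidden matroids (the paper phrases this via the excluded minors $\U_{0,1}\oplus\U_{2,2}$ and $\U_{0,2}\oplus\U_{1,1}$ of sparse paving matroids, each a proper minor of an excluded minor of $\mathcal{N}$, which is the same idea), and a direct check for the partition matroids. The only difference is that you spell out the last verification --- via the rank bound and the bound on the size of connected components of minors of the listed direct sums --- which the paper dismisses as straightforward; your bookkeeping there is sound.
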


\begin{proof}
    The two excluded minors are duals of one another, thus the class $\mathcal{N}$ is closed under duality. A matroid is sparse paving if it has no minor isomorphic to $\U_{0,1}\oplus\U_{2,2}$ or $\U_{0,2}\oplus\U_{1,1}$. Each of these two matroids is a proper minor of one of the two excluded minors of the class $\mathcal{N}$, hence $\mathcal{N}$ contains all sparse paving matroids.
    It is straightforward to see that the listed partition matroids avoid both excluded minors that define the class~$\mathcal{N}$.
\end{proof}

The preceding statement shows that all matroids having rank $0$ and $1$, as well as their duals, are encompassed by the class $\mathcal{N}$.

\begin{lemma} \label{lem:class_N2}
    Let $\M$ be a matroid lying in the class $\mathcal{N}$. Then one of the following two cases applies:
    \begin{enumerate}[\normalfont (i)]
        \item $\M$ is connected and sparse paving.
        \item $\M$ is disconnected and coincides with a partition matroid listed in Lemma~\ref{lem:class_N}.
    \end{enumerate}
\end{lemma}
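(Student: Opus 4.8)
The plan is to analyze a matroid $\M \in \mathcal{N}$ according to whether it is connected, using the excluded-minor characterization of $\mathcal{N}$ together with the fact that the two forbidden minors $\U_{1,1}\oplus\U_{1,3}$ and $\U_{0,1}\oplus\U_{2,3}$ are themselves disconnected. First I would dispose of the connected case: if $\M$ is connected and lies in $\mathcal{N}$, then in particular $\M$ has no minor isomorphic to $\U_{0,1}\oplus\U_{2,2}$ or $\U_{0,2}\oplus\U_{1,1}$, since each of these is a (proper) minor of one of the two excluded minors of $\mathcal{N}$ — this is exactly the containment already observed in the proof of Lemma~\ref{lem:class_N}. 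Hence $\M$ is sparse paving, giving case (i). The bulk of the work is therefore the disconnected case.

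So suppose $\M \in \mathcal{N}$ is disconnected, say $\M = \M_1 \oplus \M_2$ with both summands nonempty. The key observation is that being disconnected forces strong constraints on each connected component, because a connected component of a matroid in $\mathcal{N}$ together with appropriately chosen minors of the other component produces a direct sum that must still avoid the two excluded minors. Concretely, I would argue that each connected component of $\M$ must be a uniform matroid: if some component $\M_i$ were not uniform, it would contain a minor isomorphic to $\U_{1,2}\oplus\U_{1,1}$ or $\U_{0,1}\oplus\U_{1,2}$ (these are the excluded minors for the property "connected and uniform" in the relevant sense — more precisely, a connected non-uniform matroid on at least $3$ elements has a $\U_{1,3}$, $\U_{2,3}$, or $\U_{1,2}\oplus\text{loop/coloop}$ type restriction), and combining this with a loop or coloop taken as a (trivial) minor of another component reconstructs $\U_{1,1}\oplus\U_{1,3}$ or $\U_{0,1}\oplus\U_{2,3}$. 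The cleanest route is probably: since $\M$ is disconnected, write $\M = \U_{a_1,b_1}\oplus\cdots\oplus\U_{a_t,b_t}$ as a direct sum of its connected components, having first shown each component is uniform; then impose the condition that no minor is $\U_{1,1}\oplus\U_{1,3}$ or $\U_{0,1}\oplus\U_{2,3}$ and read off exactly which multisets $\{(a_i,b_i)\}$ are allowed.

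The combinatorial heart is then the case analysis on the tuple of uniform summands. A summand $\U_{a,b}$ with $b \geq 3$ and $1 \leq a \leq b-1$ already contains $\U_{1,3}$ as a minor (take any single nonloop/noncoloop restriction down to rank $1$ on $3$ elements when $a \le b-2$, or dually), and then any second summand contributing a coloop (i.e. any $\U_{a_j,b_j}$ with $a_j \ge 1$) yields the forbidden $\U_{1,1}\oplus\U_{1,3}$; symmetrically for $\U_{2,3}$ and loops. Chasing through these incompatibilities shows that the summands of rank/corank at least $2$ and size at least $3$ can only occur in very restricted configurations — essentially either there is a single "big" uniform summand $\U_{1,\ell}$ or $\U_{\ell-1,\ell}$ with all other summands being loops, coloops, or a single $\U_{1,2}$ (but not both a loop component and a $\U_{1,2}$, etc.), or all nontrivial summands are $\U_{1,2}$'s accompanied only by loops and coloops. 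Collecting the surviving possibilities reproduces exactly the three families $\U_{0,n-k-\ell}\oplus(\U_{1,2})^\ell\oplus\U_{k-\ell,k-\ell}$, $\U_{0,n-\ell}\oplus\U_{1,\ell}$, and $\U_{\ell-1,\ell}\oplus\U_{n-\ell,n-\ell}$ listed in Lemma~\ref{lem:class_N}, giving case (ii).

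The main obstacle I anticipate is making the case analysis in the disconnected case genuinely exhaustive rather than merely plausible: one must be careful that (a) every connected component really is uniform — the subtle point being small components of size $1$ or $2$ and components that are themselves uniform of full rank or corank zero — and (b) that the pairwise obstructions between summands (a "big" rank-$\geq 2$ summand forbids a coloop elsewhere, a "big" corank-$\geq 2$ summand forbids a loop elsewhere, and $\U_{1,2}$ interacts with both) are combined correctly so that no allowed configuration is overlooked and no forbidden one sneaks through. Organizing this by first extracting the number of loops $p$, the number of coloops $q$, and the list of remaining nontrivial connected uniform summands, and then doing a short finite case split on that list, should keep the argument manageable.
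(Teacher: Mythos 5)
Your treatment of the connected case contains a genuine logical error, and unfortunately the connected case is where the real content of this lemma lies. You argue that a connected $\M\in\mathcal{N}$ ``has no minor isomorphic to $\U_{0,1}\oplus\U_{2,2}$ or $\U_{0,2}\oplus\U_{1,1}$, since each of these is a (proper) minor of one of the two excluded minors of $\mathcal{N}$.'' This reverses the implication. The containment used in Lemma~\ref{lem:class_N} is: because $\U_{0,1}\oplus\U_{2,2}$ is a minor of $\U_{0,1}\oplus\U_{2,3}$, any matroid \emph{avoiding} $\U_{0,1}\oplus\U_{2,2}$ also avoids $\U_{0,1}\oplus\U_{2,3}$; that is, sparse paving $\Rightarrow$ member of $\mathcal{N}$. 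It does \emph{not} follow that a matroid avoiding $\U_{0,1}\oplus\U_{2,3}$ avoids the smaller matroid $\U_{0,1}\oplus\U_{2,2}$ --- indeed $\U_{0,1}\oplus\U_{2,2}$ itself lies in $\mathcal{N}$ and is not sparse paving (it only escapes case (i) because it is disconnected). What you must show, and what the paper actually proves, is that \emph{connectivity} forces the smaller forbidden configuration to grow into one of the excluded minors of $\mathcal{N}$: if a connected rank-$k$ matroid is not paving, take a circuit $C$ with $|C|\leq k-1$, extend $C\setminus\{e\}$ to a basis $B$, pick two elements $f,g\in B\setminus C$, and use connectedness (otherwise $f$ or $g$ would be a coloop) to find an element $h\notin B$ outside the relevant closures; contracting $B\setminus\{f,g\}$ then exhibits a $\U_{0,1}\oplus\U_{2,3}$ minor. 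Dualizing gives co-paving, hence sparse paving. None of this argument, nor any substitute for it, appears in your proposal.

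Your disconnected case is essentially sound and parallels the paper's, though organized differently: the paper observes that a component with both rank and corank at least $2$ has both $\U_{1,3}$ and $\U_{2,3}$ as minors (the only connected three-element matroids), and any second component supplies a loop or a coloop, so every component must have rank or corank at most $1$ and is therefore one of $\U_{0,1}$, $\U_{1,1}$, $\U_{1,\ell}$, $\U_{\ell-1,\ell}$; the admissible combinations are then read off. Your plan of first proving each component uniform and then case-splitting on the uniform summands reaches the same list, provided you are careful (as you note) that $\U_{b-1,b}$ does not contain $\U_{1,3}$. But as written, the proposal cannot stand without repairing the connected case.
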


\begin{proof}
    Assume $\M$ is connected, has rank $k$ and is not paving.
    Then there is a circuit of $\M$ with $|C|\leq k-1$. Choose $e\in C$ and complete the independent set $C\setminus\{e\}$ to a basis~$B$. By assumption there are two distinct elements $f, g\in B\setminus C$. Moreover, because $\M$ is connected there must be an element $h\not\in B$ such that $h\not\in \cl(C\cup\{f\})$ and $h\not\in \cl(C\cup\{g\})$ otherwise $f$ or $g$ would be a coloop.
    Now consider the contraction $\M/(B\setminus\{f,g\})$. This matroid has the loop $e$ and the circuit $\{f,g,h\}$ which certifies that $\U_{0,1}\oplus\U_{2,3}$ is a minor.
    By duality we conclude that connected matroids in $\mathcal{N}$ are sparse paving.

    For the second part of the statement note that there are only two connected matroids on three elements, namely the uniform matroids $\U_{1,3}$ and $\U_{2,3}$.
    Thus every connected component of a matroid $\M\in\mathcal{N}$ with at least three elements has one of those as a minor. Furthermore, if both the rank and corank of $\M$ are at least two then both are minors, which cannot happen as a second connected component would lead to an additional loop or coloop.
    Thus every connected component of a disconnected matroid in $\mathcal{N}$ has rank $0$,$1$, corank $0$ or corank $1$. In other words they are $\U_{0,1}$, $\U_{1,\ell}$, $\U_{1,1}$ or $\U_{\ell-1,\ell}$ for some $\ell$. The feasible combinations of those matroids are listed in Lemma~\ref{lem:class_N}.
\end{proof}

\begin{proposition} \label{prop:class_N_tutte}
    Fix $k$ and $n$ with $2\leq k \leq n-2$.
    The Tutte polynomials of the disconnected matroids in $\mathcal{N}_{n,k}$ together with $T(\U_{k,n})$ and $T(\LL_{1,k,n-k,n})$ form a basis for $T(\mathcal{N}_{n,k})$.
    In particular, whenever $2\leq k\leq n-2$, the $T$-rank of $\mathcal{N}_{n,k}$ is $\min\{k+3,n-k+3\}$.
\end{proposition}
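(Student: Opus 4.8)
The plan is to count the disconnected matroids in $\mathcal{N}_{n,k}$ explicitly, then argue linear independence of the full list and spanning separately. First I would use Lemma~\ref{lem:class_N2}: any matroid in $\mathcal{N}_{n,k}$ is either connected and sparse paving, or one of the partition matroids from Lemma~\ref{lem:class_N}. Under the hypothesis $2\leq k\leq n-2$, the rank-$1$ and corank-$1$ partition matroids on the list do not occur (those force $k=1$ or $k=n-1$), so the only disconnected members are the matroids $\U_{0,n-k-\ell}\oplus(\U_{1,2})^{\ell}\oplus\U_{k-\ell,k-\ell}$ for $0\leq \ell\leq \min\{k,n-k\}$. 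However one must be careful at the two boundary values of $\ell$: when $n-k-\ell=0$ (i.e. $\ell=n-k$) the first summand disappears and the matroid becomes $(\U_{1,2})^{n-k}\oplus\U_{2k-n,2k-n}$, and similarly when $k-\ell=0$ (i.e. $\ell=k$) the last summand disappears; also when $\ell=0$ the matroid is the \emph{connected}-component-free $\U_{0,n-k}\oplus\U_{k,k}$, which is still disconnected (as long as $0<k<n$). Counting, one gets $\min\{k,n-k\}+1$ such matroids. Adding $\U_{k,n}$ and $\LL_{1,k,n-k,n}$ gives $\min\{k,n-k\}+3 = \min\{k+3,n-k+3\}$ candidate spanning polynomials.

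Next I would establish linear independence of these Tutte polynomials. The cleanest route is again to pass to the co-rank/nullity generating function, i.e. to work with $T_\M(x+1,y+1)=\sum_{A\subseteq E}x^{k-\rk A}y^{|A|-\rk A}$, as in the proof of Proposition~\ref{prop:tutte-cuspidals-independent}. For a direct sum the Whitney-rank polynomial multiplies, so $T_{\U_{0,n-k-\ell}\oplus(\U_{1,2})^\ell\oplus\U_{k-\ell,k-\ell}}(x+1,y+1) = y^{\,n-k-\ell}(x+y)^{\ell}x^{\,k-\ell}$, and one checks these are $\mathbb{Z}$-linearly independent (they have distinct top-degree-in-$x$ behaviour after clearing the common factors; alternatively expand $(x+y)^\ell$ and read off, say, the coefficient of $x^{k}y^{n-k-\ell}$, which is $1$ for index $\ell$ and $0$ for smaller indices, giving a triangular system). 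Then I would bring in $T_{\U_{k,n}}(x+1,y+1)=\sum_{i=0}^{k}\binom{n}{i}x^{k-i}+\sum_{j=1}^{n-k}\binom{n}{k+j}y^{j}$, which has \emph{no} monomial divisible by $xy$, so it cannot be a combination of the disconnected ones unless that combination itself has no $xy$-divisible monomial — but the disconnected matroids with $\ell\geq 1$ contribute genuine mixed monomials, and the only $xy$-free disconnected one is $\ell=0$, namely $x^k y^{n-k}$, which has no pure-$x$ or pure-$y$ part; so $\U_{k,n}$ is independent of the rest. Finally $\LL_{1,k,n-k,n}$: by the formula in the proof of Proposition~\ref{prop:tutte-cuspidals-independent} (with $r=1$, $h=n-k$) its shifted Tutte polynomial contains the monomial $x^{1}y^{\,n-k+1-(n-k)}=xy$ with coefficient $1$, and this is the unique monomial of maximal total degree divisible by $xy$; since among the disconnected matroids the maximal-total-degree $xy$-divisible monomials are the leading terms of $y^{n-k-\ell}(x+y)^\ell x^{k-\ell}$, which for $\ell\geq 2$ have total degree $\geq n$ (as $k\geq 2$), a degree count isolates $\LL_{1,k,n-k,n}$ as well. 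This yields independence of all $\min\{k,n-k\}+3$ polynomials.

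For spanning, the key input is Corollary~\ref{coro:T-rank-of-matroids} is \emph{not} directly what we want here; instead I would invoke Theorem~\ref{thm:main-characterization}\eqref{it:class_sparse} together with Proposition~\ref{prop:equivalences-universality}: on the class described in \eqref{it:class_sparse} — which contains $\mathcal{N}$, or rather \emph{is} the maximal such class and contains all of $\mathcal{N}_{n,k}$ — the Tutte polynomial is universally valuative, so the $T$-rank of $\mathcal{N}_{n,k}$ equals the $\mathcal{G}$-rank of $\mathcal{N}_{n,k}$, and the latter is computable from the known $\mathcal{G}$-invariant of sparse paving matroids (a single relation of the circuit-hyperplane-relaxation type per pair, as recorded in \cite{ferroni-schroter}) plus the disconnected contributions. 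Since \textbf{this paper is organized so that Proposition~\ref{prop:class_N_tutte} is proved before the main theorem}, the honest route is the reverse: show directly that \emph{every} $\M\in\mathcal{N}_{n,k}$ has its Tutte polynomial in the $\mathbb{Q}$-span of the listed ones, and then clear denominators. Concretely, for connected sparse paving $\M$ with $\lambda_d$ stressed hyperplanes (here the stressed sets are the size-$d$ circuit-hyperplanes with $d=k$), the stressed-subset relaxation identity of \cite{ferroni-schroter} gives
\[
 T(\M) = T(\U_{k,n}) - \lambda\bigl(T(\U_{k,n}) - T(\LL_{1,k,n-k,n})\bigr) + (\text{terms supported on disconnected partition matroids}),
\]
where $\lambda$ is the number of stressed hyperplanes; more precisely the Tutte polynomial changes by a fixed polynomial per relaxation, and iterating the relaxation down to $\U_{k,n}$ while bookkeeping the intermediate non-uniform pieces expresses $T(\M)$ over our basis. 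The main obstacle — and the step I would spend the most care on — is exactly this bookkeeping: verifying that relaxing a stressed hyperplane in a sparse paving matroid (and more generally tracking the poset of stressed sets) only ever produces, at the level of Tutte polynomials, integer combinations of $T(\U_{k,n})$, $T(\LL_{1,k,n-k,n})$, and the disconnected $T\bigl(\U_{0,n-k-\ell}\oplus(\U_{1,2})^\ell\oplus\U_{k-\ell,k-\ell}\bigr)$, with no other matroid type leaking in. Once spanning is in hand, the count $\min\{k,n-k\}+3$ follows by matching it against the independence established above, and integrality of the coefficients for an arbitrary $\M\in\mathcal{N}_{n,k}$ comes from Lemma~\ref{lemma:technical-lemma} applied with the Schubert matroids $\U_{k,n}$ and $\LL_{1,k,n-k,n}$ among the spanning set (the disconnected partition matroids are not Schubert, but one only needs \emph{enough} Schubert matroids in the list for the lemma's argument, or one argues integrality directly from the relaxation recursion, which visibly has integer steps).
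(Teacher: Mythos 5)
Your outline matches the paper's structure (identify the disconnected members, prove independence of the $\min\{k,n-k\}+3$ polynomials, then prove spanning using sparse-pavingness), and your counting of the disconnected matroids is right. But the spanning step, which you yourself flag as ``the main obstacle,'' is left genuinely open, and it is the one place where your route diverges from what actually closes the argument. You do not need any relaxation bookkeeping or leakage control: by Lemma~\ref{lem:class_N2} every connected matroid in $\mathcal{N}_{n,k}$ is sparse paving, and the Tutte polynomial of a connected sparse paving matroid with $\lambda$ circuit--hyperplanes is exactly $T(\U_{k,n})-\lambda\,(x+y-xy)=(1+\lambda)\,T(\U_{k,n})-\lambda\,T(\LL_{1,k,n-k,n})$ (each circuit--hyperplane relaxation changes $T$ by precisely $xy-x-y$; this is the classical fact the paper cites from Merino et al.). In particular your displayed formula's extra ``terms supported on disconnected partition matroids'' should not be there; worse, the intermediate matroids that the general stressed-subset relaxation formula would produce, namely $\U_{k-1,k}\oplus\U_{1,n-k}$, are not even in $\mathcal{N}$, so the bookkeeping as you set it up cannot stay inside the class. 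The integer coefficients $(1+\lambda)$ and $-\lambda$ also make your appeal to Lemma~\ref{lemma:technical-lemma} unnecessary here.

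Two smaller points on independence. First, you conflate $T_\M(x,y)$ with $T_\M(x+1,y+1)$: the identity $y^{n-k-\ell}(x+y)^{\ell}x^{k-\ell}$ is the \emph{unshifted} Tutte polynomial of $\U_{0,n-k-\ell}\oplus(\U_{1,2})^\ell\oplus\U_{k-\ell,k-\ell}$, while your subsequent claims about ``no monomial divisible by $xy$'' for $\U_{k,n}$ and about the monomial $xy$ in $\LL_{1,k,n-k,n}$ refer to the shifted polynomials; mixing the two breaks the degree count (the shifted disconnected polynomials do have low-degree mixed monomials). Working entirely with unshifted Tutte polynomials your triangularity argument (coefficient of $x^ky^{n-k-\ell}$) does go through, and then $T(\U_{k,n})$ has no mixed monomial while $T(\LL_{1,k,n-k,n})-T(\U_{k,n})=xy-x-y$, which suffices. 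The paper's own independence proof is shorter still: specialize $x=y$, so the disconnected polynomials become $2^\ell x^{n-\ell}$ (distinct degrees, all at least $n-k$), and the two connected ones are separated by their linear and quadratic coefficients via the $\beta$-invariant.
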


\begin{proof}
    By Lemma~\ref{lem:class_N2}, the disconnected matroids in $\mathcal{N}_{n,k}$ are of the form $\U_{0,n-k-\ell}\oplus(\U_{1,2})^\ell \oplus \U_{k-\ell,k-\ell}$ for $0\leq \ell \leq \min\{k,n-k\}$. The Tutte polynomial of such a matroid is:
        \begin{align*}
            T(\U_{0,n-k-\ell}\oplus(\U_{1,2})^\ell \oplus \U_{k-\ell,k-\ell}) &= y^{n-k-\ell}\, (x+y)^\ell\, x^{k-\ell}.
        \end{align*}

    Without loss of generality, let us assume $k\leq n-k$. We want to prove that the $k+1$ polynomials corresponding to the disconnected matroids, together with $T(\U_{k,n})$ and $T(\LL_{1,k,n-k,n})$ form a family of $k+3$ linearly independent polynomials. To simplify the task, it suffices to show that the $k+3$ univariate polynomials obtained by specializing $x=y$ are themselves independent.
    
    The first $k+1$ polynomials read
    $x^{n-k-\ell}(x+x)^{\ell}x^{k-\ell} = 2^{\ell} x^{n-\ell}$ for $0\leq \ell\leq k$. 
    In particular, these $k+1$ polynomials have different degrees, all of which are at least $n-k$. 
    
    On the other hand, the specialization $x=y$ of $T(\U_{k,n})$ and $T(\LL_{1,k,n-k,n})$ has certainly a non vanishing linear term as both of the matroids are connected and hence their $\beta$-invariant is non-zero. Moreover, since $\LL_{1,k,n-h,n}$ differs from the uniform matroid $\U_{k,n}$ by a circuit-hyperplane relaxation, one has
    $T(\LL_{1,k,n-k,n})-T(\U_{k,n})=xy-x-y$ (see for example \cite[equation~(4)]{merino-tutte}). This results in $x^2-2x$ for the specialization $x=y$.
    By considering the linear and quadratic coefficients for those two polynomials, we conclude that all $k+3$ polynomials are linearly independent whenever $n>4$.
    Furthermore, Example~\ref{example:tuttes42} shows that this also holds for $n=4$.
    
    To see that these $k+3$ polynomials are indeed a basis, it remains to show that they span $T(\mathcal{N}_{n,k})$. Consider any matroid $\M$ in $\mathcal{N}_{n,k}$. We shall assume that it is connected, as otherwise its Tutte polynomial is already part of our linearly independent set. By Lemma~\ref{lem:class_N2} such matroid $\M$ is connected and sparse paving. 
    But a well-known property of the Tutte polynomial of a sparse paving matroid is that it is a linear combination of $T(\U_{k,n})$ and $xy-x-y = T(\LL_{1,k,n-k,n})-T(\U_{k,n})$; see for instance \cite[equation~(5)]{merino-tutte}.
     We deduce that the $T$-rank is $k+3$ whenever $2\leq k\leq n-k$, and by duality the full statement follows.
\end{proof}

\begin{proposition}\label{prop:class_N_universal}
 The Tutte polynomial is valuatively universal within the class $\mathcal{N}$.
\end{proposition}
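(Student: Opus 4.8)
My plan is to apply Proposition~\ref{prop:equivalences-universality} with $u=T$ and $\mathcal{C}=\mathcal{N}$: it suffices to check that the $T$-rank of each stratum $\mathcal{N}_{n,k}$ equals its $\mathcal{G}$-rank. Since $T$ is a linear specialization of $\mathcal{G}$, the $T$-rank of $\mathcal{N}_{n,k}$ never exceeds its $\mathcal{G}$-rank, so it is enough to bound the $\mathcal{G}$-rank from above by the $T$-rank already determined in Proposition~\ref{prop:class_N_tutte}. The degenerate strata are immediate. If $k\in\{0,n\}$ the stratum is a single matroid, and if $k\in\{1,n-1\}$ then $\mathcal{N}_{n,k}=\mathcal{M}_{n,k}$ (as noted after Lemma~\ref{lem:class_N}); in that case Corollary~\ref{coro:T-rank-of-matroids} gives $T$-rank $=k(n-k)+1=n$ while Remark~\ref{remark:G-rank-of-matroids} gives $\mathcal{G}$-rank $=\binom{n}{k}=n$, so the two agree.

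It then remains to treat $2\le k\le n-2$. By Lemma~\ref{lem:class_N2}, each matroid in $\mathcal{N}_{n,k}$ is either one of the $\min\{k,n-k\}+1$ disconnected partition matroids $\M_\ell=\U_{0,n-k-\ell}\oplus(\U_{1,2})^\ell\oplus\U_{k-\ell,k-\ell}$, $0\le\ell\le\min\{k,n-k\}$, or a connected sparse paving matroid. For the connected sparse paving matroids I plan to show that $\mathcal{G}$ takes a very constrained form. First, such a matroid is elementary split, because the excluded minor $\U_{0,1}\oplus\U_{1,2}\oplus\U_{1,1}$ of Theorem~\ref{thm:elem_split_matroids}(iii) has $\U_{0,1}\oplus\U_{2,2}$ as a minor, and the latter is excluded from the minor-closed class of sparse paving matroids. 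Second --- and this is the one genuinely structural point --- the stressed subsets of a connected sparse paving matroid with non-empty cusp are exactly its circuit-hyperplanes, each of rank $k-1$, size $k$, with cusp a single $k$-set; this follows routinely from the definitions and is implicit in \cite{ferroni-schroter}. Feeding this into the expansion~\eqref{eq:g-split}, only the summand with $(r,h)=(k-1,k)$ survives, so if $\M$ has $t$ circuit-hyperplanes then
\[
    \mathcal{G}(\M)=\mathcal{G}(\U_{k,n})-t\bigl(\mathcal{G}(\LL_{k-1,k,k,n})-\mathcal{G}(\U_{1,n-k}\oplus\U_{k-1,k})\bigr);
\]
substituting the identity $\mathcal{G}(\U_{1,n-k}\oplus\U_{k-1,k})=\mathcal{G}(\LL_{1,k,n-k,n})+\mathcal{G}(\LL_{k-1,k,k,n})-\mathcal{G}(\U_{k,n})$ of \cite[Example~6.5]{ferroni-schroter} turns this into $\mathcal{G}(\M)=(1-t)\,\mathcal{G}(\U_{k,n})+t\,\mathcal{G}(\LL_{1,k,n-k,n})$. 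Thus every connected sparse paving matroid in $\mathcal{N}_{n,k}$ has its $\mathcal{G}$-invariant in the rank-$\le2$ submodule spanned by $\mathcal{G}(\U_{k,n})$ and $\mathcal{G}(\LL_{1,k,n-k,n})$.

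Putting the two cases together, $\mathcal{G}(\mathcal{N}_{n,k})$ lies in the $\mathbb{Z}$-span of the $\min\{k,n-k\}+1$ elements $\mathcal{G}(\M_\ell)$ together with $\mathcal{G}(\U_{k,n})$ and $\mathcal{G}(\LL_{1,k,n-k,n})$, so the $\mathcal{G}$-rank of $\mathcal{N}_{n,k}$ is at most $\min\{k+3,\,n-k+3\}$; by Proposition~\ref{prop:class_N_tutte} this is exactly the $T$-rank. Combined with the automatic reverse inequality, the $T$-rank and the $\mathcal{G}$-rank coincide on every stratum, and Proposition~\ref{prop:equivalences-universality} yields that the Tutte polynomial is universally valuative within $\mathcal{N}$. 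The only step that I expect to require a real (though short) argument is the structural claim that a connected sparse paving matroid has no stressed subset with non-empty cusp besides its circuit-hyperplanes: this is precisely what collapses~\eqref{eq:g-split} to a two-parameter family and makes the $\mathcal{G}$-side count match the Tutte-side count of Proposition~\ref{prop:class_N_tutte}, while the rest is bookkeeping.
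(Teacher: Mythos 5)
Your proof is correct and follows essentially the same route as the paper: bound the $\mathcal{G}$-rank of $\mathcal{N}_{n,k}$ above by $\min\{k+3,n-k+3\}$ using the two-dimensional span of $\mathcal{G}$ on connected sparse paving matroids plus the count of disconnected partition matroids, then invoke Proposition~\ref{prop:equivalences-universality}. The only difference is that where you re-derive the identity $\mathcal{G}(\M)=(1-t)\,\mathcal{G}(\U_{k,n})+t\,\mathcal{G}(\LL_{1,k,n-k,n})$ from equation~\eqref{eq:g-split} (correctly, modulo the structural claim about stressed subsets that you rightly flag), the paper simply cites \cite[Corollary~6.7 \& Example~6.5]{ferroni-schroter} for the same fact.
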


\begin{proof}
    We show that the $T$-rank and $\mathcal{G}$-rank of the family $\mathcal{N}_{n,k}$ agree.
    The family $\mathcal{N}_{n,k}$ includes all matroids of rank $k$ on $n$ elements whenever $k\in\{0,1,n-1,n\}$. In particular, the $T$-ranks of 
    $\mathcal{N}_{0,n}=\mathcal{M}_{0,n}$,
    $\mathcal{N}_{1,n}=\mathcal{M}_{1,n}$,
    $\mathcal{N}_{n-1,n}=\mathcal{M}_{n-1,n}$, and
    $\mathcal{N}_{n,n}=\mathcal{M}_{n,n}$ agree with the corresponding $\mathcal{G}$-ranks (this is straightforward to see, but as an alternative we refer to Corollary~\ref{coro:T-rank-of-matroids} and Remark~\ref{remark:G-rank-of-matroids}).
    
    It remains to show that the $\mathcal{G}$-rank of $\mathcal{N}_{n,k}$ for $2\leq k\leq n-2$ is bounded from above by $\min\{k+3,n-k+3\}$, as the  $\mathcal{G}$-rank of $\mathcal{N}_{n,k}$ is bound from below by the $T$-rank of $\mathcal{N}_{n,k}$ which is $\min\{k+3,n-k+3\}$ by Proposition~\ref{prop:class_N_tutte}.
    
    By \cite[Corollary~6.7 \& Example~6.5]{ferroni-schroter} we know that for every sparse paving matroid~$\M$ of rank $k$ on $n$ elements
    there is an integer $\lambda$ such that
    \[
        \mathcal{G}(\M) = (\lambda+1)\,\mathcal{G}(\U_{k,n})-\lambda\,\mathcal{G}(\LL_{1,k,n-k,n}) .
    \]
    This implies that the $\mathcal{G}$-span of the connected matroids in $\mathcal{N}_{n,k}$ has rank at most two. Since there are in total $\min\{n-k+1,k+1\}$ disconnected matroids in $\mathcal{N}_{n,k}$, the span of the $\mathcal{G}$-invariants of all matroids $\mathcal{N}_{n,k}$ has rank at most $2+\min\{n-k+1,k+1\}$. As explained above, this yields the desired equality.
\end{proof}

\begin{remark}
    We proved that for $2\leq k \leq n-2$ both the $T$-rank and $\mathcal{G}$-rank of the family $\mathcal{N}$ is $\min\{k+3,n-k+3\}$.
    Clearly $k+3 < k(n-k)+1$ whenever $n>4$, thus the free abelian group spanned by $T(\mathcal{N}_{n,k})$ has strictly smaller rank than the one spanned by $T(\mathcal{M}_{n,k})$. Similarly, the rank span of $\mathcal{G}(\mathcal{N}_{n,k})$ is strictly smaller than the span of $\mathcal{G}(\mathcal{M}_{n,k})$. In particular, it is not possible to formulate a counterpart of Theorem~\ref{thm:cuspidals-basis-all} using this family of matroids.
\end{remark}

\subsection{Two more classes of matroids}

In this subsection we consider two more families of matroids, denoted by $\mathcal{U}$ and $\mathcal{T}$. Both are defined in a similar way, $\mathcal{U}$ is the class consisting of all uniform matroids with additional loops and coloops, whereas $\mathcal{T}$ is the class of minimal matroids with additional loops and coloops. Both of these families appear within the statement of Theorem~\ref{thm:main-characterization} as well as Theorem \ref{thm:uniform-loops-coloops-basis-all} and Theorem \ref{thm:graphicschubert-basis-all}.

\subsubsection{Uniform matroids with additional loops and coloops}

A prototypical element in $\mathcal{U}_{n,k}$ is of the form $\U_{0,m}\oplus\U_{k-\ell,n-\ell-m}\oplus\U_{\ell,\ell}$. The following provides a handy characterization of the family $\mathcal{U}$.

\begin{proposition}
    A matroid $\M$ belongs to $\mathcal{U}$ if and only if $\M$ does not have minors isomorphic to $\mathsf{T}_{2,4}$ or $\U_{1,2}\oplus \U_{1,2}$.
\end{proposition}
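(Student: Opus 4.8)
The plan is to prove both implications via excluded-minor bookkeeping. For the forward direction, suppose $\M = \U_{0,m} \oplus \U_{k-\ell, n-\ell-m} \oplus \U_{\ell,\ell}$ is a uniform matroid with added loops and coloops. Any minor of $\M$ is obtained by deleting and contracting elements; deleting or contracting a loop or coloop again yields a uniform matroid with added loops and coloops, and any minor of a uniform matroid is uniform (hence of the form $\U_{a,b}$ with added loops/coloops). So every minor of $\M$ lies in $\mathcal{U}$. It therefore suffices to observe that neither $\mathsf{T}_{2,4}$ nor $\U_{1,2}\oplus\U_{1,2}$ lies in $\mathcal{U}$: the former is connected but not uniform (its lattice of cyclic flats is a chain of length two, not trivial), and the latter, being $\U_{1,2}\oplus\U_{1,2}$, is disconnected with two rank-$1$ parallel classes of size $2$, which cannot arise as $\U_{0,m}\oplus\U_{k-\ell,n-\ell-m}\oplus\U_{\ell,\ell}$ since that form permits at most one nontrivial uniform summand. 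Hence $\M$ has no such minor.

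For the converse, suppose $\M$ has no minor isomorphic to $\mathsf{T}_{2,4}$ or $\U_{1,2}\oplus\U_{1,2}$, and we must show $\M \in \mathcal{U}$. First strip off all loops and coloops: write $\M = \M' \oplus \U_{0,m}\oplus \U_{\ell,\ell}$ where $\M'$ is loopless and coloopless (possibly empty or a point). It suffices to show $\M'$ is uniform; note $\M'$ inherits the excluded-minor hypothesis. If $\M'$ is uniform we are done, so assume for contradiction that $\M'$ is a loopless, coloopless, non-uniform matroid of some rank $k'$ on $n'$ elements with $0 < k' < n'$. The goal is to extract one of the two forbidden minors. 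Since $\M'$ is not uniform, there is an $k'$-subset of the ground set that is dependent, equivalently there is a circuit $C$ with $|C| \le k'$; since $\M'$ is also coloopless, dually there is a circuit in $\M'^*$ of size $\le n'-k'$, i.e. a cocircuit of $\M'$ of size $\le n'-k'$, hence a hyperplane whose complement has size $\le n'-k'$. Using such a small circuit together with a small cocircuit, one should be able to delete and contract down to a matroid on $4$ elements of rank $2$ that is neither $\U_{2,4}$ nor uniform-with-loops/coloops; the classification of rank-$2$ matroids on $4$ elements recalled in Example~\ref{example:tuttes42} forces it to be either $\mathsf{T}_{2,4}$ or $\U_{1,2}\oplus\U_{1,2}$ (the only rank-$2$, $4$-element matroids that are loopless, coloopless, and non-uniform). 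This contradiction shows $\M'$ is uniform.

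The main obstacle is the minor-extraction step in the converse: cleanly producing one of the two forbidden minors from an arbitrary loopless, coloopless, non-uniform matroid. The delicate point is that a non-uniform loopless coloopless matroid has both a circuit of size $\le k'$ and a cocircuit of size $\le n'-k'$, and one must contract/delete so as to shrink the rank to $2$ and the corank to $2$ while keeping the relevant small circuit and cocircuit (so that the resulting $4$-element rank-$2$ matroid is loopless, coloopless, and non-uniform). One careful way to organize this: induct on $n'$, using that deleting or contracting a single well-chosen element of $\M'$ either preserves looplessness, coloopless-ness, and non-uniformity (letting the induction proceed) or already exhibits a small obstruction; alternatively, invoke directly the characterization of $\U_{1,2}\oplus\U_{1,2}$ and $\mathsf{T}_{2,4}$ as the minor-minimal loopless coloopless non-uniform matroids, which is a short finite check once the rank-$2$ base case is in hand. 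Either route reduces everything to the rank-$2$, corank-$2$ classification already displayed in Example~\ref{example:tuttes42}.
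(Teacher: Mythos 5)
Your forward direction is correct. The converse, however, contains a genuine gap --- one you flag yourself. Everything reduces to the claim that every loopless, coloopless, non-uniform matroid has a minor isomorphic to $\T_{2,4}$ or $\U_{1,2}\oplus\U_{1,2}$, and you never actually prove this. The sentence ``using such a small circuit together with a small cocircuit, one should be able to delete and contract down to a matroid on $4$ elements of rank $2$\dots'' is precisely the step that needs an argument: the small circuit and small cocircuit may intersect, the matroid may be disconnected, and the deletions and contractions needed to bring the rank and corank down to $2$ can destroy the circuit or cocircuit you are trying to preserve, or introduce loops and coloops. Your alternative route --- ``invoke directly the characterization of $\U_{1,2}\oplus\U_{1,2}$ and $\T_{2,4}$ as the minor-minimal loopless coloopless non-uniform matroids, which is a short finite check'' --- is not a finite check: establishing that these are the \emph{only} obstructions requires showing that every matroid outside the class contains one of them, which is exactly the infinite statement at issue and is essentially the proposition itself. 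As written, the converse is a plan rather than a proof.

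For comparison, the paper closes this gap by citation rather than computation: it invokes the known fact that $\T_{2,4}$ is the unique excluded minor for partition matroids (direct sums of uniform matroids), so a matroid with no $\T_{2,4}$-minor is of the form $\bigoplus_i \U_{k_i,n_i}$; forbidding $\U_{1,2}\oplus\U_{1,2}$ in addition forces at most one summand to have both positive rank and positive corank, which is exactly membership in $\mathcal{U}$. If you prefer a self-contained argument, you must supply the partition-matroid step: for instance, show that a \emph{connected} non-uniform matroid has a $\T_{2,4}$-minor (contract a circuit of size at most the rank down to a parallel pair and use connectivity to extend to a connected rank-$2$ minor on four elements with a nontrivial parallel class), and separately that two components each of positive rank and corank yield a $\U_{1,2}\oplus\U_{1,2}$-minor. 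Until one of these is carried out, the proof is incomplete.
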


\begin{proof}
    It is well known that $\mathsf{T}_{2,4}$ is the single excluded minor for the class of partition matroids as this is the smallest connected matroid with three cyclic flats. On the other hand, if one further forbids the minor $\U_{1,2}\oplus\U_{1,2}$ within the class of partition matroids, it results in the class $\mathcal{U}$ that we described.
\end{proof}

\begin{proposition}\label{prop:U-tuttes-independent}
    There are exactly $k(n-k)+1$ non-isomorphic matroids in $\mathcal{U}_{n,k}$, all of whose Tutte polynomials are independent.
\end{proposition}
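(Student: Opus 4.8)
The plan is to enumerate $\mathcal{U}_{n,k}$ explicitly and then to force linear independence of the corresponding Tutte polynomials by a direct elimination that exploits the shape of their supports. Every matroid in $\mathcal{U}$ has a unique presentation $\U_{0,m}\oplus\U_{\mathsf{r},\mathsf{s}}\oplus\U_{\ell,\ell}$ in which the core $\U_{\mathsf{r},\mathsf{s}}$ is loopless and coloopless (i.e.\ $1\le\mathsf{r}\le\mathsf{s}-1$) or empty; since the number $m$ of loops, the number $\ell$ of coloops, and the restriction to the remaining ground set are isomorphism invariants, distinct such presentations yield non-isomorphic matroids. Fixing rank $k$ and size $n$ forces $\mathsf{r}=k-\ell$ and $\mathsf{s}=n-m-\ell$; the core is genuine precisely when $0\le\ell\le k-1$ and $0\le m\le n-k-1$, which gives $k(n-k)$ matroids $M_{\ell,m}$, and the empty-core case forces $\ell=k$, $m=n-k$, contributing the single matroid $M_\infty=\U_{0,n-k}\oplus\U_{k,k}$. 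Hence there are exactly $k(n-k)+1$ isomorphism classes in $\mathcal{U}_{n,k}$.

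For the Tutte polynomials I would use multiplicativity over direct sums together with $T(\U_{0,m})=y^{m}$, $T(\U_{\ell,\ell})=x^{\ell}$ and the classical formula $T(\U_{\mathsf{r},\mathsf{s}})=\sum_{i=1}^{\mathsf{r}}\binom{\mathsf{s}-i-1}{\mathsf{s}-\mathsf{r}-1}x^{i}+\sum_{j=1}^{\mathsf{s}-\mathsf{r}}\binom{\mathsf{s}-j-1}{\mathsf{r}-1}y^{j}$ (valid for $0<\mathsf{r}<\mathsf{s}$). Substituting $\mathsf{r}=k-\ell$, $\mathsf{s}=n-m-\ell$ and re-indexing gives
\[
T(M_{\ell,m})=\sum_{I=\ell+1}^{k}\binom{n-m-I-1}{n-m-k-1}x^{I}y^{m}+\sum_{J=m+1}^{n-k}\binom{n-\ell-J-1}{k-\ell-1}x^{\ell}y^{J},
\]
all displayed binomial coefficients being strictly positive because $m\le n-k-1$ and $\ell\le k-1$, while $T(M_\infty)=x^{k}y^{n-k}$. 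Thus $T(M_{\ell,m})$ is supported exactly on the hook consisting of the horizontal arm $\{(I,m):\ell<I\le k\}$ and the vertical arm $\{(\ell,J):m<J\le n-k\}$, with the corner $(\ell,m)$ itself excluded.

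Now suppose $\sum_{\ell,m}c_{\ell,m}T(M_{\ell,m})+c_\infty T(M_\infty)=0$. The monomial $x^{k}y^{n-k}$ occurs in no $T(M_{\ell,m})$, so its coefficient forces $c_\infty=0$. The coefficient of $x^{\ell}y^{n-k}$ equals $1$ in $T(M_{\ell,m'})$ for every $m'$ and vanishes in $T(M_{\ell',m'})$ for $\ell'\ne\ell$ and in $T(M_\infty)$, so $\sum_m c_{\ell,m}=0$ for each $\ell$. Finally I would run a nested induction: downward on $m_0$ from $n-k-1$ to $0$, and within each step upward on $\ell$ from $0$ to $k-1$. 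Extracting the coefficient of $x^{\ell}y^{m_0}$ from the relation, and noting it equals $\binom{n-\ell-m_0-1}{k-\ell-1}$ on $M_{\ell,m'}$ with $m'<m_0$, equals $\binom{n-m_0-\ell-1}{n-m_0-k-1}$ on $M_{\ell',m_0}$ with $\ell'<\ell$, and is $0$ otherwise, one may combine it with the vanishing of $c_{\ell,m'}$ for $m'>m_0$ (the induction hypothesis) and with $\sum_m c_{\ell,m}=0$ to collapse the identity to
\[
\binom{n-\ell-m_0-1}{k-\ell-1}\,c_{\ell,m_0}=\binom{n-m_0-\ell-1}{n-m_0-k-1}\sum_{\ell'<\ell}c_{\ell',m_0}.
\]
Since the left-hand binomial is nonzero whenever $\ell\le k-1$ and $m_0\le n-k$, the inner induction yields $c_{\ell,m_0}=0$ for all $\ell$, and the outer induction then forces all coefficients to vanish.

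The one genuinely delicate point is this last elimination. Because each $T(M_{\ell,m})$ lives on a two-dimensional hook rather than being ``triangular'' with respect to any monomial order, no single monomial isolates one basis element, so the argument cannot be a one-pass triangularization and must instead be organized as the nested induction above, with the row relations $\sum_m c_{\ell,m}=0$ fed in as a separate ingredient. The remaining work is routine bookkeeping --- checking that the relevant binomial coefficients are nonzero in the boundary cases ($\ell=0$, $m_0=n-k-1$, or small $k$, $n-k$), and observing that in the degenerate ranges $k\in\{0,n\}$ the class $\mathcal{U}_{n,k}$ consists of a single matroid, so there is nothing to prove.
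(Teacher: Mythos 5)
Your proposal is correct. The enumeration of the $k(n-k)+1$ isomorphism classes and the explicit Tutte polynomial formula (multiplicativity over direct sums plus the standard expression for $T(\U_{\mathsf{r},\mathsf{s}})$) coincide with what the paper does; the difference lies in how the linear independence is extracted from the hook-shaped supports. The paper stratifies the matroids by $s=\ell+m$, takes a minimal $s$ occurring in a putative relation, and peels off the lowest-total-degree monomials $x^{\ell+1}y^{m}$, $x^{\ell}y^{m+1}$ of degree $s+1$; you instead first read off the row relations $\sum_{m}c_{\ell,m}=0$ from the monomials $x^{\ell}y^{n-k}$ at the top of each vertical arm and then run a nested induction on the corner monomials $x^{\ell}y^{m_0}$. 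Both are elementary coefficient-extraction arguments on the same support structure, and both are valid; your version is somewhat longer but more explicit about the point you flag, namely that within a fixed stratum a single monomial such as $x^{\ell+1}y^{m}$ is generally shared by two of the polynomials ($M_{\ell,m}$ and $M_{\ell+1,m-1}$), so no one-pass triangularization with respect to a monomial order is available and the elimination must be organized as a chain or an induction. The paper's phrasing glosses over this (its degree-$(s+1)$ elimination implicitly peels along the chain starting from an extreme monomial like $x^{0}y^{s+1}$), so your more careful bookkeeping is a welcome, if not strictly necessary, refinement rather than a different method.
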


\begin{proof}
    We begin by counting the matroids in $\mathcal{U}$ that have rank $k$ and size $n$.
    Clearly, if $\ell = k$ or $m=n-k$ we have $\U_{0,m}\oplus\U_{k-\ell,n-\ell-m}\oplus\U_{\ell,\ell} = \U_{0,n-k}\oplus\U_{k,k}$. Furthermore, for all $0\leq \ell < k$ and $0\leq m < n-k$ are the uniform matroids $\U_{k-\ell,n-\ell-m}$ distinct and of rank and corank at least one.
    Therefore we deal with $k(n-k)+1$ non-isomorphic matroids.

    The Tutte polynomial of the elements of $\mathcal{U}_{n,k}$ are given by:
    \begin{align}\label{eq:tutte-U}
        T(\U_{0,m}\oplus\U_{k-\ell,n-\ell-m}\oplus\U_{\ell,\ell}) &= 
        \sum_{i=\ell+1}^{k} \binom{n-m-i-1}{n-m-k-1} \, x^{i}y^{m}\\ 
        &\qquad\qquad + \sum_{i=m+1}^{n-k} \binom{n-\ell-i-1}{k-\ell-1}\, x^{\ell}y^{i}\nonumber
    \end{align}
    if $0\leq \ell < k$ and $0\leq m < n-k$ and $T(\U_{0,n-k}\oplus\U_{k,k}) = x^{k} y^{n-k}$.

    Notice that the Tutte polynomial of $\U_{0,n-k}\oplus \U_{k,k}$ consists of a single monomial, which in turn does not appear in any of the other Tutte polynomials. Hence, to prove our statement it suffices to check that the first $k(n-k)$ Tutte polynomials described above are independent. 
    
    Notice that the monomials of smallest total degree in the Tutte polynomial of equation~\eqref{eq:tutte-U} are $x^{\ell+1}y^m$ and $x^{\ell}y^{m+1}$. 
    For fixed values $\ell$ and $m$, among all matroids in $\mathcal{U}_{n,k}$ whose total number of loops and coloops is exactly $s=\ell+m$. Moreover there is only one matroid having a non-zero coefficient for the monomial $x^{\ell+1}y^m$.
    Thus, there are no dependencies among these polynomials.

    Now, consider any non-trivial dependency relation of the $k(n-k)$ Tutte polynomials. Let $s$ be the smallest number such that there is a Tutte polynomial of total degree $s+1$ having a non-zero coefficient in the relation. The above paragraph implies that the coefficient accompanying this Tutte polynomial has to be zero, which is a contradiction. Hence, we conclude that the $k(n-k)$ Tutte polynomials are indeed independent.
\end{proof}

For an alternative proof of the above proposition, we refer to \cite[Theorem~5.1]{kung-syzygies}.

\subsubsection{Minimal matroids with additional loops and coloops}

Now we are ready to consider the last family of matroids, $\mathcal{T}$,  consisting on all minimal matroids with additional loops and coloops. As before, a prototypical element in $\mathcal{T}_{n,k}$ is of the form $\U_{0,m}\oplus\T_{k-\ell,n-\ell-m}\oplus\U_{\ell,\ell}$. The following characterizes $\mathcal{T}$ in several equivalent ways.

\begin{proposition}\label{prop:equivalences-T}
    The following conditions are equivalent:
    \begin{enumerate}[\normalfont (i)]
        \item $\M$ is isomorphic to a matroid of the form $\U_{0,m}\oplus\T_{k-\ell,n-\ell-m}\oplus\U_{\ell,\ell}$.
        \item $\M$ has no minor isomorphic to $\U_{2,4}$ nor $\U_{1,2}\oplus\U_{1,2}$.
        \item $\M$ is binary and Schubert.
        \item $\M$ is graphic and Schubert.
    \end{enumerate}
\end{proposition}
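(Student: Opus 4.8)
The plan is to establish the cyclic chain of implications $\mathrm{(i)}\Rightarrow\mathrm{(iv)}\Rightarrow\mathrm{(iii)}\Rightarrow\mathrm{(ii)}\Rightarrow\mathrm{(i)}$, so that all four conditions become equivalent.

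\emph{The easy implications.} For $\mathrm{(i)}\Rightarrow\mathrm{(iv)}$ I would observe that minimal matroids are graphic (Figure~\ref{fig:minimal}) and that adjoining loops and coloops amounts to adding self-loops and bridges, so $\U_{0,m}\oplus\T_{k-\ell,n-\ell-m}\oplus\U_{\ell,\ell}$ is graphic; it is also Schubert because the lattice of cyclic flats of a direct sum is the poset of unions of cyclic flats of the summands, while $\U_{0,m}$ contributes only its ground set and $\U_{\ell,\ell}$ only $\varnothing$, so this lattice is a chain exactly when the lattice of cyclic flats of the cuspidal (hence Schubert) matroid $\T_{k-\ell,n-\ell-m}$ is. The implication $\mathrm{(iv)}\Rightarrow\mathrm{(iii)}$ is immediate since graphic matroids are binary. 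For $\mathrm{(iii)}\Rightarrow\mathrm{(ii)}$ I would use that a binary matroid has no $\U_{2,4}$ minor (Tutte's characterization, see~\cite{oxley}), and that $\U_{1,2}\oplus\U_{1,2}$ is not Schubert — its cyclic flats are $\varnothing$, the two $2$-element summands, and the whole set, and these do not form a chain — so, using that the class of Schubert matroids is closed under taking minors (see, e.g., \cite{bonin-demier-cyclic}), $\M$ has no $\U_{1,2}\oplus\U_{1,2}$ minor either.

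\emph{The main implication $\mathrm{(ii)}\Rightarrow\mathrm{(i)}$.} Write $\M=\M'\oplus\U_{0,m}\oplus\U_{\ell,\ell}$ with $\M'$ loopless and coloopless; then $\M'$ still avoids $\U_{2,4}$ and $\U_{1,2}\oplus\U_{1,2}$ as minors. First I would show $\M'$ is connected (the degenerate empty case being an instance of (i)): every loopless, coloopless matroid with at least one element contains a circuit $C$ with $|C|\geq 2$, and contracting $C\setminus\{e,f\}$ for two elements $e,f\in C$ produces a $\U_{1,2}$ minor, so two nontrivial loopless coloopless direct summands would yield a $\U_{1,2}\oplus\U_{1,2}$ minor, a contradiction. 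Since the cycle matroid $M(K_4)$ has a $\U_{1,2}\oplus\U_{1,2}$ minor (contract one edge of $K_4$ and delete the opposite edge), $\M'$ also has no $M(K_4)$ minor; hence, by Brylawski's excluded-minor characterization of series–parallel networks \cite{brylawski-decomposition}, $\M'$ is the cycle matroid of a $2$-connected loopless series–parallel multigraph $G$ with at least two edges. It then remains to check that $G$ is a cycle in which at most one edge has multiplicity $\geq 2$, for then $\M'\cong\T_{\rank\M',\,|E(G)|}$ and $\M$ has the form in (i). If the simple graph $\underline G$ underlying $G$ were not a cycle, it would contain a theta subgraph — two vertices joined by three internally disjoint paths, at least two of which, say $Q_1,Q_2$, have length $\geq 2$ since $\underline G$ is simple — and contracting the third path entirely together with all but two consecutive edges of each of $Q_1$ and $Q_2$ would leave a three-vertex graph consisting of two doubled edges, i.e.\ a $\U_{1,2}\oplus\U_{1,2}$ minor. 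So $\underline G=C_m$; and if $G$ had two distinct parallel classes of size $\geq 2$, deleting all edges outside these two classes and all but two copies in each would leave two doubled edges (disjoint, or sharing one vertex), again a $\U_{1,2}\oplus\U_{1,2}$ minor. Hence $G$ is a cycle with at most one multi-edge, $\M'$ is a minimal matroid, and $\mathrm{(i)}$ follows, closing the chain.

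\emph{The main obstacle.} I expect essentially all the difficulty to be in $\mathrm{(ii)}\Rightarrow\mathrm{(i)}$: the reduction to a series–parallel graph via Brylawski's theorem, and above all the explicit graph surgery showing that every $2$-connected series–parallel multigraph other than a cycle with a single multi-edge carries a $\U_{1,2}\oplus\U_{1,2}$ minor. The delicate point is to arrange the deletions and contractions so that the two ``doubled'' spots stay separated, rather than collapsing into a single parallel class (which would only produce $\U_{1,4}$).
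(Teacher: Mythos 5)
Your proof is correct, and it reaches the equivalence by a genuinely different route from the paper. The paper establishes (ii) $\iff$ (iii) directly, by combining Tutte's excluded minor $\U_{2,4}$ for binary matroids with the Oxley--Prendergast--Row list $\N_k=\trunc^{k-2}(\U_{k-1,k}\oplus\U_{k-1,k})$ of excluded minors for Schubert matroids and showing (via the scum theorem) that $\N_k$ is non-binary for $k\geq 3$, so that only $\N_2=\U_{1,2}\oplus\U_{1,2}$ survives; it then obtains (iii) $\iff$ (iv) from the fact that Schubert matroids are transversal and that graphic and binary coincide for transversal matroids, and finally proves (iv) $\Rightarrow$ (i) by an elementary graph argument. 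You instead prove only the easy direction (iii) $\Rightarrow$ (ii) (minor-closedness of Schubert matroids plus the observation that $\U_{1,2}\oplus\U_{1,2}$ is not Schubert) and concentrate all the work in (ii) $\Rightarrow$ (i), where the structural input is the excluded-minor characterization of series--parallel networks by $\U_{2,4}$ and $M(K_4)$, made applicable by your correct observation that $M(K_4)$ has a $\U_{1,2}\oplus\U_{1,2}$ minor. This trades the paper's two external inputs (the $\N_k$ list and the transversal-matroid fact) for series--parallel theory; both are standard, so the proofs are of comparable depth, and your connectivity argument for $\M'$ and the theta/parallel-class surgery are sound. Two small repairs are needed: (a) your dichotomy ``$\underline{G}$ is a cycle or contains a theta'' silently omits the case where the underlying simple graph is a single edge $K_2$, i.e.\ $G$ is one parallel class and $\M'\cong\U_{1,n'}=\T_{1,n'}$ --- the conclusion holds there, but $K_2$ is neither a cycle nor contains a theta, so the case must be stated; (b) the series--parallel excluded-minor theorem is not the result cited in this article as \cite{brylawski-decomposition} but appears in Brylawski's paper on series--parallel networks (or \cite[Corollary~12.2.14]{oxley}).
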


\begin{proof}
    (i) $\Rightarrow$ (ii) It is straightforward to check that these matroids have neither $\U_{2,4}$ nor $\U_{1,2}\oplus\U_{1,2}$ as minor.
    
    (ii) $\iff$  (iii). The only excluded minor for binary matroids is $\U_{2,4}$ while the class of Schubert matroids has infinitely many excluded minors. They are the matroids $\N_k = \trunc^{k-2}(\U_{k-1,k}\oplus\U_{k-1,k})$ for all values $k\geq 2$; see \cite[Theorem~13]{OxleyPrendergastRow:1982}.

    Let us show that $\N_k$ is not binary when $n\geq 3$. To this end, by the scum theorem it will suffice to show that $\N_k$ possesses a flat of corank $2$ contained in four or more flats of corank $1$. Since $\N_k$ is defined as the $(k-2)$-fold truncation of $\U_{k-1,k}\oplus\U_{k-1,k}$, the problem reduces to checking that the matroid $\U_{k-1,k}\oplus \U_{k-1,k}$ possesses a flat of rank $k-2$ contained in four or more flats of rank $k-1$. This is evident, because if we take any set of size $k-2$ in the first direct summand it is automatically closed (because $k\geq 3$), and adding any of the missing $2$ elements of the first summand or any of the $k$ elements of the ground set of the second direct summand we produce always a flat of rank $k-1$. In other words, we have a flat of rank $k-2$ contained in at least $2+k>4$ flats of rank $k-1$. Hence, $\N_k$ is not binary when $k\geq 3$, and therefore it contains a minor isomorphic to $\U_{2,4}$. We conclude that the excluded minors for binary Schubert matroids are precisely the uniform matroid $\U_{2,4}$ and $\N_2 = \U_{1,2}\oplus\U_{1,2}$.

    (iii) $\iff$ (iv) It is well known that Schubert matroids are transversal. Within the class of transversal matroids being graphic and being binary are equivalent conditions, see \cite[Theorem~10.4.7]{oxley}.


    (iv) $\Rightarrow$ (i). The matroid of a simple graph with two cycles has a $\U_{1,2}\oplus\U_{1,2}$ as a minor and thus is not a Schubert matroid.
    Moreover, a matroid with two distinct parallel classes has the same minor.
    In other words every connected component of a graphic Schubert matroid is a loop, coloop or minimal matroid. The direct sum of two minimal matroids is not Schubert whenever neither of them is a loop nor coloop. We conclude that all graphic Schubert matroids are of the form $\U_{0,m}\oplus\T_{k-\ell,n-\ell-m}\oplus\U_{\ell,\ell}$.
\end{proof}

\begin{proposition}\label{prop:T-tuttes-independent}
    There are exactly $k(n-k)+1$ non-isomorphic matroids in $\mathcal{T}_{n,k}$, all of whose Tutte polynomials are independent.
\end{proposition}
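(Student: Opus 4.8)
The plan is to follow the pattern of the proofs of Proposition~\ref{prop:tutte-cuspidals-independent} and Proposition~\ref{prop:U-tuttes-independent}. First I would enumerate $\mathcal{T}_{n,k}$: by Proposition~\ref{prop:equivalences-T}(i) every member is of the form $\U_{0,m}\oplus\T_{k-\ell,n-\ell-m}\oplus\U_{\ell,\ell}$, and I would separate the unique \emph{degenerate} matroid $\U_{0,n-k}\oplus\U_{k,k}$ (the case where the minimal summand collapses, i.e.\ $\ell=k$ or $m=n-k$) from the $k(n-k)$ \emph{genuine} ones, indexed by $0\le\ell\le k-1$ and $0\le m\le n-k-1$; for the latter $\T_{k-\ell,n-\ell-m}$ is an honest minimal matroid, of rank $k-\ell\ge1$ and nullity $n-k-m\ge1$. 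These $k(n-k)+1$ matroids are pairwise non-isomorphic, since the number of loops ($=m$), the number of coloops ($=\ell$), and the isomorphism type of the unique non-trivial connected component determine each one; alternatively non-isomorphism follows a posteriori from the linear independence of their Tutte polynomials.

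For the independence, write $T_{\ell,m}:=T(\U_{0,m}\oplus\T_{k-\ell,n-\ell-m}\oplus\U_{\ell,\ell})=x^{\ell}y^{m}\,T(\T_{k-\ell,n-\ell-m})$ and $T_{\star}:=T(\U_{0,n-k}\oplus\U_{k,k})=x^{k}y^{n-k}$. A deletion--contraction recursion on a parallel edge of the underlying graph of $\T_{a,b}$ (with $a\ge1$, $b-a\ge1$) gives $T(\T_{a,b})=\sum_{i=1}^{a}x^{i}+\sum_{j=1}^{b-a}y^{j}+\sum_{i=1}^{a-1}\sum_{j=1}^{b-a-1}x^{i}y^{j}$, a polynomial all of whose coefficients are $1$. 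From this I read off two facts I will use: the homogeneous component of $T(\T_{a,b})$ of least total degree is $x+y$, and $T(\T_{a,b})$ has no monomial $x^{i}y^{j}$ with $i=a$ and $j\ge1$. Multiplying by $x^{\ell}y^{m}$, the least-degree component of $T_{\ell,m}$ is $x^{\ell+1}y^{m}+x^{\ell}y^{m+1}$ (total degree $\ell+m+1$), and $x^{k}y^{n-k}$ occurs in none of the $T_{\ell,m}$.

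Now suppose $\sum_{\ell,m}c_{\ell,m}T_{\ell,m}+c_{\star}T_{\star}=0$. Reading the coefficient of $x^{k}y^{n-k}$ forces $c_{\star}=0$. If the remaining relation were non-trivial, let $s$ be the least value of $\ell+m$ over active pairs, and among those let $\ell_{0}$ be minimal, $m_{0}:=s-\ell_{0}$. I would extract the coefficient of the monomial $x^{\ell_{0}}y^{m_{0}+1}$, which has total degree $s+1$: the $T_{\ell,m}$ with $\ell+m>s$ contribute nothing, all their monomials having degree $>s+1$; for $\ell+m=s$ the degree-$(s+1)$ part of $T_{\ell,m}$ is exactly its least-degree component $x^{\ell+1}y^{m}+x^{\ell}y^{m+1}$, and a term of this equals $x^{\ell_{0}}y^{m_{0}+1}$ only when $(\ell,m)=(\ell_{0},m_{0})$ or $(\ell,m)=(\ell_{0}-1,m_{0}+1)$, the latter being inactive by minimality of $\ell_{0}$ (or simply out of range). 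Hence that coefficient equals $c_{\ell_{0},m_{0}}$, so $c_{\ell_{0},m_{0}}=0$, a contradiction; therefore the $k(n-k)+1$ Tutte polynomials are linearly independent.

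The step I expect to demand the most care is the last one: within the least-degree stratum each $T_{\ell,m}$ contributes the two monomials $x^{\ell+1}y^{m}$ and $x^{\ell}y^{m+1}$, and these overlap for consecutive parameter values, so the naive target $x^{\ell_{0}+1}y^{m_{0}}$ would also pick up a contribution from $T_{\ell_{0}+1,m_{0}-1}$. The remedy is precisely the combination ``smallest $\ell_{0}$ together with the upper monomial $x^{\ell_{0}}y^{m_{0}+1}$'', which isolates a single term; symmetrically, one could take the largest $\ell_{0}$ with the lower monomial. Everything else is routine once the explicit form of $T(\T_{a,b})$ is in hand.
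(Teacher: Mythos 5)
Your proposal is correct and follows essentially the same route as the paper: the explicit all-coefficients-one formula for $T(\T_{a,b})$ (which the paper derives via the circuit--hyperplane relaxation identity $T(\T_{k,n})=T(\U_{k-1,k})\,T(\U_{1,n-k})+x+y-xy$ rather than deletion--contraction), isolating $x^{k}y^{n-k}$ to remove $\U_{0,n-k}\oplus\U_{k,k}$, and then a lowest-total-degree argument on the remaining $k(n-k)$ polynomials. Your extra care in the last step --- choosing the minimal $\ell_{0}$ and the monomial $x^{\ell_{0}}y^{m_{0}+1}$ to avoid the overlap between the least-degree components of $T_{\ell,m}$ and $T_{\ell+1,m-1}$ --- is a welcome sharpening of the paper's terser claim that a single matroid in each stratum carries the targeted monomial.
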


\begin{proof}
    The proof of the fact that there are exactly $k(n-k)+1$ non-isomorphic matroids in $\mathcal{T}_{n,k}$ is completely analogous to the one given for $\mathcal{U}_{n,k}$ in Proposition~\ref{prop:U-tuttes-independent}.
    Recall that the minimal matroid $\mathsf{T}_{k,n}$ is the cuspidal matroid $\LL_{k-1,k,k,n}$. In particular $\mathsf{T}_{k,n}$ is the circuit-hyperplane relaxation of the matroid $\U_{k-1,k}\oplus \U_{1,n-k}$. We have that the Tutte polynomial of the minimal matroid $\mathsf{T}_{k,n}$ admits the following compact expression:
    \begin{align*} 
        T(\mathsf{T}_{k,n})  &= T(\U_{k-1,k})\cdot T(\U_{1,n-k}) + x + y - xy\\
        &= \left(y + \sum_{i=1}^{k-1} x^i\right)\left(x + \sum_{i=1}^{n-k-1} y^i\right) + x + y - xy\\
        &=  \sum_{i=1}^k x^i + \sum_{i=1}^{n-k} y^i + \left(\sum_{i=1}^{k-1} x^i\right) \left(\sum_{i=1}^{n-k-1} y^i\right).
    \end{align*}

    From here, we see that the Tutte polynomial of the matroid $\U_{0,m}\oplus \mathsf{T}_{k-\ell,n-\ell-m}\oplus \U_{\ell,\ell}$ can be written as:
    \[ \sum_{i=\ell+1}^{\ell+k} x^iy^m + \sum_{i=m+1}^{m+n-k} x^{\ell}y^i + \left(\sum_{i=\ell+1}^{\ell+k-1} x^i\right) \left(\sum_{i=m+1}^{m+n-k-1} y^i\right).\]

    The proof follows by using the exact same strategy as in the proof of Proposition~\ref{prop:U-tuttes-independent}. The only matroid in our collection for which the Tutte polynomial contains a non-zero coefficient for the monomial $x^{k}y^{n-k}$ is $\U_{0,n-k}\oplus \U_{k,k}$. So, we may focus on proving that the remaining $k(n-k)$ polynomials as above are independent. Identical reasoning to the one used before yields the desired independence.
\end{proof}

\subsubsection{Universality for the two classes $\mathcal{U}$ and $\mathcal{T}$} 

The following result is a  consequence of our counting arguments in the previous subsections.

\begin{proposition}\label{prop:U_and_T_are_univ}
    The Tutte polynomial is valuatively universal within both classes $\mathcal{U}$ and $\mathcal{T}$.
\end{proposition}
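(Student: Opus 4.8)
The plan is to deduce this from Proposition~\ref{prop:equivalences-universality}. Since the Tutte polynomial takes values in the free $\mathbb{Z}$-module $\mathbb{Z}[x,y]$, that criterion reduces the statement to checking that, for each pair $0\le k\le n$, the $T$-rank of $\mathcal{U}_{n,k}$ agrees with its $\mathcal{G}$-rank, and likewise for $\mathcal{T}_{n,k}$. I would treat both families at once, writing $\mathcal{C}$ for either $\mathcal{U}$ or $\mathcal{T}$, since the argument is identical.

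First I would pin down the $T$-rank. By Proposition~\ref{prop:U-tuttes-independent} (respectively Proposition~\ref{prop:T-tuttes-independent}), the stratum $\mathcal{C}_{n,k}$ contains exactly $k(n-k)+1$ non-isomorphic matroids whose Tutte polynomials are $\mathbb{Z}$-linearly independent. As $T$ is a matroid invariant, the $\mathbb{Z}$-span of $\{T(\M):\M\in\mathcal{C}_{n,k}\}$ is precisely the span of these independent polynomials, so the $T$-rank of $\mathcal{C}_{n,k}$ equals $k(n-k)+1$.

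Next I would bound the $\mathcal{G}$-rank from above. Again because $\mathcal{G}$ is a matroid invariant, the set $\{\mathcal{G}(\M):\M\in\mathcal{C}_{n,k}\}$ consists of only $k(n-k)+1$ vectors — one per isomorphism class — so the $\mathcal{G}$-rank of $\mathcal{C}_{n,k}$ is at most $k(n-k)+1$. Combining this with the general inequality $T\text{-rank}\le \mathcal{G}\text{-rank}$ (which holds because $T$, being valuative, factors through the universal invariant $\mathcal{G}$, so every $\mathbb{Z}$-linear relation among $\mathcal{G}$-values pushes forward to one among $T$-values — this is exactly the direction used in the proof of Proposition~\ref{prop:equivalences-universality}) gives
\[
    k(n-k)+1 \;=\; (T\text{-rank of }\mathcal{C}_{n,k}) \;\le\; (\mathcal{G}\text{-rank of }\mathcal{C}_{n,k}) \;\le\; k(n-k)+1 .
\]
Hence the $T$-rank equals the $\mathcal{G}$-rank on every stratum, and Proposition~\ref{prop:equivalences-universality} yields that $T$ is universally valuative within $\mathcal{C}$; as a byproduct one also reads off that the $\mathcal{G}$-rank of $\mathcal{U}_{n,k}$ and of $\mathcal{T}_{n,k}$ equals $k(n-k)+1$.

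I do not expect a genuine obstacle here: all the substantive work — the exact count of isomorphism classes and the linear independence of their Tutte polynomials — was already carried out in Propositions~\ref{prop:U-tuttes-independent} and \ref{prop:T-tuttes-independent}, and what remains is the elementary squeeze against the trivial bound ``(number of isomorphism classes) $\ge$ $\mathcal{G}$-rank''. The only point requiring a little care is to invoke $T\text{-rank}\le\mathcal{G}\text{-rank}$ correctly, i.e.\ via the universality of $\mathcal{G}$ rather than any special feature of the Tutte polynomial.
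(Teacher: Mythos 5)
Your proposal is correct and follows essentially the same route as the paper: establish the $T$-rank as $k(n-k)+1$ via Propositions~\ref{prop:U-tuttes-independent} and \ref{prop:T-tuttes-independent}, bound the $\mathcal{G}$-rank above by the number of isomorphism classes, and squeeze using $T\text{-rank}\le\mathcal{G}\text{-rank}$ before invoking Proposition~\ref{prop:equivalences-universality}. No gaps.
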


\begin{proof}
    For every $0\leq k \leq n$ the number of non-isomorphic matroids in $\mathcal{U}_{n,k}$ and $\mathcal{T}_{n,k}$ is the same, and this quantity equals $k(n-k)+1$. In particular, the $\mathcal{G}$-rank of $\mathcal{U}_{n,k}$ and $\mathcal{T}_{n,k}$ is bounded from above by $k(n-k)+1$.
    
    In general, the $T$-rank is at most the $\mathcal{G}$-rank, and given that the $T$-rank of both $\mathcal{U}_{n,k}$ and $\mathcal{T}_{n,k}$ is precisely $k(n-k)+1$,
    also the $\mathcal{G}$-rank of these classes must be $k(n-k)+1$. Now, by Proposition~\ref{prop:equivalences-universality} we obtain that Tutte polynomial is valuatively universal within both of these classes.
\end{proof}

The above result will be key for the proof of Theorem~\ref{thm:main-characterization}. Moreover, in the same way that Theorem~\ref{thm:split-universal} can be used to prove Theorem~\ref{thm:cuspidals-basis-all}, the preceding proposition yields the following appealing result which is a compressed reformulation of Theorem~\ref{thm:uniform-loops-coloops-basis-all} and Theorem~\ref{thm:graphicschubert-basis-all}.

\begin{mythm}{D \& E}
    The Tutte polynomial of every matroid can be written uniquely as an integer combination of Tutte polynomials of graphic Schubert matroids and also Tutte polynomials of uniform matroids with additional loops and coloops.
\end{mythm}    



\begin{proof}[Proof of Theorems~\ref{thm:uniform-loops-coloops-basis-all} \& \ref{thm:graphicschubert-basis-all}]
    We follow a similar strategy as in the proof of Theorem~\ref{thm:cuspidals-basis-all}, but with one substantial difference. Unlike the case of elementary split matroids, within the classes $\mathcal{T}$ and $\mathcal{U}$ all the matroids are Schubert. We will indicate the proof for the family $\mathcal{T}$, because the one for $\mathcal{U}$ is almost identical.
    
    First of all, since the $T$-rank of $\mathcal{T}_{n,k}$ is $k(n-k)+1$, which by Corollary~\ref{coro:T-rank-of-matroids} coincides with the $T$-rank of $\mathcal{M}_{n,k}$, we certainly obtain that the Tutte polynomial of any matroid can be obtained as a rational combination of the Tutte polynomials of matroids in $\mathcal{T}_{n,k}$. Again, we can rely on Lemma~\ref{lemma:technical-lemma} because all the matroids in $\mathcal{T}_{n,k}$ are themselves Schubert, whereas the Tutte polynomial is valuatively universal within $\mathcal{T}_{n,k}$ by Proposition~\ref{prop:U_and_T_are_univ}. We conclude the desired integrality.
\end{proof}

It is possible to prove the above theorem, as well as Theorem~\ref{thm:cuspidals-basis-all}, using brute-force, i.e., avoiding the use of the technical Lemma~\ref{lemma:technical-lemma}.

As an immediate consequence of Theorem~\ref{thm:graphicschubert-basis-all}, we are able to solve two problems proposed in \cite[Section~9]{kung-syzygies}.

\begin{corollary}\label{coro:T-rank-binary}
    Let $\mathcal{B}$ be the class of binary matroids, and $\mathscr{Gr}$ the class of graphic matroids. The $T$-rank of $\mathcal{B}_{n,k}$ and $\mathscr{Gr}_{n,k}$ is $k(n-k)+1$.
\end{corollary}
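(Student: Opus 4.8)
The plan is to sandwich the $T$-rank of $\mathcal{B}_{n,k}$ and $\mathscr{Gr}_{n,k}$ between $k(n-k)+1$ from below and $k(n-k)+1$ from above, using the two containments $\mathscr{Gr}\subseteq\mathcal{B}\subseteq\mathcal{M}$ together with the family $\mathcal{T}$ of graphic Schubert matroids studied above. First I would recall that by Proposition~\ref{prop:equivalences-T} every matroid in $\mathcal{T}$ is graphic; hence $\mathcal{T}_{n,k}\subseteq\mathscr{Gr}_{n,k}\subseteq\mathcal{B}_{n,k}\subseteq\mathcal{M}_{n,k}$. This chain of inclusions immediately gives the corresponding chain of inequalities on $T$-ranks, since passing to a larger class can only enlarge the $\mathbb{Z}$-span of the Tutte polynomials.

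For the lower bound, Proposition~\ref{prop:T-tuttes-independent} tells us that $\mathcal{T}_{n,k}$ contains exactly $k(n-k)+1$ non-isomorphic matroids whose Tutte polynomials are linearly independent in $\mathbb{Z}[x,y]$. Therefore the $T$-rank of $\mathcal{T}_{n,k}$ is at least $k(n-k)+1$, and by the inclusions above the $T$-ranks of $\mathscr{Gr}_{n,k}$ and $\mathcal{B}_{n,k}$ are also at least $k(n-k)+1$. For the upper bound, I would invoke Corollary~\ref{coro:T-rank-of-matroids}: the $T$-rank of the full class $\mathcal{M}_{n,k}$ is exactly $k(n-k)+1$, and since $\mathcal{B}_{n,k}\subseteq\mathcal{M}_{n,k}$, the $T$-rank of $\mathcal{B}_{n,k}$ (and a fortiori that of $\mathscr{Gr}_{n,k}$) is at most $k(n-k)+1$. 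Combining the two bounds forces equality throughout, so the $T$-rank of both $\mathcal{B}_{n,k}$ and $\mathscr{Gr}_{n,k}$ equals $k(n-k)+1$.

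There is essentially no obstacle here beyond assembling the ingredients already proved; the only point requiring a moment's care is the bookkeeping in the inclusion $\mathcal{T}_{n,k}\subseteq\mathscr{Gr}_{n,k}$, which rests on the equivalence (i)$\Leftrightarrow$(iv) in Proposition~\ref{prop:equivalences-T} identifying the class $\mathcal{T}$ with graphic Schubert matroids. (One could alternatively phrase the lower bound directly via Theorem~\ref{thm:graphicschubert-basis-all}: since every Tutte polynomial of a matroid in $\mathcal{M}_{n,k}$ is an integer combination of the $k(n-k)+1$ independent Tutte polynomials of graphic Schubert matroids, these polynomials span $T(\mathcal{M}_{n,k})$, and being independent they form a basis, which pins the rank of every intermediate class.) This also explains the remark in the paper that these corollaries answer Kung's questions: the graphic Schubert basis is the explicit witness realizing the rank $k(n-k)+1$ inside the small class $\mathscr{Gr}_{n,k}$.
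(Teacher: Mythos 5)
Your proof is correct and matches the paper's (implicit) argument: the paper derives this corollary directly from Theorem~\ref{thm:graphicschubert-basis-all}, i.e., from the fact that the $k(n-k)+1$ independent Tutte polynomials of graphic Schubert matroids already span $T(\mathcal{M}_{n,k})$, which is exactly your sandwich $\mathcal{T}_{n,k}\subseteq\mathscr{Gr}_{n,k}\subseteq\mathcal{B}_{n,k}\subseteq\mathcal{M}_{n,k}$ combined with Proposition~\ref{prop:T-tuttes-independent} and Corollary~\ref{coro:T-rank-of-matroids}.
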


\section{Proof of our characterization}\label{sec:main-proof}

Now we are prepared to prove our main result, Theorem~\ref{thm:main-characterization}.

\mainresult*

\begin{proof}
    Let us assume that $T$ is valuatively universal within the class $\mathcal{C}$. By the calculations in Example~\ref{example:tuttes42} and Example~\ref{example:gs42} it follows that our class $\mathcal{C}$ cannot contain all isomorphism classes of matroids on $4$ elements and rank $2$. 
    More precisely the two relations \eqref{eq:relation1}
 and \eqref{eq:relation2}, that do not have a counterpart for the universal invariant $\mathcal{G}$,
    imply that at least $\U_{0,1}\oplus \U_{1,2}\oplus \U_{1,1}$ or one of the three pairs $\{\T_{2,4},\U_{1,2}\oplus\U_{1,2}\}$, $\{\U_{2,4},\U_{1,2}\oplus\U_{1,2}\}$ or $\{\U_{0,1}\oplus\U_{2,3}, \U_{1,3}\oplus\U_{1,1}\}$ is not contained in $\mathcal{C}$.
    A~fortiori it may happen that $\mathcal{C}$ has multiple of the above or other additional excluded minors. Moreover, we want to remind the reader that the class $\mathcal{C}$ should be closed under duality, thus if $\U_{0,1}\oplus\U_{2,3}$ is excluded so must be $\U_{1,3}\oplus\U_{1,1}$ and vice versa.
    
    Let us discuss which are the four classes that result by excluding the single minor $\U_{0,1}\oplus \U_{1,2}\oplus \U_{1,1}$, or precisely one of the three listed pairs.
    
    If we exclude only $\U_{0,1}\oplus \U_{1,2}\oplus \U_{1,1}$ then we arrive at the class of all elementary split matroids by Theorem~\ref{thm:elem_split_matroids}.
    We have seen in Theorem~\ref{thm:split-universal} that this class is a family for which the Tutte polynomial is valuatively universal. We conclude that this class is inclusion-wise maximal with that property, and further that the matroid $\U_{0,1}\oplus \U_{1,2}\oplus \U_{1,1}$ is contained in every other maximal class.

    If we exclude the minimal matroid $\T_{2,4}$ and the direct sum $\U_{1,2}\oplus\U_{1,2}$ we are in the situation of Proposition~\ref{prop:U-tuttes-independent} and consider the class $\mathcal{U}$ of uniform matroids with additional loops and coloops.
    This class again is a family for which the Tutte polynomial is valuatively universal by Proposition~\ref{prop:U_and_T_are_univ}.

    Similarly, if we exclude the uniform matroid $\U_{2,4}$ and the direct sum $\U_{1,2}\oplus\U_{1,2}$ we arrive at the class $ \mathcal{T}$ of minimal matroids with additional loops and coloops. Proposition~\ref{prop:U_and_T_are_univ} again reveals that this is a class for which the Tutte polynomial is valuatively universal.

    The last class we have to consider is the class $\mathcal{N}$ we studied in Section~\ref{sec:the_class_N}.
    Theorem~\ref{prop:class_N_universal} tells us that within this class the Tutte polynomial is valuatively universal, too. Among the duality closed classes, the class $\mathcal{N}$ is clearly maximal with that property.

    Notice that every class $\mathcal{C}$ closed under minors and duals and for which the Tutte polynomial is valuatively universal has to be a subclass of one of the four listed. However, the four listed class are indeed valid. Hence, we conclude that they are maximal and moreover there are no other maximal classes.
\end{proof}

\section{Final remarks and some open problems}\label{sec:open_problems}

\subsection{Analogs for other valuative invariants}

Throughout this article we have focused on the Tutte polynomial, motivated by its ubiquity and remarkable features. However, there are many other valuative invariants for which one could ask for a counterpart of our Theorem~\ref{thm:main-characterization} or find an invariant for a given class of matroids whose $\mathcal{G}$-invariants span a subgroup. 

\begin{problem}
    Prove counterparts of Theorem~\ref{thm:main-characterization} for other valuative invariants.
\end{problem}

As we have seen, the most interesting class for which the map $T$ is a universal valuative invariant is precisely that of elementary split matroids. It is tempting to ask whether other valuative invariants, for example the spectrum polynomial \cite{kook-reiner-stanton}, may lead to further interesting classes of matroids. The valuativity of the spectrum polynomial was established in \cite[Corollary~9.26]{ferroni-schroter}. We mention this particular invariant for two reasons: i) it is known that it is neither a specialization of the Tutte polynomial nor does it specialize to the Tutte polynomial, and ii) the spectrum polynomial of a matroid $\M\in\mathcal{M}_{n,k}$ contains at most $nk$ non-zero coefficients (in particular, it contains much less data than the $\mathcal{G}$-invariant, which may contain up to $\binom{n}{k}$ non-zero coefficients).

\subsection{Dropping the assumption on dual-closedness}

On a separate note, a shrewd reader may ask if it is possible to drop the condition on the class $\mathcal{C}$ being closed under duality in the statement of Theorem~\ref{thm:main-characterization}. This is a question that deserves to be addressed.

\begin{problem}
    Find a counterpart of Theorem~\ref{thm:main-characterization} dropping the assumption on $\mathcal{C}$ being closed under duality.
\end{problem}

Although we believe the structure of a possible proof could be more or less similar to what we have done in this paper, we expect that it will be necessary to deal with a huge number of cases. For instance, if one takes $\mathcal{C}_{4,2}=\mathcal{M}_{4,2}\smallsetminus\{\U_{0,1}\oplus \U_{2,3}\}$, and similarly if one only removes the dual of that matroid, the $\mathcal{G}$-rank and the $T$-rank of $\mathcal{C}_{4,2}$ coincide. However, if one considers the $\mathcal{G}$-rank and $T$-rank of the matroids in $\mathcal{M}_{5,2}$ that avoid the minor $\U_{0,1}\oplus \U_{2,3}$, there are $10$ maximal subsets of $\mathcal{M}_{5,2}$ for which the $\mathcal{G}$-rank equals the $T$-rank. From there one can only hope that the subsequent ramification into further cases eventually collapses, as it indeed happened in the main proof of the present article. Moreover, it is unclear if the maximal classes would only have a \emph{finite} number of excluded minors.

\begin{question}
    Let $\mathcal{C}$ be an inclusion-wise maximal minor-closed family of matroids for which the Tutte polynomial is valuatively universal. Does $\mathcal{C}$ have a finite list of forbidden minors?
\end{question}

\subsection{The graphic Schubert coefficients}

By virtue of Theorem~\ref{thm:graphicschubert-basis-all}, we know that the Tutte polynomial of every matroid can be calculated uniquely as a signed integer sum of Tutte polynomials of very special graphs. Is it possible to interpret these coefficients in some meaningful or combinatorial way?

\begin{problem}
    Understand the coefficients of the decomposition of Theorem~\ref{thm:graphicschubert-basis-all}.
\end{problem}

It is also reasonable to formulate analogous questions for the basis of cuspidal matroids, or the basis of uniform matroids with additional loops and coloops. 

\subsection{Tutte ranks for combinatorial geometries}

As we have seen in Corollary~\ref{coro:T-rank-of-matroids}, the $T$-rank of $\mathcal{M}_{n,k}$ is $k(n-k)+1$. If we consider the class of \emph{simple} matroids (also known as combinatorial geometries) on $n$ elements and rank $k$, it is still possible to calculate the $T$-rank using a similar proof. Precisely, one obtains that this equals $(k-2)(n-k)+1$ for $2\leq k\leq n$. An intriguing open question suggested to us by Kung in a private communication is to compute the $T$-rank of the class of simple graphic (or binary) matroids on $n$ elements and rank $k$.

\begin{problem}\label{problem:simple-binary-T-rank}
    Determine the $T$-rank for the class of graphic and binary simple matroids on $n$ elements and rank $k$.
\end{problem}

Unfortunately, our proof for Corollary~\ref{coro:T-rank-binary} does not extend, and it does not seem to be straightforward to adapt. Moreover, our computer experiments of calculating these $T$-ranks do not reveal useful patterns to us for small values of $k$ and $n$.  

\subsection{Some remarks on Tutte polynomials of elementary split matroids}

Before ending, let us digress about the behavior of Tutte polynomials for elementary split matroids, in contrast to their behavior for arbitrary matroids. Although Theorem~\ref{thm:cuspidals-basis-all} says that the Tutte polynomial of any matroid $\M$ can be written uniquely as a combination of Tutte polynomials of cuspidal matroids, it remains broadly open to understand better the coefficients of this linear combination. On the other hand, in our previous work \cite[Example~7.21]{ferroni-schroter} we explained that Tutte polynomials cannot identify whether a given matroid is or is not elementary split. Despite these observations, the reader should not be misled to think that Tutte polynomials of arbitrary matroids and elementary split matroids are ``equally complicated''. A remarkable instance of the difference in complexity is the fact that for elementary split matroids we have been able to establish the Merino--Welsh conjecture \cite{ferroni-schroter-mw}, i.e., we proved that for $\M$ elementary split, loopless and coloopless, the inequality $T_{\M}(2,0)\,T_{\M}(0,2)\geq T_{\M}(1,1)^2$ holds. However, a recent breakthrough by Beke, Cs\'aji, Csikv\'ari and Pituk \cite{beke-csaji-csikvari-pituk} establishes the existence of a loopless and coloopless matroid (which is not elementary split) not satisfying the inequality predicted by Merino and Welsh. 

\section*{Acknowledgements}
\noindent We want to acknowledge insightful exchanges with Joseph Kung, to whom we thank for his comments on an earlier version of this manuscript. In particular, Problem~\ref{problem:simple-binary-T-rank} was incorporated upon his suggestion. We are also thankful to Chris Eppolito for a careful reading and useful comments that improved the exposition of this article.

\bibliographystyle{amsalpha}
\bibliography{bibliography}

\newcommand{\etalchar}[1]{$^{#1}$}
\providecommand{\bysame}{\leavevmode\hbox to3em{\hrulefill}\thinspace}
\providecommand{\MR}{\relax\ifhmode\unskip\space\fi MR }
\providecommand{\MRhref}[2]{%
  \href{http://www.ams.org/mathscinet-getitem?mr=#1}{#2}
}
\providecommand{\href}[2]{#2}
\begin{thebibliography}{MRInRS12}

\bibitem[AFR10]{ardila-fink-rincon}
Federico Ardila, Alex Fink, and Felipe Rinc\'{o}n, \emph{Valuations for matroid polytope subdivisions}, Canad. J. Math. \textbf{62} (2010), no.~6, 1228--1245. \MR{2760656}

\bibitem[AS23]{ardila-sanchez}
Federico Ardila and Mario Sanchez, \emph{Valuations and the {H}opf {M}onoid of {G}eneralized {P}ermutahedra}, Int. Math. Res. Not. IMRN (2023), no.~5, 4149--4224. \MR{4565665}

\bibitem[BCCP24]{beke-csaji-csikvari-pituk}
Csongor Beke, Gergely~K\'{a}l Cs\'{a}ji, P\'{e}ter Csikv\'{a}ri, and S\'{a}ra Pituk, \emph{The {M}erino--{W}elsh conjecture is false for matroids}, Adv. Math. \textbf{446} (2024), Paper No. 109674. \MR{4737143}

\bibitem[BdM06]{bonin-demier-structural}
Joseph~E. Bonin and Anna de~Mier, \emph{Lattice path matroids: structural properties}, European J. Combin. \textbf{27} (2006), no.~5, 701--738. \MR{2215428}

\bibitem[BdM08]{bonin-demier-cyclic}
\bysame, \emph{The lattice of cyclic flats of a matroid}, Ann. Comb. \textbf{12} (2008), no.~2, 155--170. \MR{2428902}

\bibitem[BK18]{bonin-kung}
Joseph~E. Bonin and Joseph P.~S. Kung, \emph{The {$\mathcal{G}$}-invariant and catenary data of a matroid}, Adv. in Appl. Math. \textbf{94} (2018), 39--70. \MR{3739496}

\bibitem[BKS{\etalchar{+}}23]{berczi}
Krist\'{o}f B\'{e}rczi, Tam\'{a}s Kir\'{a}ly, Tam\'{a}s Schwarcz, Yutaro Yamaguchi, and Yu~Yokoi, \emph{Hypergraph characterization of split matroids}, J. Combin. Theory Ser. A \textbf{194} (2023), Paper No. 105697. \MR{4499860}

\bibitem[BO92]{brylawski-oxley}
Thomas Brylawski and James Oxley, \emph{The {T}utte polynomial and its applications}, Matroid applications, Encyclopedia Math. Appl., vol.~40, Cambridge Univ. Press, Cambridge, 1992, pp.~123--225. \MR{1165543}

\bibitem[Bry72]{brylawski-decomposition}
Thomas~H. Brylawski, \emph{A decomposition for combinatorial geometries}, Trans. Amer. Math. Soc. \textbf{171} (1972), 235--282. \MR{309764}

\bibitem[CM21]{cameron-mayhew}
Amanda Cameron and Dillon Mayhew, \emph{Excluded minors for the class of split matroids}, Australas. J. Combin. \textbf{79} (2021), 195--204. \MR{4224380}

\bibitem[Der09]{derksen}
Harm Derksen, \emph{Symmetric and quasi-symmetric functions associated to polymatroids}, J. Algebraic Combin. \textbf{30} (2009), no.~1, 43--86. \MR{2519849}

\bibitem[DF10]{derksen-fink}
Harm Derksen and Alex Fink, \emph{Valuative invariants for polymatroids}, Adv. Math. \textbf{225} (2010), no.~4, 1840--1892. \MR{2680193}

\bibitem[Din71]{dinolt}
George~W. Dinolt, \emph{An extremal problem for non-separable matroids}, Th\'{e}orie des matro\"{\i}des ({R}encontre {F}ranco-{B}ritannique, {B}rest, 1970), 1971, pp.~31--49. Lecture Notes in Math. Vol. 211. \MR{0389626}

\bibitem[EHL23]{eur-huh-larson}
Christopher Eur, June Huh, and Matt Larson, \emph{Stellahedral geometry of matroids}, Forum Math. Pi \textbf{11} (2023), Paper No. e24. \MR{4653766}

\bibitem[EMM22]{handbook-Tutte}
Joanna~A. Ellis-Monaghan and Iain Moffatt, \emph{Handbook of the tutte polynomial and related topics}, Chapman and Hall/CRC, 2022.

\bibitem[FS23]{ferroni-schroter-mw}
Luis Ferroni and Benjamin Schr\"{o}ter, \emph{The {M}erino--{W}elsh {C}onjecture for {S}plit {M}atroids}, Ann. Comb. \textbf{27} (2023), no.~3, 737--748. \MR{4633763}

\bibitem[FS24]{ferroni-schroter}
Luis Ferroni and Benjamin Schr\"oter, \emph{Valuative invariants for large classes of matroids}, J. Lond. Math. Soc. (2) \textbf{110} (2024), no.~3, e12984. \MR{4795885}

\bibitem[Hun80]{hungerford}
Thomas~W. Hungerford, \emph{Algebra}, Graduate Texts in Mathematics, vol.~73, Springer-Verlag, New York-Berlin, 1980, Reprint of the 1974 original. \MR{600654}

\bibitem[JS17]{joswig-schroter}
Michael Joswig and Benjamin Schr\"{o}ter, \emph{Matroids from hypersimplex splits}, J. Combin. Theory Ser. A \textbf{151} (2017), 254--284. \MR{3663497}

\bibitem[KRS00]{kook-reiner-stanton}
W.~Kook, V.~Reiner, and D.~Stanton, \emph{Combinatorial {L}aplacians of matroid complexes}, J. Amer. Math. Soc. \textbf{13} (2000), no.~1, 129--148. \MR{1697094}

\bibitem[Kun17]{kung-syzygies}
Joseph P.~S. Kung, \emph{Syzygies on {T}utte polynomials of freedom matroids}, Ann. Comb. \textbf{21} (2017), no.~4, 605--628. \MR{3721644}

\bibitem[Law76]{Lawler:1976}
Eugene~L. Lawler, \emph{Combinatorial optimization: networks and matroids}, Holt, Rinehart and Winston, New York-Montreal, Que.-London, 1976. \MR{439106}

\bibitem[MRInRS12]{merino-tutte}
Criel Merino, Marcelino Ram\'{\i}rez-Ib\'{a}\~{n}ez, and Guadalupe Rodr\'{\i}guez-S\'{a}nchez, \emph{The {T}utte polynomial of some matroids}, Int. J. Comb. (2012), Art. ID 430859, 40. \MR{2982814}

\bibitem[OPR82]{OxleyPrendergastRow:1982}
James Oxley, Kevin Prendergast, and Don Row, \emph{Matroids whose ground sets are domains of functions}, J. Austral. Math. Soc. Ser. A \textbf{32} (1982), no.~3, 380--387. \MR{652415}

\bibitem[Orz71]{orzech}
Morris Orzech, \emph{Onto endomorphisms are isomorphisms}, Amer. Math. Monthly \textbf{78} (1971), 357--362. \MR{280475}

\bibitem[Oxl11]{oxley}
James Oxley, \emph{Matroid theory}, second ed., Oxford Graduate Texts in Mathematics, vol.~21, Oxford University Press, Oxford, 2011. \MR{2849819}

\bibitem[Spe08]{speyer-conjecture}
David~E Speyer, \emph{Tropical linear spaces}, SIAM J. Discrete Math. \textbf{22} (2008), no.~4, 1527--1558. \MR{2448909}

\end{thebibliography}

\end{document}